\let\mathcal\mathscr
\newcommand*{\Z}{\ensuremath{\mathbf{Z}}}                        % integers
\newcommand*{\qp}{\ensuremath{\mathbf{Q}_p}}                     % p-adic
\newcommand*{\zp}{\ensuremath{\mathbf{Z}_p}}
\newcommand*{\zpet}{\ensuremath{\mathbf{Z}_p^*}}
\newcommand*{\qpet}{\ensuremath{\mathbf{Q}_p^*}}
\DeclareMathOperator{\p1}{\bold{P}^1}
\newtheoremstyle{theorem}{11pt}{11pt}{\slshape}{}{\bfseries}{.}{.5em}{}
\newtheoremstyle{note}{11pt}{11pt}{}{}{\bfseries}{.}{.5em}{}
\theoremstyle{plain}
  \newtheorem{theorem}{Th\'{e}or\`{e}me}[section]
  \newtheorem{proposition}[theorem]{Proposition}
  \newtheorem{lemma}[theorem]{Lemme}
  \newtheorem{corollary}[theorem]{Corollaire}
\theoremstyle{definition}
  \newtheorem{definition}[theorem]{D\'{e}finition}
\theoremstyle{remark}
  \newtheorem{example}[theorem]{Exemple}
  \newtheorem{remark}[theorem]{Remarque}
\begin{document}

\selectlanguage{french}

\title{Actions infinitésimales dans la correspondance de Langlands
locale $p$-adique}

\author{Gabriel Dospinescu \thanks
  {\footnotesize {C.M.L.S, Ecole Polytechnique,
  gabriel.dospinescu@math.polytechnique.fr}}}

\maketitle

\begin{abstract}
\footnotesize

  Let $V$ be a two-dimensional absolutely irreducible
  $\overline{\qp}$-representation of ${\rm Gal}(\overline{\qp}/\qp)$
  and let $\Pi(V)$ be the ${\rm GL}_2(\qp)$ Banach representation
  associated by Colmez's $p$-adic Langlands correspondence.
  We establish a link between the action of the Lie algebra of ${\rm GL}_2(\qp)$
  on the locally analytic vectors $\Pi(V)^{\rm an}$ of $\Pi(V)$,
  the connection $\nabla$ on the $(\varphi,\Gamma)$-module
  associated to $V$ and the Sen polynomial of $V$.
  This answers a question of Harris, concerning
  the infinitesimal character of $\Pi(V)^{\rm an}$. Using this result,
  we give a new proof of a theorem of Colmez, stating that
$\Pi(V)$ has nonzero locally algebraic vectors if and only if $V$ is potentially semi-stable
  with distinct Hodge-Tate weights.

\end{abstract}

\selectlanguage{french}

\section{Introduction}
\label{intro}

 Cet article s'inscrit dans le cadre de la correspondance de Langlands locale $p$-adique,
 imagin\'{e}e par Breuil~\cite{Br} (au moins dans le cas potentiellement semi-stable) et \'{e}tablie par Colmez~\cite{Cbigone} pour ${\rm GL}_2(\qp)$,
\`{a} la suite des travaux de Colmez~\cite{Cserieunit}, Berger-Breuil~\cite{BB}, Kisin~\cite{KiAst}.
Cette correspondance encode la correspondance de Langlands
 locale classique pour ${\rm GL}_2(\qp)$.

 Soit $L$ une extension finie de $\qp$ et soit $V$ une $L$-repr\'{e}sentation absolument irr\'{e}ductible de ${\rm Gal}(\overline{\qp}/\qp)$,
 de dimension $2$. En utilisant la th\'{e}orie des $(\varphi,\Gamma)$-modules de Fontaine~\cite{FoGrot}, Colmez associe \`{a} $V$
une ${\rm GL}_2(\qp)$-repr\'{e}sentation de Banach $\Pi(V)$, unitaire, admissible, topologiquement irr\'{e}ductible et d\'{e}crit les vecteurs localement analytiques
 $\Pi(V)^{\rm an}$ de $\Pi(V)$ en fonction de $V$. Cependant, la description est tr\`{e}s indirecte, ce qui rend
l'\'etude de
 $\Pi(V)^{\rm an}$ un peu d\'elicate: par exemple, on ne sait pas si $\Pi(V)^{\rm an}$ est toujours de longueur finie.
Dans cet article, on calcule
 l'action infinit\'{e}simale de ${\rm GL}_2(\qp)$ sur $\Pi(V)^{\rm an}$, et on en d\'eduit
une nouvelle d\'emonstration d'un r\'esultat de Colmez caract\'erisant les repr\'esentations $V$
telles que $\Pi(V)$ poss\`ede des vecteurs localement alg\'ebriques (c'est ce r\'esultat qui
permet de faire le pont avec la correspondance classique).
Il y a d'autres probl\`{e}mes auxquels les techniques de cet article s'appliquent, le cas le plus notable \'{e}tant (voir \cite{Cvectan}
et \cite{Jacquet})
 des conjectures de Berger, Breuil~\cite{BB} et Emerton~\cite{EmCoates}, qui d\'{e}crivent de mani\`{e}re pr\'{e}cise l'espace $\Pi(V)^{\rm an}$
 quand $V$ est trianguline.

\subsection{Notations}
 On fixe dans toute la suite un nombre premier $p$, une extension finie $L$ de $\qp$ et une cl\^{o}ture alg\'{e}brique $\overline{\qp}$ de $\qp$, contenant $L$.
  Soit $\chi:{\rm Gal}(\overline{\qp}/\qp)\to \zpet$ le caract\`{e}re cyclotomique; alors $\chi$ induit
un isomorphisme de
$ \Gamma={\rm Gal}({\qp}(\mu_{p^{\infty}})/\qp)$ sur $\zpet$, et on note $a\mapsto\sigma_a$ l'isomorphisme
r\'eciproque de $\zpet$ sur $\Gamma$ (on a donc $\sigma_a(\zeta)=\zeta^a$ pour tout $\zeta\in\mu_{p^{\infty}}$).
Soient $\mathcal{R}$
   l'anneau de Robba (c'est l'anneau des s\'{e}ries de Laurent $\sum_{n\in\mathbf{Z}} a_nT^n$ \`{a} coefficients dans $L$, qui convergent sur une couronne du type
   $0<v_p(T)\leq r$, o\`{u} $r$ d\'{e}pend de la s\'{e}rie), ${\cal E}^\dagger$ le sous-anneau de ${\cal R}$ des \'el\'ements born\'es
(c'est un corps) et ${\cal E}$ le compl\'et\'e de ${\cal E}^\dagger$ pour la valuation $p$-adique.
On munit ces anneaux d'actions continues de $\Gamma$ et d'un Frobenius $\varphi$, commutant entre elles,
en posant $\varphi(T)=(1+T)^p-1$ et $\sigma_a(T)=(1+T)^a-1$ si $a\in\zpet$.

\subsection{Le faisceau $\Delta\to \Delta\boxtimes U$}\label{faisceau}

    Une $L$-repr\'{e}sentation de ${\rm Gal}(\overline{\qp}/\qp)$
   est un $L$-espace vectoriel $V$ de dimension finie, muni d'une action $L$-lin\'{e}aire
   continue de  ${\rm Gal}(\overline{\qp}/\qp)$. Soit $V$ une $L$-repr\'{e}sentation de dimension
   $2$ de ${\rm Gal}(\overline{\qp}/\qp)$, absolument irr\'{e}ductible.
La th\'{e}orie de Fontaine~\cite{FoGrot}, compl\'et\'ee par des travaux de
    Cherbonnier-Colmez~\cite{CCsurconv}, Berger~\cite{Ber} et Kedlaya~\cite{Ked}, associe \`{a} $V$ des modules
   $D=D(V)$, $D^\dagger$, $D_{\rm rig}$, libres de rang $2$ sur $\mathcal{E}$, ${\cal E}^\dagger$ et ${\cal R}$,
munis d'une action semi-lin\'{e}aire de $\Gamma$ et d'un Frobenius $\varphi$ semi-lin\'{e}aire, commutant \`{a} $\Gamma$
(le module $D^\dagger$ n'a pas d'int\'er\^et propre, mais il permet de passer de $D$ \`a $D_{\rm rig}$
et vice-versa).  Enfin, on dispose sur
$\Delta\in\{D,D_{\rm rig}\}$ d'un
inverse \`{a} gauche $\psi$ de $\varphi$ qui commute \`a $\Gamma$.

   On peut utiliser ces op\'{e}rateurs
    pour d\'{e}finir une action\footnote{Voir la remarque \ref{explic} pour l'origine et la signification de ces formules.}
 du mono\"{\i}de $P^+=\left(\begin{smallmatrix} \zp- \{0\} & \zp \\0 & 1\end{smallmatrix}\right)$ sur $\Delta$, en posant
 $$\left(\begin{smallmatrix} p^ka & b \\0 & 1\end{smallmatrix}\right)z=(1+T)^b\cdot \varphi^k(\sigma_a(z))$$ pour $z\in \Delta$ et $k\geq 0$,
$a\in\zpet$, $b\in\zp$.
     Le mono\"{\i}de $P^+$ agit
   aussi sur $\zp$, par $\left(\begin{smallmatrix} a & b \\0 & 1\end{smallmatrix}\right)x=ax+b$. Colmez d\'{e}finit un
    faisceau $U\to \Delta\boxtimes U$ sur $\zp$,
   \'{e}quivariant sous l'action de $P^+$, dont les sections sur $a+p^n\zp$ sont
   $$\Delta \boxtimes (a+p^n\zp)=(1+T)^a\cdot \varphi^n(\Delta)\subset \Delta$$ (en particulier
   $\Delta\boxtimes \zp=\Delta$) et la restriction ${\rm{Res}}_{a+p^n\zp}: \Delta=\Delta\boxtimes \zp\to \Delta\boxtimes (a+p^n\zp)$ est
   $${\rm{Res}}_{a+p^n\zp}=
   \left(\begin{smallmatrix} 1 & a \\0 & 1\end{smallmatrix}\right)\circ \varphi^n
   \circ \psi^n\circ \left(\begin{smallmatrix} 1 & -a \\0 & 1\end{smallmatrix}\right).$$

 Faisons agir ${\rm GL}_2(\qp)$ sur ${\p1}(\qp)$ par $\left(\begin{smallmatrix} a & b \\c & d\end{smallmatrix}\right)z=
 \frac{az+b}{cz+d}$ (la restriction de cette action \`{a} $P^+$ est celle d\'{e}finie ci-dessus). Un des r\'{e}sultats principaux de~\cite{Cbigone} est le fait que
 le faisceau pr\'{e}c\'{e}dent sur $\zp$ s'\'{e}tend en un faisceau sur ${\p1}(\qp)$,
 \'{e}quivariant sous l'action de ${\rm GL}_2(\qp)$ (les formules pour l'action de ${\rm GL}_2(\qp)$ sont nettement
plus compliqu\'ees que celles ci-dessus, mais sont en fait inutiles pour les applications).  Soit
$w=\left(\begin{smallmatrix} 0 & 1 \\1 & 0 \end{smallmatrix}\right)$.
Comme $\p1(\qp)$ est obtenu par recollement \`{a} partir de $\zp$ et
  $w\cdot \zp$ le long de $\zpet$, on peut d\'{e}crire l'espace des sections globales du faisceau
  attach\'{e} \`{a} $\Delta$ par
  $$\Delta\boxtimes\p1=\{(z_1,z_2)\in \Delta\times \Delta| \quad {\rm Res}_{\zpet}(z_2)=
w_D({\rm Res}_{\zpet}(z_1))\},$$
o\`u $w_D$ est la restriction de $w$ \`a $\Delta\boxtimes\zpet$. Soit $\delta_D=\chi^{-1}\cdot \det V$, que l'on voit
comme caract\`{e}re de $\qpet$ par la th\'{e}orie locale du corps de classes\footnote{Donc $\chi$, vu comme caract\`{e}re de $\qpet$ est
$x\to x\cdot |x|_p$, o\`{u} $|\cdot|_p$ est la valeur absolue $p$-adique.}.
La repr\'{e}sentation $\Pi(V)$ attach\'{e}e \`{a} $V$ par la correspondance de Langlands locale $p$-adique
vit alors dans une suite exacte $$0\to \Pi(V)^*\otimes (\delta_D\circ \det)\to D\boxtimes\p1\to \Pi(V)\to 0$$
de ${\rm GL}_2(\qp)$-modules topologiques. Ses vecteurs localement analytiques
vivent dans une suite exacte analogue
      $$0\to (\Pi(V)^{\rm an})^*\otimes (\delta_D\circ \det)\to D_{\rm rig}\boxtimes\p1\to \Pi(V)^{\rm an}\to 0.$$
Par ailleurs, si $U$ est un ouvert compact de $\p1(\qp)$, on dispose d'une application de prolongement par $0$ de $\Delta\boxtimes U$ dans
$\Delta\boxtimes\p1$, ce qui permet en particulier de voir $D_{\rm rig}=D_{\rm rig}\boxtimes \zp$
comme un sous-module de
     $D_{\rm rig}\boxtimes \p1$.

\subsection{Actions infinit\'{e}simales}

  Dans~\cite{Cbigone}, Colmez montre que $D_{\rm rig}\boxtimes\p1$ est un module sous l'action\footnote{Plus pr\'{e}cis\'{e}ment,
  il d\'{e}montre que $D_{\rm rig}\boxtimes \p1$ est un module sous l'action de l'alg\`{e}bre des distributions
  d'un sous-groupe ouvert compact suffisament petit de ${\rm GL}_2(\qp)$.}
     de l'alg\`{e}bre enveloppante $U(\mathfrak{gl_2})$ de ${\rm Lie}({\rm GL}_2(\qp))$, et cette action
stabilise $D_{\rm rig}\boxtimes U$ pour tout ouvert compact $U$ de $\p1(\qp)$;
en particulier, cette action stabilise
     le sous-module $D_{\rm rig}=D_{\rm rig}\boxtimes \zp$. Soit
     $$h=\left(\begin{smallmatrix} 1 & 0 \\0 & -1\end{smallmatrix}\right), \quad
   u^+=\left(\begin{smallmatrix} 0 & 1 \\0 & 0\end{smallmatrix}\right),
\quad u^-=\left(\begin{smallmatrix} 0 & 0 \\1 & 0\end{smallmatrix}\right)$$
     la base usuelle de $\mathfrak{sl_2}$, de sorte que ${\rm I}_2=\big(\begin{smallmatrix}1&0\\0&1\end{smallmatrix}\big)$, $u^+$, $u^-$,
   $h$ forment une base de $\mathfrak{gl_2}$. D'apr\`{e}s Berger~\cite{Ber}, $D_{\rm rig}$
     est aussi muni d'une action infinit\'{e}simale de $\Gamma$, \`{a} travers la connexion
   $$\nabla(z)=\lim_{\gamma\to 1}\frac{\gamma(z)-z}{\chi(\gamma)-1}.$$
  Enfin, rappelons que $t=\log(1+T)\in\mathcal{R}$ et que, gr\^{a}ce aux travaux de Sen~\cite{Sen},
   on peut associer \`{a} $V$ un polyn\^{o}me
   $P_{\rm Sen,V}\in L[X]$ de degr\'{e} $2$, unitaire, dont les racines s'appellent
   les poids de Hodge-Tate g\'{e}n\'{e}ralis\'{e}s de $V$. Si $V$ est Hodge-Tate, il s'agit des poids
   de Hodge-Tate classiques de $V$ (avec la convention que le poids de ${\qp}(1)$ est $1$).

   \begin{theorem}
\label{main1}
     Soit $V$ une $L$-repr\'{e}sentation absolument irr\'{e}ductible
     de ${\rm Gal}(\overline{\qp}/\qp)$, de dimension $2$, \`{a} poids de Hodge-Tate $a,b$, de telle sorte
     que $P_{\rm Sen, V}=(X-a)(X-b)$.
     L'action de $\mathfrak{gl_2}$ sur $D_{\rm rig}$ est donn\'{e}e par ${\rm I}_2 (z)=(a+b-1)z$,
     $$\quad u^-(z)=-\frac{P_{\rm Sen,V}(\nabla)(z)}{t}, \quad h(z)=2\nabla(z)-(a+b-1)z, \quad u^+(z)=tz.$$
   \end{theorem}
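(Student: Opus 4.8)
The plan is to compute the four generators separately, the genuinely hard one being $u^-$. For $u^+$ I would simply differentiate the $P^+$-formula along the unipotent subgroup: since $\left(\begin{smallmatrix}1&s\\0&1\end{smallmatrix}\right)z=(1+T)^s z$, differentiating at $s=0$ gives $u^+(z)=\log(1+T)\cdot z=tz$. Likewise the torus element $\left(\begin{smallmatrix}a&0\\0&1\end{smallmatrix}\right)$ acts by $\sigma_a$, and since $\chi(\sigma_a)=a$ its derivative at $a=1$ is by definition the connection, so $E_{11}:=\left(\begin{smallmatrix}1&0\\0&0\end{smallmatrix}\right)$ acts as $\nabla$. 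For ${\rm I}_2$ I would use that $D_{\rm rig}\boxtimes\mathbf{P}^1$ has central character $\delta_D$ (in the defining exact sequence the subobject is twisted by $\delta_D\circ\det$, which forces the square of the central character of $\Pi(V)$ to equal $(\delta_D\circ\det)|_Z=\delta_D(\cdot)^2$); the weight of $\delta_D=\chi^{-1}\det V$ is $-1+(a+b)$, whence ${\rm I}_2(z)=(a+b-1)z$. Finally $h=E_{11}-E_{22}=2E_{11}-{\rm I}_2=2\nabla-(a+b-1)$, with no further work.

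\textbf{The operator $u^-$: reduction via the $\mathfrak{sl_2}$-relations.} Since the $\mathfrak{gl_2}$-action is a genuine Lie algebra action, $u^-$ must satisfy $[u^+,u^-]=h$ and $[h,u^-]=-2u^-$, i.e. (as operators on $D_{\rm rig}$, using the two formulas just obtained)
$$t\,u^-(z)-u^-(tz)=(2\nabla-(a+b-1))(z),\qquad \nabla\,u^-=u^-(\nabla-1).$$
Working in the algebra generated by multiplication by $t^{\pm1}$ and by $\nabla$ — where $\nabla\,t=t(\nabla+1)$, hence $[\nabla,t^k f(\nabla)]=k\,t^k f(\nabla)$ — the second relation forces every homogeneous component of $u^-$ to have $t$-degree $-1$, so $u^-=\tfrac1t f(\nabla)$ for a single polynomial $f$. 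The first relation then reads $f(\nabla)-f(\nabla+1)=2\nabla-(a+b-1)$, which determines $f(X)=-X^2+(a+b)X+\gamma$ \emph{up to the additive constant} $\gamma$. Thus the relations alone give $u^-(z)=-\tfrac1t(\nabla^2-(a+b)\nabla-\gamma)(z)$ and leave exactly one scalar undetermined.

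\textbf{Fixing the constant by Sen's theorem.} To pin down $\gamma$ I would invoke two facts: $u^-$ stabilises $D_{\rm rig}$, and, by the defining property of the Sen polynomial, $P_{\rm Sen,V}(\nabla)=(\nabla-a)(\nabla-b)$ maps $D_{\rm rig}$ into $t\,D_{\rm rig}$. The second fact gives $(\nabla^2-(a+b)\nabla)(z)\equiv-ab\,z\pmod{t\,D_{\rm rig}}$, whence $t\,u^-(z)=f(\nabla)(z)\equiv(ab+\gamma)z\pmod{t\,D_{\rm rig}}$; since $t\,u^-(z)\in t\,D_{\rm rig}$ and $D_{\rm rig}/t\,D_{\rm rig}\neq0$, we must have $\gamma=-ab$, i.e. $u^-(z)=-P_{\rm Sen,V}(\nabla)(z)/t$, as claimed. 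The same computation shows $P_{\rm Sen,V}(\nabla)(z)/t$ indeed lies in $D_{\rm rig}$, so the formula is well posed, and a bookkeeping check recovers the Casimir as the scalar $\tfrac12((a-b)^2-1)$, which is the infinitesimal character of Harris's question.

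\textbf{Main obstacle.} The delicate point is the input underlying the reduction in the second step: one must show that the restriction of $u^-$ to $D_{\rm rig}=D_{\rm rig}\boxtimes\zp$ actually lies in the algebra generated by $t^{\pm1}$ and $\nabla$. A priori $u^-=\mathrm{Ad}(w)(u^+)$ is defined through the complicated ${\rm GL}_2(\qp)$-action on $D_{\rm rig}\boxtimes\mathbf{P}^1$ and the involution $w$, which does \emph{not} preserve the chart $\zp$. I expect this to require an explicit computation of the action of the lower unipotent $\left(\begin{smallmatrix}1&0\\pc&1\end{smallmatrix}\right)$ on $D_{\rm rig}\boxtimes\zp$ in terms of $\varphi$, $\psi$, $\sigma_a$ and $w_D$, followed by differentiation at $c=0$, the leading term being visibly a first-order expression in $\nabla$ over $t$. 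This computation, together with Sen's theorem above, is where the real content lies.
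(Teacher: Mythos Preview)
Your treatment of $I_2$, $u^+$, $h$ matches the paper's (its Proposition~\ref{trivial}), and your use of Sen's theorem to pin down the last constant is exactly the paper's mechanism. The divergence is entirely in how you reach the point where only a constant remains.

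You correctly flag the obstacle: nothing a priori places the restriction of $u^-$ to $D_{\rm rig}$ inside the algebra generated by $t^{\pm1}$ and $\nabla$, so your reduction via $[h,u^-]=-2u^-$ to the ansatz $u^-=\tfrac1t f(\nabla)$ is unjustified. Your proposed fix---differentiate the explicit lower-unipotent formula---is unlikely to close the gap cleanly: that formula (recorded in \S\ref{involution}) is expressed through the involution $w_D$, which itself lies outside the $(t,\nabla)$-algebra, and the identity one would need on $w_D$ to simplify the result, namely $w_D(tz)=-t^{-1}P_{\rm Sen,V}(\nabla)(w_D(z))$, is in the paper a \emph{consequence} of the theorem (Corollary~\ref{strange}), not an input.

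The paper sidesteps the obstacle by working with the Casimir $C=2u^+u^-+\tfrac12h^2-h$ rather than with $u^-$ directly. Because $C$ is central in $U(\mathfrak{gl}_2)$, it commutes with the entire ${\rm GL}_2(\qp)$-action on $D_{\rm rig}\boxtimes\mathbf{P}^1$, hence in particular with $\varphi$, with $\Gamma$, and with multiplication by $(1+T)^m$ for all $m$; density of $L(T)$ in $\mathcal{R}$ then makes $C$ an $\mathcal{R}$-linear $(\varphi,\Gamma)$-endomorphism of $D_{\rm rig}$, and absolute irreducibility of $V$ (via full faithfulness of $V\mapsto D_{\rm rig}$) forces it to be a scalar. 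Substituting the known formulas gives $C=2tu^-+2P_{\rm Sen,V}(\nabla)+\tfrac12((a-b)^2-1)$ as operators on $D_{\rm rig}$; since both $tu^-$ and $P_{\rm Sen,V}(\nabla)$ land in $tD_{\rm rig}$ (the latter by Proposition~\ref{HC}), the scalar must be $\tfrac12((a-b)^2-1)$, and the formula for $u^-$ drops out. The point is that centrality of $C$ buys $\mathcal{R}$-linearity for free, whereas $u^-$ by itself enjoys no such commutation, and there is no evident shortcut to placing it in your algebra.
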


    On d\'{e}duit du th\'{e}or\`{e}me~\ref{main1} l'action de $U(\mathfrak{gl_2})$ sur
    $D_{\rm rig}\boxtimes \p1$, car un \'{e}l\'{e}ment de $D_{\rm rig}\boxtimes \p1$
    est de la forme $z_1+w\cdot z_2$ avec $z_1,z_2\in D_{\rm rig}$. La preuve du th\'{e}or\`{e}me \ref{main1} fait jouer un
r\^ole essentiel \`a l'\'{e}l\'{e}ment de Casimir $$C=u^+u^-+u^-u^++\frac{1}{2}h^2\in U(\mathfrak{sl_2}),$$ qui engendre le centre
de l'alg\`{e}bre ${\rm U}(\mathfrak{sl_2})$. En effet, il n'est pas difficile de v\'{e}rifier que $C$ induit
    un endomorphisme du $(\varphi,\Gamma)$-module $D_{\rm rig}$. Des propri\'{e}t\'{e}s
    standard des $(\varphi,\Gamma)$-modules entra\^{i}nent que cet endomorphisme est scalaire. Pour identifier ce scalaire,
on utilise la th\'{e}orie de Hodge $p$-adique, en particulier les techniques diff\'{e}rentielles de Berger et
Fontaine (voir la partie \ref{FontaineSen}). Le th\'{e}or\`{e}me~\ref{main1} est une cons\'{e}quence facile du r\'{e}sultat suivant, qui
r\'{e}pond aussi \`{a} une question de Harris~(\cite{EmCoates}, remark 3.3.8).

 \begin{theorem}
 \label{infchar}
  Si $V$ est comme dans le th\'{e}or\`{e}me \ref{main1}, l'action de $C$ sur
  $D_{\rm rig}\boxtimes \p1$, et donc sur $\Pi(V)^{\rm an}$,
  est la multiplication par $\frac{(b-a)^2-1}{2}$.
 \end{theorem}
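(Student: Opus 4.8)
The plan is to reduce everything to the submodule $D_{\rm rig}=D_{\rm rig}\boxtimes\zp$ and to show that $C$ acts there by the scalar $\frac{(b-a)^2-1}{2}$. This reduction is legitimate because $C$ is invariant under $\mathrm{Ad}(w)$: this automorphism exchanges $u^+$ and $u^-$ and sends $h$ to $-h$, so it fixes $C=u^+u^-+u^-u^++\frac12 h^2$, and consequently $C$ commutes with the operator $w$ on $D_{\rm rig}\boxtimes\p1$. Since every element of $D_{\rm rig}\boxtimes\p1$ is of the form $z_1+w\cdot z_2$ with $z_1,z_2\in D_{\rm rig}$, once $C$ is known to be multiplication by $\lambda$ on $D_{\rm rig}$ it is multiplication by $\lambda$ on all of $D_{\rm rig}\boxtimes\p1$, and hence on the quotient $\Pi(V)^{\rm an}$.

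First I would check that $C$ defines an endomorphism of the $(\varphi,\Gamma)$-module $D_{\rm rig}$, as asserted in the introduction. Being central in $U(\mathfrak{sl_2})$ (hence in $U(\mathfrak{gl_2})$), $C$ is fixed by the adjoint action of $\mathrm{GL}_2(\qp)$, so it commutes with every group element that acts on $D_{\rm rig}$: with $\varphi$ coming from $\left(\begin{smallmatrix}p&0\\0&1\end{smallmatrix}\right)$, with $\Gamma$ coming from $\left(\begin{smallmatrix}a&0\\0&1\end{smallmatrix}\right)$, and with multiplication by $(1+T)^b$ coming from $\left(\begin{smallmatrix}1&b\\0&1\end{smallmatrix}\right)$, $b\in\zp$. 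As the $(1+T)^b$ span a dense subring of $\mathcal{R}$ and $T$ is invertible in $\mathcal{R}$, this forces $C$ to be $\mathcal{R}$-linear, so $C\in\mathrm{End}_{(\varphi,\Gamma)}(D_{\rm rig})$. Because $V$ is absolutely irreducible and $V\mapsto D_{\rm rig}$ is fully faithful, $\mathrm{End}_{(\varphi,\Gamma)}(D_{\rm rig})=L$, whence $C$ acts by a scalar $\lambda\in L$; the whole problem is now to compute $\lambda$.

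To pin down $\lambda$ I would use only the part of the infinitesimal action that is visible on $P^+$, together with the fact (already available) that $u^-$ preserves $D_{\rm rig}$. Differentiating the explicit $P^+$-formulas gives $u^+(z)=tz$ and $h(z)=2\nabla(z)-(a+b-1)z$, the constant $a+b-1$ being the weight of the central character $\delta_D=\chi^{-1}\cdot\det V$. Rewriting $C=2u^+u^--h+\frac12 h^2$ by means of $[u^+,u^-]=h$, the equality $C=\lambda$ becomes
$$2t\cdot u^-(z)=\Big(\lambda+(2\nabla-(a+b-1))-\tfrac12(2\nabla-(a+b-1))^2\Big)(z)=:G(\nabla)(z),$$
with $G\in L[X]$ of degree $2$ and leading coefficient $-2$. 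Since $u^-(z)\in D_{\rm rig}$, the right-hand side $G(\nabla)(z)$ lies in $tD_{\rm rig}$ for every $z\in D_{\rm rig}$.

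The hard part, and the only genuinely nontrivial input, is to turn this divisibility by $t$ into a condition on the Hodge-Tate weights. Here I would invoke the differential theory of Berger and Fontaine: $\nabla$ extends to $D_{\rm dif}^+$, and the operator it induces on $D_{\rm dif}^+/tD_{\rm dif}^+$ is the Sen operator, with characteristic polynomial $P_{\rm Sen,V}=(X-a)(X-b)$. Localizing and completing $D_{\rm rig}$ at $t=0$ and reducing the relation $G(\nabla)(z)\in tD_{\rm rig}$ modulo $t$, one obtains that $G$ annihilates the Sen operator; as its eigenvalues are $a$ and $b$, this forces $G(a)=G(b)=0$. These two equations are compatible (their difference vanishes identically in $a,b$) and a short computation gives $\lambda=\frac{(b-a)^2-1}{2}$, completing the proof. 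As a by-product $G=-2P_{\rm Sen,V}$, i.e. $u^-(z)=-P_{\rm Sen,V}(\nabla)(z)/t$, which recovers the remaining formula of Theorem~\ref{main1}. The delicate step is precisely the passage from $t$-divisibility inside $D_{\rm rig}$ to a statement about the Sen operator, which is where the decompletion techniques of Sen and Berger's comparison of $\nabla$ with the Sen operator are essential.
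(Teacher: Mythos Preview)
Your proof is correct and follows essentially the same approach as the paper: show $C$ is a scalar on $D_{\rm rig}$ via $\mathcal{R}$-linearity and absolute irreducibility, rewrite $C=2tu^-+\tfrac12 h^2-h$ using the explicit formulas for $u^+$ and $h$, and then use the Sen/Berger localization (Cayley--Hamilton on $D^+_{{\rm dif},n}/tD^+_{{\rm dif},n}$) to identify the scalar. The only cosmetic difference is that the paper first recognizes the algebraic identity $\tfrac12 h^2-h=2P_{\rm Sen,V}(\nabla)+\tfrac{(a-b)^2-1}{2}$ and then invokes $P_{\rm Sen,V}(\nabla)(D_{\rm rig})\subset tD_{\rm rig}$, whereas you keep $\lambda$ as an unknown and solve $G(a)=G(b)=0$ at the end; these are the same computation read in opposite directions.
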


    En fait, les deux th\'{e}or\`{e}mes pr\'{e}c\'{e}dents sont \'{e}quivalents \`{a} l'\'{e}nonc\'{e} suivant, portant
sur l'involution $w_D$ de Colmez.

    \begin{corollary}
    \label{conseq1}
      Si $z\in D_{\rm rig}\boxtimes\zpet$, on a $$w_D(tz)=-\frac{P_{\rm Sen,V}(\nabla)(w_D(z))}{t}.$$
    \end{corollary}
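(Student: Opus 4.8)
The plan is to deduce the corollary from Theorem~\ref{main1}, using the $\mathrm{GL}_2(\qp)$-equivariance of Colmez's sheaf together with the behaviour of $w$ under the adjoint action; this is one direction of the equivalence asserted above, and the argument runs backwards to recover the $u^{\pm}$-formulas. First I would record the elementary identity $\mathrm{Ad}(w)(u^+)=u^-$ in $\mathfrak{gl_2}$: since $w^{-1}=w$, a direct computation gives
$$w u^+ w^{-1}=\left(\begin{smallmatrix} 0 & 1 \\ 1 & 0 \end{smallmatrix}\right)\left(\begin{smallmatrix} 0 & 1 \\ 0 & 0 \end{smallmatrix}\right)\left(\begin{smallmatrix} 0 & 1 \\ 1 & 0 \end{smallmatrix}\right)=\left(\begin{smallmatrix} 0 & 0 \\ 1 & 0 \end{smallmatrix}\right)=u^-.$$

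The key step is then the intertwining relation $w\circ u^+=u^-\circ w$ as operators on $D_{\rm rig}\boxtimes\p1$. Because $D_{\rm rig}\boxtimes\p1$ is $\mathrm{GL}_2(\qp)$-equivariant and simultaneously carries the infinitesimal action of $\mathfrak{gl_2}$ coming from the distribution algebra of a small compact open subgroup, the group action and the Lie algebra action satisfy the standard compatibility $g\circ X\circ g^{-1}=\mathrm{Ad}(g)(X)$ for $g\in\mathrm{GL}_2(\qp)$ and $X\in\mathfrak{gl_2}$. Taking $g=w$, $X=u^+$ and using the previous paragraph, $w\circ u^+\circ w=u^-$, hence $w\circ u^+=u^-\circ w$.

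Now I would restrict this identity to $D_{\rm rig}\boxtimes\zpet$. The reason the corollary is stated on $\zpet$ rather than on $\zp$ is that $w$ acts on $\p1(\qp)$ by the inversion $x\mapsto 1/x$, which preserves $\zpet$; thus $w$ stabilises $D_{\rm rig}\boxtimes\zpet$ and restricts there to Colmez's involution $w_D$, while the $\mathfrak{gl_2}$-action stabilises $D_{\rm rig}\boxtimes U$ for every compact open $U$, in particular for $U=\zpet$. For $z\in D_{\rm rig}\boxtimes\zpet\subset D_{\rm rig}=D_{\rm rig}\boxtimes\zp$ the operators $u^{\pm}$ are given by the formulas of Theorem~\ref{main1}, namely $u^+(z)=tz$ and $u^-(y)=-P_{\rm Sen,V}(\nabla)(y)/t$; here one checks that multiplication by $t$ commutes with $\varphi\psi={\rm Res}_{p\zp}$ (because $\varphi(t/p)=t$), so it preserves $D_{\rm rig}\boxtimes\zpet=(1-\varphi\psi)D_{\rm rig}$ and the two descriptions are consistent. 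Substituting into $w\circ u^+=u^-\circ w$ gives, for $z\in D_{\rm rig}\boxtimes\zpet$,
$$w_D(tz)=w_D(u^+(z))=u^-(w_D(z))=-\frac{P_{\rm Sen,V}(\nabla)(w_D(z))}{t},$$
which is precisely the asserted identity.

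I expect the main obstacle to be the rigorous justification of the compatibility $g\circ X\circ g^{-1}=\mathrm{Ad}(g)(X)$ for the element $w$, since the infinitesimal action is a priori defined only through the distribution algebra of a fixed small compact open subgroup $H$, whereas $w\notin H$. The delicate point is to transport the action under conjugation: one must know that $D_{\rm rig}\boxtimes\p1$ is a module over the distributions of both $H$ and $wHw^{-1}$ and that the two infinitesimal actions are identified through $\mathrm{Ad}(w)$. This is a consequence of the full $\mathrm{GL}_2(\qp)$-equivariance of the sheaf, but it is the step demanding care, rather than the formal manipulations above.
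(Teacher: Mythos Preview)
Your proposal is correct and follows essentially the same approach as the paper: the paper's proof is the one-line chain
$$w_D(tz)=w_D(u^+z)=wu^+z=u^-wz=u^-(w_D(z))=-\frac{P_{\rm Sen,V}(\nabla)(w_D(z))}{t},$$
which is exactly your argument via $\mathrm{Ad}(w)(u^+)=u^-$ and the formulas of Theorem~\ref{main1}. Your additional remarks (the explicit check that multiplication by $t$ preserves $D_{\rm rig}\boxtimes\zpet$, and the discussion of the compatibility $g\circ X\circ g^{-1}=\mathrm{Ad}(g)(X)$ for $g=w$) are useful sanity checks that the paper leaves implicit, but the core reasoning is identical.
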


Si $a,b\in L$, l'espace des repr\'esentations $V$ dont les poids de Hodge-Tate
g\'en\'eralis\'es sont $a,b$ est une vari\'et\'e de dimension~$3$.
   Les repr\'{e}sentations de Banach attach\'{e}es \`{a} ces repr\'{e}sentations
   galoisiennes ont le m\^{e}me caract\`{e}re infinit\'{e}simal d'apr\`{e}s le th\'{e}or\`{e}me \ref{infchar}, mais elles sont deux \`{a} deux
   non isomorphes (la correspondance de Langlands locale
    $p$-adique \'{e}tant injective, d'apr\`{e}s un th\'{e}or\`{e}me de Colmez). On voit ainsi qu'il y a une infinit\'{e} de repr\'{e}sentations
   de Banach de ${\rm GL}_2(\qp)$, absolument irr\'{e}ductibles, unitaires, admissibles,
   ayant le m\^{e}me caract\`{e}re infinit\'{e}simal. Cela ne se produit
   pas dans la th\'{e}orie classique des repr\'{e}sentations unitaires
   des groupes r\'{e}els (semi-simples, mais oublions cela
   pour un moment), d'apr\`{e}s des r\'{e}sultats classiques (mais profonds) de Harish-Chandra.

    Le th\'{e}or\`{e}me \ref{infchar} coupl\'{e} \`{a} un th\'{e}or\`{e}me r\'{e}cent et tr\`{e}s d\'{e}licat
    de Pa\v{s}k\={u}nas~\cite{Pa}, qui d\'{e}montre la surjectivit\'{e} de la
    correspondance $V\to \Pi(V)$, donne le r\'{e}sultat suivant:

 \begin{theorem}
 \label{conseq2}
  Soit $p>3$ et soit $\Pi$ une $L$-repr\'{e}sentation
  de Banach de ${\rm GL}_2(\qp)$, unitaire, admissible, absolument irr\'{e}ductible.
Alors $\Pi^{\rm an}$ admet un caract\`{e}re infinit\'{e}simal.
 \end{theorem}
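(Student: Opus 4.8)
The plan is to reduce the existence of an infinitesimal character to the single assertion that the Casimir $C$ acts by a scalar on $\Pi^{\rm an}$, and then to obtain this from Theorem~\ref{infchar} together with Pa\v{s}k\={u}nas's classification. Since $\mathfrak{gl_2}=\mathfrak{sl_2}\oplus L\cdot{\rm I}_2$ with ${\rm I}_2$ spanning the centre of $\mathfrak{gl_2}$, one has $U(\mathfrak{gl_2})\simeq U(\mathfrak{sl_2})\otimes_L L[{\rm I}_2]$, and as $Z(U(\mathfrak{sl_2}))=L[C]$ the centre of the enveloping algebra is the polynomial ring $Z(U(\mathfrak{gl_2}))=L[C,{\rm I}_2]$. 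Hence $\Pi^{\rm an}$ (which is nonzero and dense in $\Pi$, $\Pi$ being admissible unitary) admits an infinitesimal character exactly when both $C$ and ${\rm I}_2$ act by scalars. The operator ${\rm I}_2$ is automatic: being absolutely irreducible, admissible and unitary, $\Pi$ has a central character $\omega\colon \qp^{*}\to L^{*}$ by Schur's lemma, and differentiating $\omega$ along ${\rm Lie}(Z({\rm GL}_2(\qp)))=L\cdot{\rm I}_2$ shows that ${\rm I}_2$ acts on $\Pi^{\rm an}$ through the scalar $d\omega$. Everything therefore comes down to showing that $C$ acts by a scalar.

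Next I would use that $C\in U(\mathfrak{sl_2})$ and that $\mathfrak{sl_2}=[\mathfrak{gl_2},\mathfrak{gl_2}]$: any character $\eta\circ\det$ of ${\rm GL}_2(\qp)$ differentiates to a linear form vanishing on $\mathfrak{sl_2}$, so the action of $C$ on locally analytic vectors is unchanged when $\Pi$ is replaced by a twist $\Pi\otimes(\eta\circ\det)$. By Pa\v{s}k\={u}nas's surjectivity theorem \cite{Pa} (this is where $p>3$ enters), up to such a twist $\Pi$ is either $\Pi(V)$ for a two-dimensional absolutely irreducible $V$, or one of the remaining, explicitly described absolutely irreducible unitary objects: characters $\eta\circ\det$, irreducible unitary principal series, and special (Steinberg-type) representations. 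In the supersingular case Theorem~\ref{infchar} gives directly that $C$ acts on $\Pi(V)^{\rm an}$ by $\frac{(b-a)^2-1}{2}$, and by the twist-invariance above the same scalar governs every twist of $\Pi(V)$; this settles the only case requiring the new input of this paper.

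It remains to treat the non-supersingular cases, and I expect this to be the main (though essentially classical) obstacle, resting entirely on Pa\v{s}k\={u}nas's delicate classification to ensure that no further absolutely irreducible unitary Banach representations occur. For a character $\eta\circ\det$ the space $\Pi^{\rm an}$ is one-dimensional and $\mathfrak{sl_2}$ acts by $0$, so $C$ acts by $0$. For the irreducible unitary principal series and the special series, $\Pi^{\rm an}$ is an explicit locally analytic induction, respectively a subquotient of one, on which $C$ acts by the scalar attached to the inducing characters by the standard computation of the Casimir on locally analytic principal series; concretely this again recovers a value of the form $\frac{(b-a)^2-1}{2}$ with $a,b$ the corresponding Hodge--Tate--Sen weights. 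In every case $C$ acts by a scalar, so $Z(U(\mathfrak{gl_2}))=L[C,{\rm I}_2]$ acts by scalars and $\Pi^{\rm an}$ admits an infinitesimal character, completing the proof.
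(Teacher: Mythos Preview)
Your proof is correct and follows essentially the same strategy as the paper: invoke Pa\v{s}k\={u}nas's classification (this is where $p>3$ is used) to reduce to the case $\Pi\simeq\Pi(V)$, where Theorem~\ref{infchar} applies, together with the case of subquotients of unitary parabolic inductions, which the paper dismisses as ``facile'' and which you spell out via the standard Casimir computation on locally analytic principal series. Your additional remarks on $Z(U(\mathfrak{gl_2}))=L[C,{\rm I}_2]$ and the twist-invariance of $C$ are correct elaborations of what the paper leaves implicit; note only that the existence of a central character (your Schur's lemma step) is itself a nontrivial input, also due to Pa\v{s}k\={u}nas as the paper observes.
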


  L'hypoth\`{e}se $p>3$ appara\^{i}t dans les travaux de Pa\v{s}k\={u}nas; il est raisonnable de penser que le r\'{e}sultat
reste vrai pour $p\leq 3$
(cf.~\cite{DfoncteurColmez} pour des r\'esultats dans cette direction).
 Pa\v{s}k\={u}nas d\'{e}montre\footnote{Sa preuve est tr\`{e}s d\'{e}tourn\'{e}e, mais une preuve plus simple d'un r\'{e}sultat
plus g\'{e}n\'{e}ral est donn\'{e}e dans~\cite{DBenjamin}. Elle repose sur une version du lemme de Quillen due \`{a}
Ardakov et Wadsley~\cite{AW}.} en outre l'analogue du lemme de Schur: si $\Pi$ est comme dans
   le th\'{e}or\`{e}me \ref{conseq2}, alors
   ${\rm End}_{L[\rm GL_2(\qp)]}^{\rm cont}(\Pi)=L$. On peut se demander si le m\^{e}me r\'{e}sultat
   est valable en rempla\c{c}ant $\Pi$ par $\Pi^{\rm an}$. Cela d\'{e}montrerait le th\'{e}or\`{e}me
   \ref{conseq2} de mani\`{e}re plus directe (mais pas
   le th\'{e}or\`{e}me \ref{infchar}), mais je ne sais pas le faire.
   Je ne sais pas non plus si $\Pi^{\rm an}$ est de longueur finie pour une
   repr\'{e}sentation $\Pi$ comme dans le th\'{e}or\`{e}me \ref{conseq2}, m\^{e}me s'il est tr\`{e}s probable que
   la r\'{e}ponse soit positive\footnote{ Voir aussi la conjecture faite dans~\cite{DBenjamin} et ce que ce l'on sait
pour l'instant concernant cette question. Le cas o\`{u} $V$ est trianguline est d\'{e}montr\'{e} dans~\cite{Cvectan}, o\`{u}
   $\Pi(V)^{\rm an}$ est \'{e}tudi\'{e} en d\'{e}tail, r\'{e}pondant ainsi \`{a} des conjectures de Berger, Breuil et Emerton.
 L'ingr\'{e}dient principal de la preuve est l'\'{e}tude du module de Jacquet analytique de $\Pi(V)^{\rm an}$, qui peut se
faire facilement \cite{Jacquet} en utilisant les r\'{e}sultats de cet article. }. Notons que dans la th\'{e}orie des groupes r\'{e}els,
   ce r\'{e}sultat est une cons\'{e}quence formelle du fait qu'il y a seulement un nombre fini de (classes de) repr\'{e}sentations
 unitaires irr\'{e}ductibles ayant un caract\`{e}re infinit\'{e}simal donn\'{e}, mais comme on l'a remarqu\'{e} ces r\'{e}sultats
 ne sont plus valables en $p$-adique.

     La motivation principale du th\'{e}or\`{e}me \ref{main1} \'{e}tait de donner une preuve
     "analytique" du th\'{e}or\`{e}me suivant, d\^{u} \`{a} Colmez~(\cite{Cbigone}, thm. VI.6.13, VI.6.18).

   \begin{theorem}
   \label{main2}
     Si $V$ est comme dans le th\'{e}or\`{e}me \ref{main1}, alors $\Pi(V)$ a des vecteurs
     localement alg\'{e}briques non nuls si et seulement si $V$ est potentiellement
     semi-stable \`{a} poids de Hodge-Tate distincts.
   \end{theorem}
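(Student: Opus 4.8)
The plan is to reformulate the existence of nonzero locally algebraic vectors as an infinitesimal condition and then read it off from Theorem~\ref{main1}. The locally algebraic vectors of $\Pi(V)$ are automatically locally analytic, so they coincide with those of $\Pi(V)^{\rm an}$, and a locally analytic vector is locally algebraic exactly when it generates, under $U(\mathfrak{gl}_2)$, a finite-dimensional subrepresentation on which $\mathfrak{sl}_2$ acts through an algebraic representation $\mathrm{Sym}^n$ and the centre through the central character. Since the sequence $0\to(\Pi(V)^{\rm an})^*\otimes(\delta_D\circ\det)\to D_{\rm rig}\boxtimes\mathbf{P}^1\to\Pi(V)^{\rm an}\to 0$ is $U(\mathfrak{gl}_2)$-equivariant, I would work inside $D_{\rm rig}\boxtimes\mathbf{P}^1$, using the formulas $u^+(z)=tz$, $u^-(z)=-P_{\rm Sen,V}(\nabla)(z)/t$, $h(z)=2\nabla(z)-(a+b-1)z$ on the chart $D_{\rm rig}=D_{\rm rig}\boxtimes\zp$, together with their $w_D$-conjugates on the opposite chart.

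First I would dispose of the easy half of necessity. If $v$ is a nonzero locally algebraic vector it generates a copy of some $\mathrm{Sym}^n$, on which the Casimir $C$ acts by $\frac{(n+1)^2-1}{2}$. By Theorem~\ref{infchar}, $C$ acts on all of $\Pi(V)^{\rm an}$ by $\frac{(b-a)^2-1}{2}$, whence $(n+1)^2=(b-a)^2$ with $n\geq 0$; thus $b-a$ is a nonzero integer and the Hodge--Tate weights of $V$ are distinct (their individual integrality will follow once $V$ is shown to be de Rham). Relabelling so that $k:=b-a\geq 1$, the relevant algebraic representation is $W=\mathrm{Sym}^{k-1}$, suitably twisted so that $\mathrm{I}_2$ acts by $a+b-1$.

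The heart of the matter is to identify, through $u^-(z)=-P_{\rm Sen,V}(\nabla)(z)/t$, the lowest weight vectors of such a copy of $W$ with solutions of $P_{\rm Sen,V}(\nabla)(z)=(\nabla-a)(\nabla-b)(z)=0$: a lowest weight vector on the chart $D_{\rm rig}$ is killed by $u^-$, hence lies in the $\nabla=a$ eigenspace of $\ker P_{\rm Sen,V}(\nabla)$. Conversely, from such an eigenvector $z$ one raises by $u^+=t$, obtaining $z,tz,\dots,t^{k-1}z$ of $\nabla$-eigenvalues $a,a+1,\dots,b-1$ and $h$-weights $-(k-1),\dots,(k-1)$; gluing the $0$-chart and the $\infty$-chart along $\zpet$ by means of Corollary~\ref{conseq1} (which expresses the conjugation $wu^+w^{-1}=u^-$) closes this up to a full copy of $\mathrm{Sym}^{k-1}$ in $D_{\rm rig}\boxtimes\mathbf{P}^1$ whose image in $\Pi(V)^{\rm an}$ is the desired locally algebraic subrepresentation. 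The remaining input is pure $p$-adic Hodge theory: by Berger's dictionary relating $D_{\rm rig}$ to the connection $\nabla$, the $\nabla$-eigenvectors in $D_{\rm rig}[1/t]$ with eigenvalues the Hodge--Tate weights are exactly the periods computing $D_{\rm dR}(V)$, and $\ker P_{\rm Sen,V}(\nabla)$ attains maximal rank precisely when $V$ is de Rham; for a two-dimensional representation the $p$-adic monodromy theorem makes de Rham equivalent to potentially semi-stable, yielding the stated equivalence.

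I expect the main obstacle to lie in the bookkeeping on $\mathbf{P}^1$, and specifically in controlling $t$-divisibility: one must arrange that $z,tz,\dots,t^{k-1}z$ remain nonzero in the quotient $\Pi(V)^{\rm an}$ while the next power $t^k z$ (of $\nabla$-eigenvalue $b$, $h$-weight $k+1$) dies there, i.e. falls into the subobject $(\Pi(V)^{\rm an})^*\otimes(\delta_D\circ\det)$, so that $u^+$ indeed annihilates the highest weight line. It is exactly this prescribed behaviour of the powers of $t$---together with the question of whether the relevant eigenvector lives in $D_{\rm rig}$ itself or only in $D_{\rm rig}[1/t]$---that encodes the de Rham condition, and conversely a single locally algebraic vector must be shown to force $\ker P_{\rm Sen,V}(\nabla)$ to have full rank rather than to contain merely one Hodge--Tate period. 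Managing this requires the structure theory of de Rham $(\varphi,\Gamma)$-modules (the saturation $N_{\rm dR}$ and its link with $D_{\rm pst}$) and a careful use of Corollary~\ref{conseq1} to transport the divisibility condition across the $w_D$-gluing; once this is in place, identifying the smooth factor with the representation attached to $D_{\rm pst}(V)$ by classical local Langlands is a formal consequence, though it is not needed for the equivalence itself.
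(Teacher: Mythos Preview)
Your Casimir argument forcing $b-a\in\mathbf{Z}\setminus\{0\}$ is correct and is exactly the paper's Proposition~\ref{HTdif}. The remainder, however, has a genuine gap, located precisely where you yourself flag the ``main obstacle''.

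First, a locally algebraic vector lives in the \emph{quotient} $\Pi(V)^{\rm an}$ of $D_{\rm rig}\boxtimes\p1$. If $u^-$ kills such a vector, all you learn about a lift $z\in D_{\rm rig}\boxtimes\p1$ is that $u^-(z)$ lands in the subobject $(\Pi(V)^{\rm an})^*\otimes(\delta_D\circ\det)$, not that $P_{\rm Sen,V}(\nabla)(z)=0$ in $D_{\rm rig}$; and there is no reason a lift should lie in the chart $D_{\rm rig}\boxtimes\zp$ to begin with. Second, and more seriously, your claim that ``the $\nabla$-eigenvectors in $D_{\rm rig}[1/t]$ \dots\ are exactly the periods computing $D_{\rm dR}(V)$'' is wrong: $\nabla$-eigenvectors in $D_{\rm rig}[1/t]$ detect \emph{crystalline} (or, with $\log$, semistable) periods, not de Rham ones. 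A two-dimensional $V$ that becomes crystalline only over an extension of $\qp$ not contained in the cyclotomic tower is de Rham, yet has no nonzero $\nabla$-eigenvector in $D_{\rm rig}[1/t]$; your construction then produces nothing. The de Rham condition is visible only after localizing to the modules $D_{{\rm dif},n}^+$ over $L_n[[t]]$, where the connection is regular and the Sen operator on the fibre at $t=0$ can be analyzed.

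The paper therefore takes a different route. It introduces a Kirillov model (Chapter~\ref{Kirillovetdualite}) that realizes an explicit subspace $\Pi_c^{P-\rm alg}\subset\Pi$ as functions $\qpet\to N_{{\rm dif},n}/D_{{\rm dif},n}^+$, where $N_{{\rm dif},n}\subset D_{{\rm dif},n}^+[1/t]$ is a lattice adapted to the Sen operator. The key criterion (Theorem~\ref{crucialalg}) is that $V$ is de Rham if and only if $(u^-)^k$ annihilates $\Pi_c^{P-\rm alg}$; this is proved by expressing $(u^-)^k$ via Theorem~\ref{main1} as the differential operator $(-1)^k\nabla_{2k}/t^k$ on $D_{\rm rig}$ (Lemma~\ref{long}), transporting it to $D_{{\rm dif},n}^+$ through the duality pairing of Proposition~\ref{deep}, and then computing directly in $L_n[[t]]$-coordinates (Lemma~\ref{diff}). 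The locally algebraic vector in the de Rham case is constructed in the Kirillov model from a basis of $D_{\rm dR}(V)$, not in $D_{\rm rig}$.
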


   Notons que le th\'{e}or\`{e}me \ref{main2} joue un r\^{o}le crucial dans la preuve
   de la conjecture de Fontaine-Mazur en dimension $2$ (sous
   certaines hypoth\`{e}ses faibles) par Emerton~\cite{Emcomp}.
    Il joue aussi un r\^{o}le important dans la preuve par
      Kisin~\cite{KiFM} de la m\^{e}me conjecture (mais Kisin a besoin d'informations plus fines
      concernant les vecteurs localement alg\'{e}briques).

Notre d\'emonstration du
th\'{e}or\`{e}me \ref{main2} est nettement plus directe que celle que
donne Colmez dans~\cite{Cbigone}, qui repose
sur deux g\'{e}n\'{e}ralisations des lois de r\'{e}ciprocit\'{e} explicites de Kato~\cite{Kato} et Perrin-Riou~\cite{Perrin},~\cite{Cannals}.
     Il est assez amusant de constater que m\^{e}me si on utilise beaucoup
      des r\'{e}sultats de Colmez, ce sont pr\'{e}cis\'{e}ment ces r\'{e}sultats qui
      ne sont pas utilis\'{e}s dans sa preuve du th\'{e}or\`{e}me \ref{main2}.

\subsection{Remerciements} Ce travail est une partie de ma th\`{e}se de doctorat, r\'{e}alis\'{e}e
 sous la direction de Pierre Colmez et de Ga\"{e}tan Chenevier.
 Je leur suis profond\'{e}ment r\'{e}connaissant pour tout ce qu'ils m'ont appris.
 Il sera plus qu'\'{e}vident au lecteur combien cet article
 doit au travail monumental~\cite{Cbigone} de Colmez.
 Mais plus encore, je lui suis profond\'{e}ment reconnaissant
 pour les nombreuses et longues discussions que nous avons eues
 autour de cet article. En particulier, sans son observation
 que l'action de $\mathfrak{gl_2}$ se prolonge au module de Fontaine, cet article
 n'aurais jamais vu le jour. Je remercie vivement
 Wang Shanwen pour des discussions \'{e}clairantes autour
 de certains points de l'article, ainsi que le rapporteur, dont les remarques m'ont beaucoup aid\'{e}
 \`{a} am\'{e}liorer la pr\'{e}sentation.

\section{Th\'{e}orie de Hodge $p$-adique et $(\varphi,\Gamma)$-modules}

  Le but de ce chapitre est de rappeler quelques r\'{e}sultats standard concernant les liens entre la th\'{e}orie
  des $(\varphi,\Gamma)$-modules et la th\'{e}orie de Hodge $p$-adique. Ces r\'{e}sultats
  seront pleinement utilis\'{e}s dans les chapitres \ref{Kirillovetdualite} et \ref{deRham}, mais la proposition \ref{HC} sera
suffisante pour la d\'{e}monstration du th\'{e}or\`{e}me \ref{main1}. Pour expliquer certaines constructions classiques, il
 faut malheureusement ouvrir la bo\^{i}te de Pandore des anneaux de Fontaine.
  Le lecteur pourra facilement faire abstraction de la plupart de ces anneaux, car pour la suite seuls
 les anneaux $\mathbf{B}_{\rm dR}$, $\mathbf{\tilde{B}}$, $\mathbf{\tilde{B}}^+$ et $\mathcal{R}$ (voir plus bas pour les d\'{e}finitions)
 seront utilis\'{e}s. Voir~\cite{FoAnnals} et~\cite{Espvect} pour les preuves des assertions concernant les anneaux de Fontaine,
ainsi que~\cite{Monodromie} pour une vue d'ensemble.
  Soit $\varepsilon^{(n)}$ une racine primitive d'ordre $p^n$ de l'unit\'{e}, telle que
 $(\varepsilon^{(n+1)})^p=\varepsilon^{(n)}$ pour tout $n$.
 On pose $F_n=\qp(\varepsilon^{(n)})$, $L_n=L\otimes_{\qp} F_n$ et $L_{\infty}=\cup_{n} L_n$.

  \subsection{Anneaux de fonctions analytiques}\label{Fctan}

  Notons, pour $n\geq 1$, $$r_n=\frac{1}{p^{n-1}(p-1)}=v_p(\varepsilon^{(n)}-1).$$

  \begin{definition}

a) Soit $\mathcal{E}$ l'anneau des s\'{e}ries
   de Laurent $\sum_{k\in\mathbf{Z}}a_kT^k$, avec $(a_k)_{k\in\Z}$ une
   suite born\'{e}e d'\'{e}l\'{e}ments de $L$ et $\lim_{k\to-\infty} a_{k}=0$.

b) Soit $\mathcal{E}^{(0,r_n]}$ l'anneau des s\'{e}ries $f=\sum_{k\in\mathbf{Z}}
a_kT^k\in\mathcal{E}$ qui convergent sur la couronne $0<v_p(T)\leq r_n$.
Soit $\mathcal{E}^{\dagger}$ la r\'{e}union des $\mathcal{E}^{(0,r_n]}$.

c) Soit $\mathcal{E}^{]0,r_n]}$ l'anneau des s\'{e}ries $\sum_{k\in\mathbf{Z}} a_kT^k$
   qui convergent sur la couronne
   $0<v_p(T)\leq r_n$ et soit
   $\mathcal{R}$ l'anneau de Robba, r\'{e}union des $\mathcal{E}^{]0,r_n]}$
   \`{a} l'int\'{e}rieur de $L[[T,T^{-1}]]$.
  \end{definition}

  Tous ces anneaux sont munis de topologies naturelles\footnote{ $\mathcal{E}^{]0,r_n]}$ est muni de la topologie de Fr\'{e}chet d\'{e}duite
 des normes sup sur les couronnes $r_m\leq v_p(T)\leq r_n$, avec $m\geq n$ et
  $\mathcal{R}=\cup_{n}\mathcal{E}^{]0,r_n]}$ est muni de la topologie limite inductive. Soit $O_{\mathcal{E}}$ le sous-anneau
 de $\mathcal{E}$ form\'{e} des s\'{e}ries
  $\sum_{n\in\mathbf{Z}} a_nT^n$ avec $a_n\in O_L$. On munit $O_{\mathcal{E}}$
  de la topologie dont une base de voisinages de
  $0$ est donn\'{e}e par les $T^nO_L[[T]]+p^kO_{\mathcal{E}}$ et on met la topologie
  limite inductive sur $\mathcal{E}=\cup_{n} p^{-n}O_{\mathcal{E}}$.}, ainsi que
  d'une action
  continue de $\Gamma$, d\'{e}finie par $\sigma_a (f)(T)=f((1+T)^a-1)$ pour $a\in\zpet$. Les anneaux
  $\mathcal{E}$ et $\mathcal{R}$ sont aussi munis d'un Frobenius
  continu $\varphi$, d\'{e}fini par $\varphi(f)(T)=f((1+T)^p-1)$ et commutant
  \`{a} l'action de $\Gamma$. Si $\Lambda\in \{\mathcal{E}, \mathcal{R}\}$, tout \'{e}l\'{e}ment
  $f\in \Lambda$ s'\'{e}crit de mani\`{e}re unique sous la forme
  $$f=\sum_{i=0}^{p-1}(1+T)^i\varphi(f_i),$$
  avec $f_i\in \Lambda$. En posant $\psi(f)=f_0$, on
  obtient un endomorphisme continu
  de $L$-espaces vectoriels $\psi: \Lambda\to \Lambda$
tel que $\psi(\varphi(f))=f$ pour tout $f\in\Lambda$.

  Soit $t=\log(1+T)\in \mathcal{R}$. On a alors $\sigma_a(t)=at$ pour tout $a\in\zpet$ et $\varphi(t)=pt$. De plus, on
dispose pour tout $n\geq 1$ d'une injection $\Gamma$-\'{e}quivariante $\varphi^{-n}: \mathcal{E}^{]0,r_n]}\to L_n[[t]]$, qui
 envoie $f$ sur $f(\varepsilon^{(n)}e^{t/p^n}-1)$(noter que, comme $f$ converge en $\varepsilon^{(n)}-1$, $f(\varepsilon^{(n)}e^{t/p^n}-1)$
   est bien d\'{e}fini en tant qu'\'{e}l\'{e}ment de $L_n[[t]]$).

\subsection{Anneaux de Fontaine}\label{anneauxFontaine}

  Soit $\mathbf{C}_p$ le compl\'{e}t\'{e} de $\overline{\qp}$ et soit $O_{\mathbf{C}_p}$ l'anneau de ses entiers. Soit $v_p$ la valuation
 $p$-adique sur $\mathbf{C}_p$. On note $\mathbf{\tilde{E}}^+$ l'anneau des suites $x=(x^{(n)})_{n\geq 0}\in O_{\mathbf{C}_p}^{\mathbf{N}}$
telles que $(x^{(n+1)})^p=x^{(n)}$ pour tout
  $n$, l'addition \'{e}tant d\'{e}finie par $$(x+y)^{(n)}=\lim_{j\to\infty} (x^{(n+j)}+y^{(n+j)})^{p^j}$$ et la multiplication \'{e}tant
 d\'{e}finie composante par composante; c'est un anneau parfait de caract\'{e}ristique $p$, muni d'une action\footnote{Cette action est
induite par l'action tautologique de ${\rm Gal}(\overline{\qp}/\qp)$ sur $O_{\mathbf{C}_p}$.} de ${\rm Gal}(\overline{\qp}/\qp)$, commutant
 au Frobenius $x\to x^p$, que l'on note $\varphi$. En posant $v_E(x)=v_p(x^{(0)})$ pour $x\in \mathbf{\tilde{E}}^+$, on obtient une valuation
   sur l'anneau $\mathbf{\tilde{E}}^+$.
L'\'{e}l\'{e}ment $\overline{T}=(\varepsilon^{(n)})_n-1\in\mathbf{\tilde{E}}^+$ satisfait $v_E(\overline{T})=\frac{p}{p-1}$
et le corps des fractions $\mathbf{\tilde{E}}=\mathbf{\tilde{E}}^+[1/ \overline{T}]$ de $\mathbf{\tilde{E}}^+$ est alg\'{e}briquement
 clos et complet pour la valuation $v_E$.

   Soit $\mathbf{\tilde{A}}^+=W(\mathbf{\tilde{E}}^+)$ (resp. $\mathbf{\tilde{A}}=W(\mathbf{\tilde E})$) l'anneau des vecteurs de Witt \`{a}
coefficients dans $\mathbf{\tilde{E}}^+$ (resp. $\mathbf{\tilde{E}}$) et soit $\mathbf{\tilde{B}}^+=\mathbf{\tilde{A}}^+[1/p]$
 (resp. $\mathbf{\tilde{B}}=\mathbf{\tilde{A}}[1/p]$). Tout \'{e}l\'{e}ment de
  $\mathbf{\tilde{A}}^+$ (resp. $\mathbf{\tilde{A}}$) s'\'{e}crit de mani\`{e}re unique sous la forme
  $\sum_{k\geq 0} p^k[x_k]$, avec $x_k\in \mathbf{\tilde{E}}^+$ (resp. $\mathbf{\tilde{E}}$).
  Ici $[x]\in \mathbf{\tilde{A}}^+$ (resp. $\mathbf{\tilde{A}}$) est le repr\'{e}sentant de Teichmuller
de $x\in \mathbf{\tilde{E}}^+$ (resp. $\mathbf{\tilde{E}}$).
  Le Frobenius et l'action de ${\rm Gal}(\overline{\qp}/\qp)$ sur
  $\mathbf{\tilde{E}}^+$ et $\mathbf{\tilde{E}}$ induisent un Frobenius bijectif $\varphi$ et une action de
  ${\rm Gal}(\overline{\qp}/\qp)$ sur $\mathbf{\tilde{A}}^+$, $\mathbf{\tilde{A}}$, $\mathbf{\tilde{B}}$ et
   $\mathbf{\tilde{B}}^+$.
  Le r\'{e}sultat suivant, dans lequel $T=[1+\overline{T}]-1\in\mathbf{\tilde{A}}^+$, est standard~(\cite{FoAnnals}, propositions
  2.4, 2.12 et 2.17).

  \begin{proposition}\label{thetamap}
   L'application $\theta: \mathbf{\tilde{A}}^+\to O_{\mathbf{C}_p}$ d\'{e}finie par
     $\theta(x)=\sum_{n\geq 0} p^n x_n^{(0)}$ si $x=\sum_{n\geq 0} p^n[x_n]$ est un morphisme surjectif d'anneaux et ${\rm Ker}(\theta)=\omega\cdot \mathbf{ \tilde{A}}^+$,
     o\`{u} $\omega=\frac{T}{\varphi^{-1}(T)}$.
     Le s\'{e}par\'{e} compl\'{e}t\'{e} $\mathbf{B}_{\rm dR}^+$ de $\mathbf{\tilde{B}}^+$ pour la topologie $\omega$-adique est un
 anneau de valuation discr\`{e}te, d'uniformisante
 $\omega$ ou\footnote{Noter que l'on utilise la m\^{e}me lettre que
 pour l'\'{e}l\'{e}ment $t=\log(1+T)$ de $\mathcal{R}$.
 Cela s'explique par le fait que l'on a une injection naturelle de $\mathcal{R}^+=\mathcal{R}\cap L[[T]]$
 dans $\mathbf{B}_{\rm dR}^+$, qui envoie $f$ sur $f(e^t-1)$ et cette injection envoie l'\'{e}l\'{e}ment $t$ de
 $\mathcal{R}$ sur $t\in \mathbf{B}_{\rm dR}^+$.} $$t=\log(1+T)=\sum_{n\geq 1} (-1)^{n-1}\frac{T^n}{n}.$$

  \end{proposition}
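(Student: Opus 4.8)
The plan is to treat the four assertions in turn---that $\theta$ is a surjective ring homomorphism, that $\ker\theta=\omega\,\mathbf{\tilde{A}}^+$, and then the discrete valuation ring structure together with the comparison of the two uniformizers---getting the first two by reduction modulo $p$ and the last two by computing the associated graded ring. First I would produce $\theta$. The map $x\mapsto x^{(0)}$ from $\mathbf{\tilde{E}}^+$ to $O_{\mathbf{C}_p}$ is multiplicative, and under the identification $\mathbf{\tilde{E}}^+\simeq\varprojlim_{x\mapsto x^p}O_{\mathbf{C}_p}/p$ its reduction modulo $p$ is the projection to the zeroth factor, hence a ring homomorphism. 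Since $\mathbf{\tilde{E}}^+$ is perfect and $O_{\mathbf{C}_p}$ is $p$-adically complete and separated, the universal property of Witt vectors lifts this to a unique ring homomorphism $W(\mathbf{\tilde{E}}^+)\to O_{\mathbf{C}_p}$ sending $[x]$ to $x^{(0)}$; a short Witt-vector computation shows it is given by the displayed formula, so it is $\theta$. Its reduction modulo $p$ is surjective because Frobenius is surjective on $O_{\mathbf{C}_p}/p$ (as $\mathbf{C}_p$ is algebraically closed), and surjectivity of $\theta$ itself then follows by successive approximation using the $p$-adic completeness of $\mathbf{\tilde{A}}^+$.

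Next I would identify the kernel. Let $u\in\mathbf{\tilde{A}}^+$ be the Teichm\"uller lift of $\varphi^{-1}(1+\bar T)$, so that $\varphi^{-1}(T)=u-1$ and $T=u^p-1$; then $\omega=1+u+\dots+u^{p-1}$ genuinely lies in $\mathbf{\tilde{A}}^+$, and since $\theta(u)=\varepsilon^{(1)}$ is a primitive $p$-th root of unity one gets $\theta(\omega)=\sum_{i=0}^{p-1}(\varepsilon^{(1)})^i=0$, so $\omega\in\ker\theta$. For the reverse inclusion I would reduce modulo $p$: there $\theta$ becomes $x\mapsto x^{(0)}\bmod p$, whose kernel in $\mathbf{\tilde{E}}^+$ is $\{v_E\geq 1\}$. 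A direct computation gives $\bar\omega=\varphi^{-1}(\bar T)^{p-1}$, hence $v_E(\bar\omega)=(p-1)\cdot\frac1p\cdot\frac{p}{p-1}=1$; since $\mathbf{\tilde{E}}^+$ is the valuation ring of $\mathbf{\tilde{E}}$, this forces $(\bar\omega)=\{v_E\geq1\}$. Finally, given $x\in\ker\theta$ I would write $x=\omega y_0+px_1$; applying $\theta$ and using $\theta(\omega)=0$ together with $p$-torsion-freeness of $O_{\mathbf{C}_p}$ gives $x_1\in\ker\theta$, and iterating, the $p$-adic completeness of $\mathbf{\tilde{A}}^+$ yields $x\in\omega\,\mathbf{\tilde{A}}^+$.

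For the valuation ring, inverting $p$ extends $\theta$ to $\mathbf{\tilde{B}}^+\to\mathbf{C}_p$ with kernel $\omega\,\mathbf{\tilde{B}}^+$, a maximal ideal with residue field $\mathbf{C}_p$. As $\mathbf{\tilde{E}}^+$ is a domain, so are $\mathbf{\tilde{A}}^+$ and $\mathbf{\tilde{B}}^+$, so $\omega$ is a non-zero-divisor; multiplication by $\omega^n$ then identifies $\omega^n\mathbf{\tilde{B}}^+/\omega^{n+1}\mathbf{\tilde{B}}^+$ with $\mathbf{C}_p$, that is, the associated graded ring for the $\omega$-adic filtration is the polynomial ring $\mathbf{C}_p[\bar\omega]$. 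This passes to the completion $\mathbf{B}_{\rm dR}^+$, and a complete separated filtered ring whose associated graded is a one-variable polynomial ring over a field is a complete discrete valuation ring with uniformizer $\omega$, every nonzero element having the form $\omega^n\cdot(\text{unit})$ for a unique $n$. For the last point, $T=\omega\cdot\varphi^{-1}(T)$ with $\theta(\varphi^{-1}(T))=\varepsilon^{(1)}-1\neq0$, so $\varphi^{-1}(T)$ is a unit and $T$ has order $1$; writing $t=\log(1+T)=T\,(1-\tfrac{T}{2}+\cdots)$, the second factor is $\equiv1$ modulo $\omega$ hence a unit, so $t$ also has order $1$ and is a uniformizer.

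I expect the discrete valuation ring assertion to be the main obstacle, precisely because $\mathbf{\tilde{B}}^+$ is very far from Noetherian, so the usual structure theory of complete local rings does not apply directly; the associated-graded computation is what circumvents this, and it rests on the nontrivial input that $\mathbf{\tilde{E}}$ is algebraically closed, so that $\mathbf{\tilde{E}}^+$ is a valuation ring in which $\bar\omega$ has valuation exactly $1$.
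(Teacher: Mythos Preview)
Your argument is correct and follows the standard route. Note, however, that the paper does not actually supply a proof of this proposition: it is stated as ``standard'' with a reference to Fontaine (\cite{FoAnnals}, propositions 2.4, 2.12 et 2.17), and no \texttt{proof} environment follows. Your sketch is essentially a reconstruction of Fontaine's original argument---Witt-vector universal property plus reduction modulo $p$ for the map $\theta$, the valuation computation $v_E(\bar\omega)=1$ combined with successive $p$-adic approximation for the kernel, and the associated-graded computation for the DVR structure---so there is no meaningful methodological difference to report. The only point worth a brief remark is your final step on the convergence of $\log(1+T)$: since $p$ is already a unit in $\mathbf{\tilde{B}}^+$, the denominators $1/n$ do not affect the $\omega$-adic valuation of $T^n/n$, and the series does converge; you implicitly use this but might make it explicit.
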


 Le morphisme $\theta:\mathbf{\tilde{B}}^+\to\mathbf{C}_p$ s'\'{e}tend par
    continuit\'{e} en un morphisme surjectif $\mathbf{B}_{\rm{dR}}^+\to\mathbf{C}_p$, dont le
    noyau est engendr\'{e} par $\omega$ ou $t$. On note $\mathbf{B}_{\rm{dR}}=\mathbf{B}_{{\rm dR}}^+[1/t]$, qui est donc un
corps de valuation discr\`{e}te, d'uniformisante $t$ et de corps
    r\'{e}siduel $\mathbf{C}_p$.

    On dispose d'une myriade de sous-anneaux de $\mathbf{\tilde{B}}$, qui sont
   stables sous l'action de
    ${\rm Gal}(\overline{\qp}/\qp)$ et font le lien entre la th\'{e}orie de Hodge $p$-adique et la th\'{e}orie des $(\varphi,\Gamma)$-modules.

 a) Pour tout $r>0$, soit $\mathbf{\tilde{B}}^{(0,r]}$ l'ensemble des $x=\sum_{k>>-\infty} p^k[x_k]
   \in \mathbf{\tilde{B}}$ tels que $\lim_{k\to\infty} v_E(x_k)+\frac{k}{r}=\infty$. Soit $\mathbf{\tilde{B}}^{\dagger}$
   la r\'{e}union des $\mathbf{\tilde{B}}^{(0,r]}$ \`{a} l'int\'{e}rieur de $\mathbf{\tilde{B}}$. Comme $\varphi(\mathbf{\tilde{B}}^{(0,r]})=
   \mathbf{\tilde{B}}^{(0,r/p]}$, $\mathbf{\tilde{B}}^{\dagger}$ est stable
   sous l'action de $\varphi$, qui est bijectif.

 b) Si $x=\sum_{k>>-\infty} p^k[x_k]\in \mathbf{\tilde{B}}^{(0,r]}$, on note $$v^{(0,r]}(x)=\inf_{k\in\mathbf{Z}} \left(v_E(x_k)+\frac{k}{r}\right).$$
Soit $\mathbf{\tilde{B}}^{]0,r]}$
   la compl\'{e}tion de $\mathbf{\tilde{B}}^{(0,r]}$ pour la topologie de Fr\'{e}chet induite par les
   semi-valuations $(\min (v^{(0,s]}, v^{(0,r]}))_{0<s\leq r}$ et soit
   $\mathbf{\tilde{B}}_{\rm rig}$ la r\'{e}union des $\mathbf{\tilde{B}}^{]0,r]}$.
  Cet anneau est muni d'un Frobenius bijectif, obtenu par prolongement \`{a} partir
  du Frobenius sur $\mathbf{\tilde{B}}^{\dagger}$.

 c) Soit $\mathbf{A}_{\qp}$ l'anneau\footnote{Noter que $\mathbf{A}_{\qp}$ n'est rien d'autre que
l'anneau $O_{\mathcal{E}}$ pour $L=\qp$.} des s\'{e}ries de Laurent $\sum_{k\in \mathbf{Z}} a_k T^k$, o\`{u}
$a_k\in\zp$ et $\lim_{k\to -\infty} a_k=0$. On voit $\mathbf{A}_{\qp}$ \`{a} l'int\'{e}rieur de
    $\mathbf{\tilde{A}}$ en envoyant $T$ sur $[1+\overline{T}]-1$. Soit
    $\mathbf{B}$ l'adh\'{e}rence, pour la topologie $p$-adique,
    de l'extension
    maximale non-ramifi\'{e}e de $\mathbf{A}_{\qp}[1/p]$ dans $\mathbf{\tilde{B}}$. Ce sous-anneau de $\mathbf{\tilde{B}}$
est stable sous l'action de $\varphi$.
Finalement, on note
   $\mathbf{B}^{(0,r]}=\mathbf{B}\cap \mathbf{\tilde{B}}^{(0,r]}$
   et $\mathbf{B}_{\rm rig}=\mathcal{R}\otimes_{\mathcal{E}^{\dagger}}
   \mathbf{B}^{\dagger}$.

  On peut montrer\footnote{L'action
   de ${\rm Gal}(\overline{\qp}/\qp)$ sur tous les anneaux et objets qui suivent est $L$-lin\'{e}aire. Pour la preuve de ces isomorphismes,
 voir~\cite{Espvect}, propositions 7.1, 7.5 et 7.6, ainsi que~\cite{Ber}, proposition 3.15.}
que l'on a des identifications compatibles aux actions\footnote{Noter que les actions de $\varphi$ et $\Gamma$ sur $T=[1+\overline{T}]-1$
sont donn\'{e}es par $\varphi(T)=(1+T)^p-1$ et
$\sigma_a(T)=(1+T)^a-1$, i.e. ce sont les m\^{e}mes que celles d\'{e}finies dans la partie \ref{Fctan}.} de $\varphi$ et
 $\Gamma$ $$(L\otimes_{\qp} \mathbf{B})^H=\mathcal{E},\quad
   (L\otimes_{\qp} \mathbf{B}^{\dagger})^H=\mathcal{E}^{\dagger}, \quad
   (L\otimes_{\qp} \mathbf{B}_{\rm rig})^H=\mathcal{R}$$ et
   $(L\otimes_{\qp} \mathbf{B}^{(0,r]})^H=\mathcal{E}^{(0,r]}$ pour tout
   $r>0$ assez petit.

 \subsection{$(\varphi,\Gamma)$-modules}\label{phi}

 \begin{definition} \label{phimodules} Un $(\varphi,\Gamma)$-module $D$ sur $\Lambda\in \{\mathcal{E},\mathcal{E}^{\dagger},\mathcal{R}\}$
  est (dans cet article) un $\Lambda$-module libre de type fini, muni d'une action continue semi-lin\'{e}aire
  de $\Gamma$ et d'un op\'{e}rateur semi-lin\'{e}aire $\varphi$, commutant \`{a} $\Gamma$ et tel que $\Lambda\varphi(D)=D$.
 \end{definition}

  \textit{Dans la suite de ce chapitre, on fixe une $L$-repr\'{e}sentation $V$ de} ${\rm Gal}(\overline{\qp}/\qp)$. La
th\'{e}orie de Fontaine~\cite{FoGrot}, raffin\'{e}e par les travaux de
   Cherbonnier-Colmez~\cite{CCsurconv}, Kedlaya~\cite{Ked} et Berger~\cite{Ber}, associe \`{a} $V$ une famille
 de $\Gamma$ et $(\varphi,\Gamma)$-modules.

    \begin{definition} \label{phigamma}
    On d\'{e}finit:

    a) Les $(\varphi,\Gamma)$-modules $D=(\mathbf{B}\otimes_{\qp} V)^H$, $D^{\dagger}=(\mathbf{B}^{\dagger}\otimes_{\qp} V)^H$, $\tilde{D}_{\rm rig}=
    (\mathbf{\tilde{B}}_{\rm rig}\otimes_{\qp} V)^H,$
    $$D_{\rm rig}=(\mathbf{B}_{\rm rig}\otimes_{\qp} V)^H=\mathcal{R}\otimes_{\mathcal{E}^{\dagger}} D^{\dagger}$$ sur $\mathcal{E}$, $\mathcal{E}^{\dagger}$, $(L\otimes_{\qp} \mathbf{\tilde{B}}_{\rm rig})^H$ et $\mathcal{R}$, respectivement.

    b) Les $\Gamma$-modules $D^{(0,r_n]}=(\mathbf{B}^{(0,p^{-n}]}\otimes_{\qp} V)^H$, $\tilde{D}^{(0,r_n]}=(\mathbf{\tilde{B}}^{(0,p^{-n}]}\otimes_{\qp} V)^H$,
    $D^{]0,r_n]}=\mathcal{E}^{]0,r_n]}\otimes_{\mathcal{E}^{(0,r_n]}} D^{(0,r_n]}$
 et $\tilde{D}^{]0,r_n]}=(\mathbf{\tilde{B}}^{]0,p^{-n}]}\otimes_{\qp} V)^H$.

    c) Les $(\varphi,\Gamma)$-modules $\tilde{D}=(\mathbf{\tilde{B}}\otimes_{\qp} V)^H$, $\tilde{D}^+=(\mathbf{\tilde{B}}^+\otimes_{\qp} V)^H$.

    d) Les
    $\Gamma$-modules   $\tilde{D}_{\text{dif}}=(\mathbf{B}_{\text{dR}}\otimes_{\qp} V)^H$
    et $\tilde{D}_{\text{dif}}^+=(\mathbf{B}_{\text{dR}}^+\otimes_{\qp} V)^H$.

  \end{definition}

  On dispose alors du r\'{e}sultat fondamental suivant, d\^{u} \`{a} Cherbonnier et Colmez~\cite{CCsurconv}.

  \begin{theorem}\label{surconv}
   Il existe un entier $m(D)$ tel que
 $D^{(0,r_{m(D)}]}$ soit
   un $\mathcal{E}^{(0,r_{m(D)}]}$-module libre de rang $\dim_L V$, tel que\footnote{Tous les produits tensoriels sont pris au-dessus de $\mathcal{E}^{(0,r_{m(D)}]}$.}
   $D=\mathcal{E}\otimes D^{(0,r_{m(D)}]}$. De plus, pour tout $n\geq m(D)$ on a
   $D^{(0,r_n]}=
\mathcal{E}^{(0,r_n]}\otimes D^{(0,r_{m(D)}]}$.

  \end{theorem}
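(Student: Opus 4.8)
The plan is to manufacture, out of an arbitrary $\mathcal{E}$-basis of $D$, a new basis whose matrices of $\varphi$ and of $\Gamma$ already have entries in some $\mathcal{E}^{(0,r_m]}$; its $\mathcal{E}^{(0,r_m]}$-span is then the desired free lattice $D^{(0,r_m]}$, and $D=\mathcal{E}\otimes D^{(0,r_m]}$ holds by construction. Through Fontaine's dictionary relating $V$ to $D=(\mathbf{B}\otimes_{\qp}V)^H$, this is the same as producing inside $\mathbf{B}\otimes_{\qp}V$ an $H$-stable overconvergent lattice that is stable under $\varphi$ and $\Gamma$. I would treat it as a decompletion (Tate--Sen) problem for the action of $G=\mathrm{Gal}(\overline{\qp}/\qp)$ on the Fontaine rings, writing $H=\ker\chi=\mathrm{Gal}(\overline{\qp}/\qp(\mu_{p^\infty}))$ and $\Gamma=G/H$.

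First I would solve the problem over the completed (perfect) rings. On $\mathbf{\tilde B}$ one has the overconvergent subrings $\mathbf{\tilde B}^{(0,r]}$, the valuations $v^{(0,r]}$, and decompletion operators $R_n$ attached to the cyclotomic tower $\qp(\mu_{p^n})$. The technical core is to verify the three Tate--Sen axioms in this setting: a uniform bound for the distortion of $v^{(0,r]}$ under $G$; the existence, for $\gamma\in\Gamma$ sufficiently close to $1$, of a bounded near-inverse of $\gamma-1$ on $\ker R_n$; and the estimate that $1-R_n$ is increasingly divisible on $\mathbf{\tilde B}^{(0,r]}$. These rest on Tate's ramification estimates for $\qp(\mu_{p^\infty})/\qp$ together with the structure of the field of norms. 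Granting them, the usual Tate--Sen descent of $1$-cocycles produces an overconvergent, $H$- and $\Gamma$-stable lattice over the perfect ring $\mathbf{\tilde B}^\dagger$.

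The real obstacle is the \emph{descent from the perfect to the imperfect ring}, i.e.\ from $\mathbf{\tilde B}^\dagger$ (coefficients in $\mathbf{\tilde E}$) down to $\mathbf{B}^\dagger$ (coefficients in the field of norms $\mathbf{E}=\fp((\overline T))\subset\mathbf{\tilde E}$). Here I would exploit the contraction recorded in \S\ref{anneauxFontaine}, namely $\varphi(\mathbf{\tilde B}^{(0,r]})=\mathbf{\tilde B}^{(0,r/p]}$: the left inverse $\psi$ of $\varphi$ improves the radius of overconvergence by a factor $p$, while being, up to a controlled error, a retraction of the inclusion $\mathbf{B}\hookrightarrow\mathbf{\tilde B}$. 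The key lemma to prove is iterative: if the Frobenius matrix of the current basis lies within $p^N$ of a matrix over $\mathbf{B}^{(0,r]}$, a suitable change of basis gains one further power of $p$, the correction being small precisely because it lives where $\psi$ contracts. Iterating and invoking $p$-adic completeness then produces a basis over $\mathbf{B}$ whose Frobenius matrix lies in $\mathbf{B}^{(0,r_m]}$, hence in $\mathcal{E}^{(0,r_m]}$ after $\otimes_{\qp}L$ and taking $H$-invariants. I expect this decompletion, balancing the radius improvement from $\psi$ against the $p$-adic error, to be the main difficulty.

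It remains to transport the $\Gamma$-action and to obtain the level compatibility. Since $\Gamma$ commutes with $\varphi$ and acts continuously, the same decompletion estimates force its matrix into $\mathcal{E}^{(0,r_m]}$ after possibly enlarging $m$, so $D^{(0,r_m]}$ is a free $\mathcal{E}^{(0,r_m]}$-module of rank $\dim_L V$ with $D=\mathcal{E}\otimes D^{(0,r_m]}$. Finally, for $n\ge m$ the equality $D^{(0,r_n]}=\mathcal{E}^{(0,r_n]}\otimes D^{(0,r_m]}$ follows from the flatness of $\mathcal{E}^{(0,r_m]}\hookrightarrow\mathcal{E}^{(0,r_n]}$ together with the compatibility of the operators $R_n$ across levels, both sides being the overconvergent vectors at radius $r_n$ of the single lattice just constructed.
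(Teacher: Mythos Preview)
The paper does not prove this theorem: it is stated as a fundamental result due to Cherbonnier and Colmez and simply cited from~\cite{CCsurconv}, with no proof given here. So there is no ``paper's own proof'' to compare against.

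That said, your outline is a recognisable sketch of the strategy that actually underlies the known proofs. The two-step plan --- first a Tate--Sen style descent to obtain an overconvergent lattice over the perfect ring $\mathbf{\tilde B}^\dagger$, then a decompletion from the perfect to the imperfect ring $\mathbf{B}^\dagger$ --- is essentially the approach of Cherbonnier--Colmez (the axiomatic Tate--Sen formulation you allude to is closer to the later treatment of Berger--Colmez~\cite{BC}). Your identification of the hard step is correct: the passage from $\mathbf{\tilde B}^\dagger$ to $\mathbf{B}^\dagger$ is where the real work lies, and the mechanism you describe (an iterative approximation exploiting the contracting behaviour of $\varphi$ on the overconvergence radius) is the right one. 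One small caveat: your suggestion that the $\Gamma$-matrix can be pushed into $\mathcal{E}^{(0,r_m]}$ ``by the same decompletion estimates'' is a bit glib; in practice one uses the continuity of the $\Gamma$-action together with the fact that $\varphi$ already lands in the overconvergent ring, rather than rerunning the decompletion. Likewise the last clause about level compatibility is more delicate than ``flatness'' alone: one really needs that the constructed basis is already a basis at every smaller radius, which comes from the way the approximation was set up. But as a blueprint for a result the paper takes as input, your proposal is sound.
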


  \begin{remark}\label{deb} 1) On d\'{e}duit du th\'{e}or\`{e}me \ref{surconv} que $D^{]0,r_n]}$,
$D^{\dagger}$ et $D_{\rm rig}$ s'obtiennent \`{a} partir de $D^{(0,r_{m(D)}]}$ par extension des scalaires
\`{a} $\mathcal{E}^{]0,r_n]}$, $\mathcal{E}^{\dagger}$ et $\mathcal{R}$, respectivement. On obtient aussi
un isomorphisme canonique $D=\mathcal{E}\otimes_{\mathcal{E}^{\dagger}} D^{\dagger}$.

2) Dans la suite on se permettra d'augmenter un nombre fini de fois $m(D)$ si besoin est. Par exemple, dans le chapitre V de~\cite{Cbigone} on construit
   des entiers $m(D)$, $m_1(D)$,...,$m_5(D)$. On supposera que notre
   $m(D)$ est plus grand que toutes ces constantes.
  \end{remark}

\begin{remark}\label{pleinefidelite}
 On peut r\'{e}cup\'{e}rer $V$ de mani\`{e}re fonctorielle \`{a} partir de $D$, $D^{\dagger}$ et $D_{\rm rig}$
 par $$V=((L\otimes_{\qp} \mathbf{B})\otimes_{\mathcal{E}} D)^{\varphi=1}= ((L\otimes_{\qp} \mathbf{B}^{\dagger})\otimes_{\mathcal{E}^{\dagger}} D^{\dagger})^{\varphi=1}=((L\otimes_{\qp}\mathbf{\tilde{B}}_{\rm rig})\otimes_{\mathcal{R}} D_{\rm rig})^{\varphi=1}.$$
Comme on a aussi un isomorphisme fonctoriel $D_{\rm rig}=(\mathbf{B}_{\rm rig}\otimes_{\qp} V)^H$, le foncteur $V\to D_{\rm rig}$ est
pleinement fid\`{e}le; en particulier ${\rm End}_{L[{\rm Gal}(\overline{\qp}/\qp)]}(V)={\rm End}_{\varphi,\Gamma, \mathcal{R}}(D_{\rm rig})$
pour toute $L$-repr\'{e}sentation $V$.

  \end{remark}

  On finit cette partie en rappelant la construction des applications
  de localisation pour un $(\varphi,\Gamma)$-module. Elles vont jouer un
  r\^{o}le crucial dans la suite. Si $x=\sum_{k>>-\infty}p^k[x_k]\in \mathbf{\tilde{B}}^{(0,1]}$, la s\'{e}rie
   $\sum_{k>>-\infty} p^k[x_k]$
     converge dans $\mathbf{B}_{\text{dR}}^+$, ce qui fournit un morphisme naturel $\mathbf{\tilde{B}}^{(0,1]}\to
     \mathbf{B}_{\text{dR}}^+$, qui se trouve \^{e}tre injectif. Ce morphisme s'\'{e}tend en un morphisme, toujours injectif, de $\mathbf{\tilde{B}}^{]0,1]}$ dans $\mathbf{B}_{\text{dR}}^+$.
     Compos\'{e} avec l'isomorphisme $\mathbf{\tilde{B}}^{]0,1]}\simeq \mathbf{\tilde{B}}^{]0,p^{-n}]}$
     induit par $\varphi^n$, ce morphisme induit une application de localisation
     $\varphi^{-n}: \mathbf{\tilde{B}}^{]0,p^{-n}]}\to \mathbf{B}_{\text{dR}}^+$, compatible avec l'action de ${\rm Gal}(\overline{\qp}/\qp)$.

  \begin{definition}
  \label{ddif}
On note $\varphi^{-n}:
D^{]0,r_n]}\to \tilde{D}_{\rm dif}^+$ la compos\'{e}e de l'injection
$D^{]0,r_n]}\subset \tilde{D}^{]0, r_n]}=(\mathbf{\tilde{B}}^{]0,p^{-n}]}\otimes_{\qp}V)^H$ avec l'application $\varphi^{-n}\otimes 1: \mathbf{\tilde{B}}^{]0,p^{-n}]}\otimes V\to \mathbf{B}_{\text{dR}}^+\otimes V$. Cette injection $L$-lin\'{e}aire, $\Gamma$-\'{e}quivariante est appel\'{e}e \textit{morphisme de localisation} en $\varepsilon^{(n)}-1$.
\end{definition}

\begin{example}\label{trivialite} Si $D$ est le $(\varphi,\Gamma)$-module
   attach\'{e} \`{a} la repr\'{e}sentation triviale, le morphisme de localisation est l'injection $\varphi^{-n}$ de $\mathcal{E}^{]0,r_n]}\to L_n[[t]]$
   de la partie \ref{Fctan}, compos\'{e}e avec l'inclusion $L_n[[t]]\subset
    L\otimes_{\qp} (\mathbf{B}_{\rm dR}^+)^H$.
 \end{example}

\subsection{Le module de Fontaine $D_{{\rm dif},n}^+$}

  \begin{definition} Soit $D_{{\rm dif},n}^+$ le sous
   $L_n[[t]]$-module de $\tilde{D}_{\rm dif}^+$ engendr\'{e} par l'image du morphisme de localisation $\varphi^{-n}: D^{]0,r_n]}\to\tilde{D}_{\rm dif}^+=(\mathbf{B}_{\rm dR}^+\otimes_{\qp} V)^H$.
  \end{definition}

 \begin{remark}\label{moduleFontaine}

  a) Pour $n\geq m(D)$ on a
   $$D_{{\rm dif},n}^+=L_n[[t]]\otimes_{\mathcal{E}^{]0,r_n]}} D^{]0,r_n]},$$
   o\`{u} $L_n[[t]]$ est vu comme $\mathcal{E}^{]0,r_n]}$-module via $\varphi^{-n}$ (voir l'exemple
   \ref{trivialite}).
  Le th\'{e}or\`{e}me \ref{surconv} et la remarque \ref{deb} entra\^{i}nent que
 $D_{{\rm dif},n}^+$ est un $L_n[[t]]$-module libre de rang $\dim_L V$,
 stable sous l'action de $\Gamma$. De plus, $\Gamma_n$ agit de mani\`{e}re
 $L_n$-lin\'{e}aire sur $D_{{\rm dif},n}^+$.

b) La repr\'{e}sentation $V$ est dite de de Rham si le
   $L$-espace vectoriel $$D_{\rm dR}(V)=(\mathbf{B}_{\rm dR}\otimes_{\qp} V)^{{\rm Gal} (\overline{\qp}/\qp)}$$ est de dimension $\dim_L V$. La filtration
   sur $D_{\rm dR}(V)$ est d\'{e}finie par ${\rm Fil}^{i}(D_{\rm dR}(V))=(t^i\mathbf{B}_{\rm dR}^+\otimes_{\qp} V)^{{\rm Gal} (\overline{\qp}/\qp)}$ pour
   tout $i\in \Z$. On note aussi $D^+_{\rm dR}(V)={\rm Fil}^0(D_{\rm dR}(V))$.
 Si $V$ est de de Rham, alors on peut retrouver
$D_{\rm dR}(V)$ et sa filtration \`{a} partir de $D_{{\rm dif},n}^+$ par la recette
${\rm Fil}^{i}(D_{\rm dR}(V))=(t^iD_{{\rm dif},n}^+)^{\Gamma}$, pour $n\geq m(D)$ et $i\in \Z$. De plus, $D_{{\rm dif},n}^+[1/t]=L_n((t))\otimes_{L} D_{\rm dR}(V)$
pour tout $n\geq m(D)$. Notons qu'avec nos conventions, si $V$ est de de Rham, alors les poids de Hodge-Tate de $V$ sont les oppos\'{e}s des sauts de la filtration sur $D_{\rm dR}(V)$.

c) Dans tous les cas, $D_{{\rm dif},n}^+$ engendre $\mathbf{B}_{\rm dR}^+\otimes_{\qp} V$ comme $\mathbf{B}_{\rm dR}^+$-module (voir \cite{FoAst}).

\end{remark}

 \subsection{L'action infinit\'{e}simale de $\Gamma$}\label{FontaineSen}

  D'apr\`{e}s Berger~\cite{Ber}, lemme 4.1 et ce qui suit, l'action infinit\'{e}simale de
  $\Gamma$ sur $D_{\rm rig}$ d\'{e}finit une connexion
     $$\nabla: D_{{\rm rig}}\to D_{{\rm rig}}, \quad \nabla(z)=\lim_{a\to 1}\frac{\sigma_a(z)-z}{a-1},$$ qui commute \`{a} $\varphi$ et $\Gamma$
      et laisse stable $D^{]0,r_n]}$ pour $n\geq m(D)$.
    Par exemple, si $V$ est triviale, alors $D_{\rm rig}=\mathcal{R}$
    et $$\nabla(f)=(1+T)\log(1+T)\cdot \frac{d}{dT}f(T)=t\cdot\frac{df}{dt}$$ pour $f\in\mathcal{R}$.
     En g\'{e}n\'{e}ral, la connexion
    $\nabla$ satisfait
     $\nabla(fz)=\nabla(f)z+f\nabla(z)$ pour tout $z\in D_{\rm rig}$ et tout
     $f\in\mathcal{R}$.

   L'action de $\Gamma$ sur $D_{{\rm dif},n}^+$ (avec $n\geq m(D)$) peut aussi se d\'{e}river, ce qui induit une connexion $L_n$-lin\'{e}aire $\nabla=\lim_{a\to 1}\frac{\sigma_a-1}{a-1}$ sur $D_{{\rm dif},n}^+$, au-dessus de la connexion $t\frac{d}{dt}$ sur
   $L_n[[t]]$. Cette connexion pr\'{e}serve donc $t^i D_{{\rm dif},n}^+$ pour tout
   $i\geq 0$.

  \begin{definition}\label{HTdef} Les poids de Hodge-Tate
  g\'{e}n\'{e}ralis\'{e}s (resp. le polyn\^{o}me $P_{\rm Sen,V}$ de Sen) de $V$ sont les valeurs propres (resp. le polyn\^{o}me caract\'{e}ristique) de $\nabla$ agissant sur le $L_n$-module libre de type fini $D_{{\rm Sen},n}:=D_{{\rm dif},n}^+/tD_{{\rm dif},n}^+$.
  \end{definition}

\begin{remark}
 La d\'{e}finition \ref{HTdef} est compatible avec la d\'{e}finition classique\footnote{D'apr\`{e}s Sen~\cite{Sen}, il existe un plus grand
  sous-$L_{\infty}$-module libre de type
  fini $D_{\rm Sen}(V)$ de $(\mathbf{C}_p\otimes_{\qp} V)^H$,
  qui est stable par $\Gamma$. Le module $D_{\rm Sen}(V)$ engendre $\mathbf{C}_p\otimes_{\qp} V$ sur $\mathbf{C}_p$ et est de rang $\dim_L V$. L'action infinit\'{e}simale de $\Gamma$ sur
  $D_{\rm Sen}(V)$ d\'{e}finit un op\'{e}rateur $L_{\infty}$-lin\'{e}aire $\Theta_{\rm Sen}$, dont les valeurs propres s'appelent les poids de Hodge-Tate g\'{e}n\'{e}ralis\'{e}s de $V$.} des poids de Hodge-Tate g\'{e}n\'{e}ralis\'{e}s. En effet, $\theta: (\mathbf{B}_{\rm dR}^+\otimes_{\qp} V)^H
 \to (\mathbf{C}_p\otimes_{\qp} V)^H$ identifie $D_{{\rm Sen},n}$ \`{a} un sous-$L_n$-module de $D_{\rm Sen}(V)$, tel que
 $D_{{\rm Sen},n}\otimes_{L_n} L_{\infty}=D_{\rm Sen}(V)$.

\end{remark}

 Le r\'{e}sultat suivant joue un r\^{o}le crucial dans la preuve du th\'{e}or\`{e}me
 \ref{main1}.

\begin{proposition}\label{HC}
 Soit $V$ une $L$-repr\'{e}sentation de ${\rm Gal}(\overline{\qp}/\qp)$ et
 soit $D_{\rm rig}$ le $(\varphi,\Gamma)$-module sur $\mathcal{R}$ attach\'{e} \`{a}
 $V$. Alors $P_{\rm{Sen,V}}(\nabla)$ envoie $D_{\rm rig}$ dans $t\cdot D_{\rm rig}$.
\end{proposition}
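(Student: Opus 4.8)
The statement is essentially a local, differential statement about the module $D_{\mathrm{rig}}$, and the natural strategy is to reduce it to the corresponding statement on the Fontaine module $D_{\mathrm{dif},n}^+$ via the localization maps $\varphi^{-n}$, where the Sen operator governs everything. First I would recall that by Definition \ref{HTdef} the polynomial $P_{\mathrm{Sen},V}$ is the characteristic polynomial of $\nabla$ acting on $D_{\mathrm{Sen},n}=D_{\mathrm{dif},n}^+/tD_{\mathrm{dif},n}^+$. The Cayley--Hamilton theorem applied to the $L_n$-linear operator $\nabla$ on the free $L_n$-module $D_{\mathrm{Sen},n}$ gives immediately that $P_{\mathrm{Sen},V}(\nabla)$ kills $D_{\mathrm{Sen},n}$, i.e. $P_{\mathrm{Sen},V}(\nabla)$ maps $D_{\mathrm{dif},n}^+$ into $tD_{\mathrm{dif},n}^+$. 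This is the heart of the matter and the only place where the hypotheses really enter.

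The remaining work is to transport this from $D_{\mathrm{dif},n}^+$ back to $D_{\mathrm{rig}}$. For $n\geq m(D)$ the localization map $\varphi^{-n}\colon D^{]0,r_n]}\to D_{\mathrm{dif},n}^+$ is $\Gamma$-equivariant and $\nabla$ on $D_{\mathrm{dif},n}^+$ lies above $t\tfrac{d}{dt}$ compatibly with $\nabla$ on $D_{\mathrm{rig}}$ (as recorded in Section \ref{FontaineSen}); hence $\varphi^{-n}$ intertwines the two actions of $\nabla$, and consequently of the polynomial $P_{\mathrm{Sen},V}(\nabla)$ since the coefficients lie in $L\subset L_n$ and are fixed by $\Gamma$. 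Thus for $z\in D^{]0,r_n]}$ we get $\varphi^{-n}\bigl(P_{\mathrm{Sen},V}(\nabla)(z)\bigr)=P_{\mathrm{Sen},V}(\nabla)\bigl(\varphi^{-n}(z)\bigr)\in tD_{\mathrm{dif},n}^+$ by the previous paragraph.

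The final step is a descent argument: knowing that $\varphi^{-n}(w)\in tD_{\mathrm{dif},n}^+$ for $w=P_{\mathrm{Sen},V}(\nabla)(z)$, I want to conclude $w\in t\,D^{]0,r_n]}$, and then pass to $D_{\mathrm{rig}}=\mathcal{R}\otimes_{\mathcal{E}^{]0,r_n]}}D^{]0,r_n]}$. Since $\varphi^{-n}(t)=t$ and the element $t$ generates the kernel of $\theta$ in $\mathbf{B}_{\mathrm{dR}}^+$, the condition $\varphi^{-n}(w)\in tD_{\mathrm{dif},n}^+$ says exactly that the image of $w$ in $D_{\mathrm{Sen},n}=D_{\mathrm{dif},n}^+/tD_{\mathrm{dif},n}^+$ vanishes for every admissible $n$. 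The point is that the family of localization maps $(\varphi^{-n})_{n\geq m(D)}$, together with the $\Gamma$-action, separates $t\,D^{]0,r_n]}$ from $D^{]0,r_n]}$: an element of $D^{]0,r_n]}$ whose image under $\varphi^{-m}$ lands in $t\,D_{\mathrm{dif},m}^+$ for all $m\geq n$ is divisible by $t$, because $t=\log(1+T)$ vanishes to first order at each $\varepsilon^{(m)}-1$ and these points accumulate, so the quotient $w/t$ still converges on the relevant annulus.

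The main obstacle I anticipate is precisely this last divisibility/descent step: one must verify that divisibility by $t$ detected pointwise at all the $\varepsilon^{(m)}-1$ (equivalently in all the Sen quotients) genuinely implies divisibility of $w$ by $t$ inside $D^{]0,r_n]}$, and hence inside $D_{\mathrm{rig}}$. This is a standard feature of the theory---$t$ is (up to units) the element $\prod_{m}\varphi^{m}(q)$-type product whose zero locus is exactly $\{\varepsilon^{(m)}-1\}$---but making the estimate clean requires either invoking the explicit zeros of $t$ on the Robba ring or a direct argument that $\nabla$ preserves $D^{]0,r_n]}$ and that the cokernel-type quotient is controlled by the $D_{\mathrm{Sen},n}$. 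Everything upstream (Cayley--Hamilton, $\Gamma$-equivariance of $\varphi^{-n}$) is routine; the descent is where care is needed.
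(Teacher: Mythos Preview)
Your proposal is correct and follows essentially the same route as the paper: Cayley--Hamilton on $D_{\mathrm{Sen},n}$, compatibility of $\varphi^{-n}$ with $\nabla$, and then descent from divisibility by $t$ in all the $D_{\mathrm{dif},n}^+$ back to $D_{\rm rig}$. The descent step you flag as the obstacle is exactly what the paper isolates and dispatches by citing Berger (\cite{Ber}, lemmes~5.1 and~5.4); one small slip is that $\varphi^{-n}(t)=t/p^n$ rather than $t$, but this is a unit multiple and does not affect your argument.
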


\begin{proof}
 Si $z\in D_{\rm rig}$ et $\varphi^{-n}(z)\in tD_{\rm dif, n}^+$ pour
    tout $n$ assez grand, alors $z\in tD_{\rm rig}$ (voir~\cite{Ber}, lemmes 5.1
    et 5.4). On se ram\`{e}ne donc \`{a} montrer que pour tout $z\in D_{\rm rig}$ et pour tout $n$ assez grand on a
    $\varphi^{-n}(P_{\rm Sen, V}(\nabla)(z))\in t\cdot D_{{\rm dif},n}^+$.
    Puisque $\varphi^{-n}$ commute \`{a} $\nabla$, il suffit de v\'{e}rifier que
    $P_{\rm Sen, V}(\nabla)$ envoie $D_{\rm dif, n}^+$ dans $t\cdot D_{{\rm dif},n}^+$.
    Cela d\'{e}coule du th\'{e}or\`{e}me de Cayley-Hamilton dans le $L_n$-module libre de type fini $D_{{\rm Sen},n}$.

\end{proof}

\subsection{Traces de Tate normalis\'{e}es}\label{Tate}

   Soit $L_{\infty}=\cup_{n} L_n$. Pour tout
   $n\geq 0$ on dispose d'un projecteur $\Gamma$-\'{e}quivariant $T_n: L_{\infty}\to L_n$,
   d\'{e}fini par $$T_n(x)=\frac{1}{p^j}{\rm Tr}_{L_{n+j}/L_n}(x)$$ pour tout $j$ tel que $x\in L_{n+j}$.
   On en d\'{e}duit des projecteurs $\Gamma$-\'{e}quivariants
   $${\rm Res}_{p^{-n}\zp}: L_{\infty}((t))\to L((t)), \quad {\rm Res}_{p^{-n}\zp}(\sum_{l>>-\infty} a_l t^l)=
   \sum_{l>>-\infty} T_n(a_l)t^l.$$
     Dans~\cite{Cannals}, proposition V.4.5,
    Colmez \'{e}tend ces projecteurs en des
   projecteurs $L((t))$-lin\'{e}aires continus (appel\'{e}s
   \textit{ traces de Tate normalis\'{e}es})
   ${\rm Res}_{p^{-n}\zp}: L\otimes \mathbf{B}^H_{\rm dR}\to L_n((t))$, envoyant $L\otimes (\mathbf{B}_{\rm dR}^+)^H$ dans
   $L_n[[t]]$.

\subsection{L'op\'{e}rateur $\psi$}

    Comme
    $\varphi(D)$ engendre $D$,
    tout \'{e}l\'{e}ment $z$ de $D$ s'\'{e}crit de mani\`{e}re unique
    sous la forme $$z=\sum_{i=0}^{p-1}(1+T)^i\varphi(z_i),$$ avec
    $z_i\in D$, ce qui permet de d\'{e}finir un op\'{e}rateur
    continu $\psi$ sur $D$ par $\psi(z)=z_0$. On v\'{e}rifie facilement que
    $\psi(f\varphi(z))=\psi(f)z$ et $\psi(\varphi(f)z)=f\psi(z)$
    pour $f\in \mathcal{E}$ et $z\in D$. De plus, $\psi$ commute \`{a} $\Gamma$.
    Tous ces r\'{e}sultats restent valables si on remplace $D$ par $D_{\rm rig}$ et $\mathcal{E}$ par $\mathcal{R}$.

\section{Actions infinit\'{e}simales sur $D_{\rm rig}$}

  Dans ce chapitre on d\'{e}montre le th\'{e}or\`{e}me \ref{main1}. L'id\'{e}e de la preuve est tr\`{e}s simple: on montre que l'op\'{e}rateur de Casimir (qui engendre le centre de
  l'alg\`{e}bre $U(\mathfrak{sl_2})$) induit un endomorphisme de
  $D_{\rm rig}$, qui est forc\'{e}ment scalaire par des r\'{e}sultats
  de Berger et Kedlaya. Pour identifier ce scalaire, on utilise le lien
  entre la th\'{e}orie de Hodge $p$-adique et les $(\varphi,\Gamma)$-modules,
  plus pr\'{e}cis\'{e}ment la proposition \ref{HC}. Pour en d\'{e}duire
  l'action de $u^-$ sur $D_{\rm rig}$ il ne reste plus qu'\`{a} revenir
  aux formules explicites donnant l'action de ${\rm GL}_2(\qp)$ et
  \`{a} exprimer l'action du Casimir seulement en fonction de $u^-$.
  Les premi\`{e}res parties de ce chapitre sont pr\'{e}liminaires
  et rappellent des constructions diverses et vari\'{e}es de Colmez.
  \textit{Dans la suite $V$ est une $L$-repr\'{e}sentation
  absolument irr\'{e}ductible, de dimension $2$ de ${\rm Gal}(\overline{\qp}/\qp)$.}
  On note $D$ et $D_{\rm rig}$ les $(\varphi,\Gamma)$-modules sur
  $\mathcal{E}$ et $\mathcal{R}$ attach\'{e}s \`{a} $V$ (d\'{e}finition
  \ref{phigamma}).

\subsection{Le faisceau $U\to \Delta\boxtimes U$}\label{involution}

   Soit $\Delta\in\{D,D_{\rm rig}\}$. Dans la partie \ref{faisceau}, on a expliqu\'{e} la construction d'un faisceau $P^+$-\'{e}quivariant
   sur $\zp$, attach\'{e} \`{a} $\Delta$, qui se prolonge (d'apr\`{e}s les chapitres II et V de~\cite{Cbigone})
    en un faisceau sur ${\p1}(\qp)$,
 \'{e}quivariant sous l'action de ${\rm GL}_2(\qp)$. Soit $w_D$ la restriction \`{a} $\Delta\boxtimes\zpet$ de l'action de l'involution $w=\left(\begin{smallmatrix} 0 & 1 \\1 & 0 \end{smallmatrix}\right)$, de telle sorte que
 l'espace des sections globales du faisceau
  attach\'{e} \`{a} $\Delta$ est
  $$\Delta\boxtimes\p1=\{(z_1,z_2)\in \Delta\times \Delta| \quad {\rm Res}_{\zpet}(z_2)=
w_D({\rm Res}_{\zpet}(z_1))\}.$$

L'application de prolongement par z\'{e}ro $i_{\zp}: \Delta\to \Delta\boxtimes
\p1$ d\'{e}finie par $i_{\zp}(z)=(z, w_D({\rm Res}_{\zpet}(z)))$ permet d'identifier
$\Delta$ \`{a} un sous-module de $\Delta\boxtimes \p1$, ce que l'on fera sans plus de commentaire. Tout \'{e}l\'{e}ment $z$ de $\Delta\boxtimes\p1$ peut alors s'\'{e}crire $z=z_1+w\cdot z_2$ avec $z_1,z_2\in \Delta$. Rappelons que
$\delta_D=\chi^{-1}\cdot \det V$ est vu comme caract\`{e}re de $\qpet$.
 L'action de ${\rm GL}_2(\qp)$ sur $\Delta=\Delta\boxtimes \zp$ est donn\'{e}e par les formules
suivantes:

a) $\left(\begin{smallmatrix} a & 0 \\0 & a\end{smallmatrix}\right)z=\delta_D(a)z$ et $\left(\begin{smallmatrix} p & 0 \\0 & 1\end{smallmatrix}\right)z=\varphi(z)$.

b) Si $a\in\zpet$, alors $\left(\begin{smallmatrix} a & 0 \\0 & 1\end{smallmatrix} \right)z=\sigma_a(z)$.

c) Pour tout $b\in p\zp$ on a $\left(\begin{smallmatrix} 1 & b \\0 & 1\end{smallmatrix}\right)z=(1+T)^b\cdot z$ et
$$\left(\begin{smallmatrix} 1 & 0 \\b & 1\end{smallmatrix}\right)z=w_D\left((1+T)^b\cdot w_D\left({\rm Res}_{\zpet}(z)\right)\right)+u_b\left({\rm Res}_{p\zp}(z)\right),$$ avec\footnote{ La formule de~\cite{Cbigone} p. 325 comporte quelques erreurs de frappe.}, pour $x\in\varphi(D)$,
  $$u_b(x)=\delta_D^{-1}(1-b)(1+T)^{\frac{1}{b-1}} w_D[(1+T)^{b(1-b)}\sigma_{(1-b)^2}(w_D((1+T)x))].$$

\begin{remark}
 Les formules donnant l'action du Borel sup\'{e}rieur sur $\Delta=\Delta\boxtimes \zp$ sont facile \`{a} manipuler et expliquent les formules simples pour l'action
 de ${\rm I}_2$, $u^+$ et $h$ dans le th\'{e}or\`{e}me \ref{main1}. Par contre, la formule donnant l'action de l'unipotent
 inf\'{e}rieur ne permet pas une d\'{e}duction facile de la formule pour $u^-$.

\end{remark}

\begin{remark}\label{explic}
 Les formules donnant l'action de ${\rm GL}_2(\qp)$ sont inspir\'{e}es
de celles obtenues en faisant agir ${\rm GL}_2(\qp)$ sur les mesures sur $\p1(\qp)$.
Rappelons que $L\otimes_{O_L} O_L[[T]]$ (resp. $\mathcal{R}^+=\mathcal{R}\cap L[[T]]$) s'identifie, gr\^{a}ce au th\'{e}or\`{e}me de Mahler (resp. Amice), \`{a} l'espace des mesures (resp. distributions) $\mu$ sur $\zp$, \`{a} valeurs dans $L$. L'identification se fait par la transform\'{e}e d'Amice $$\mu\to \mathcal{A}_{\mu}=\sum_{n\geq 0} \int_{\zp} \binom{x}{n}\mu\cdot T^n=\int_{\zp} (1+T)^x\mu.$$ Le mono\"{i}de $P^+$ agit sur l'espace de ces mesures (resp. distributions) par
$$\int_{\zp} \phi(x) g\cdot \mu=\int_{\zp} \phi(ax+b)\mu$$ si $\phi:\zp\to L$ est continue (resp. localement analytique) et $g=\left(\begin{smallmatrix} a & b \\0 & 1\end{smallmatrix}\right)$. Au niveau des transform\'{e}es d'Amice, l'action de $P^+$ sur l'espace des mesures (resp. distributions)
peut aussi s'\'{e}crire $$\mathcal{A}_{\left(\begin{smallmatrix} p^ka & b \\0 & 1\end{smallmatrix}\right)\mu}=(1+T)^b\cdot \varphi^k(\sigma_a(\mathcal{A}_{\mu})),$$ formule qui a un sens pour n'importe quel $(\varphi,\Gamma)$-module et qui a \'{e}t\'{e} utilis\'{e}e dans l'introduction pour d\'{e}finir une action de $P^+$ sur $\Delta$. Si $\mu$ est une mesure sur $\zp$ et $U$ est un ouvert compact de $\zp$, soit
${\rm Res}_U(\mu)$ la mesure telle que $\int_{\zp}\phi {\rm Res}_U(\mu)=\int_{\zp} 1_{U}\cdot \phi \mu$, o\`{u} $1_U$ est la fonction caract\`{e}ristique de
$U$. Si $a\in\zp$, $n\geq 0$ et $\Lambda\in \{L\otimes_{O_L} O_L[[T]],\mathcal{R}\}$, on d\'{e}finit ${\rm Res}_{a+p^n\zp}:\Lambda\to \Lambda$ par
${\rm Res}_{a+p^n\zp}=\left(\begin{smallmatrix} 1 & a \\0 & 1\end{smallmatrix}\right)\circ \varphi^n\circ \psi^n\circ
\left(\begin{smallmatrix} 1 & -a \\0 & 1\end{smallmatrix}\right)$. Tout ouvert compact $U$ de $\zp$ peut s'\'{e}crire comme une r\'{e}union disjointe finie
$U=\cup_{a\in I}(a+p^n\zp)$ et on pose ${\rm Res}_{U}=\sum_{a\in I} {\rm Res}_{a+p^n\zp}$, qui ne d\'{e}pend pas du choix de la d\'{e}composition.
On v\'{e}rifie alors que ${\rm Res}_{U}(\mathcal{A}_{\mu})=\mathcal{A}_{{\rm Res}_U(\mu)}$. Le faisceau \'{e}quivariant attach\'{e} \`{a} $D$ est l'analogue du faisceau des mesures sur $\zp$ et de l'action de $P^+$ que l'on vient de d\'{e}finir. L'espace $D\boxtimes\p1$
est alors l'analogue de l'espace des mesures sur
$\p1(\qp)$, muni de l'action de
${\rm GL}_2(\qp)$ d\'{e}finie par
 $$\int_{\p1(\qp)} \phi(x) g\cdot \mu =\int_{\p1(\qp)} \delta_D(cx+d)\phi\left(\frac{ax+b}{cx+d}\right)\mu$$
pour $g=\left(\begin{smallmatrix} a & b \\c & d\end{smallmatrix}\right)$ et
 $\phi:\zp\to L$ continue. L'espace $\Delta\boxtimes \zpet=\Delta^{\psi=0}$
correspond \`{a} l'espace des mesures \`{a} support dans $\zpet$ et l'involution $w_D$ de
$D\boxtimes\zpet$ correspond au niveau des mesures \`{a} la transform\'{e}e
$\mu\to w_D\cdot \mu$ d\'{e}finie par
  $$\mathcal{A}_{w_D\cdot \mu}=\int_{\zpet} \delta_D(x)(1+T)^{\frac{1}{x}}\mu.$$
 En approximant l'int\'{e}grale par des sommes de Riemann et developpement de Taylor
 \`{a} l'ordre $1$, on obtient une formule qui ne fait intervenir que l'action de
 $\varphi$, $\psi$ et $\Gamma$ sur $\mathcal{A}_{\mu}$. Cette formule a un sens pour tout
 $(\varphi,\Gamma)$-module $D$ et est donn\'{e}e par
    $$w_D(z)=\lim_{n\to\infty} \sum_{i\in \left(\mathbf{Z}/p^n\mathbf{Z}\right)^*} \delta_D(i)(1+T)^{\frac{1}{i}}\sigma_{-\frac{1}{i^2}}
    \varphi^n \psi^n ((1+T)^{-i}z)$$
pour $z\in D^{\psi=0}$. Les r\'{e}sultats g\'{e}n\'{e}raux de~\cite{Cmirab}, chap. V
montrent que la limite existe et que $w_D$ est une involution continue de
  $D^{\psi=0}$. La s\'{e}rie d\'{e}finissant $w_D(z)$ ne converge pas dans
  $D^{\dagger}$ ou $D_{\rm rig}$. Cependant, si $D$ est attach\'{e} \`{a} une repr\'{e}sentation
  de dimension $2$, Colmez d\'{e}montre\footnote{Ce r\'{e}sultat est hautement nontrivial et d\'{e}coule essentiellement
   de la construction de la correspondance de Langlands locale $p$-adique, par prolong\'{e}ment analytique \`{a} partir du cas cristallin.} (\cite{Cbigone}, lemme V.2.4 et proposition V.2.9) que $w_D$ pr\'{e}serve $D^{\dagger,\psi=0}$
   et se prolonge
  en une involution continue de $D_{\rm rig}^{\psi=0}$. Cela permet de faire les m\^{e}mes constructions pour $D_{\rm rig}$.

\end{remark}

\subsection{Actions infinit\'{e}simales sur le faisceau $U\to D_{\rm rig}\boxtimes U$}

 Si $m\geq 2$, soit $K_m=1+p^m M_2(\zp)$; c'est un groupe de Lie $p$-adique compact. Soit $\mathcal{D}(K_m)$
 l'alg\`{e}bre des distributions
  sur $K_m$ (voir~\cite{STJAMS}, ~\cite{STInv}), dual fort
   de l'alg\`{e}bre $C^{\rm an}(K_m,L)$ des fonctions localement analytiques sur $K_m$, \`{a} valeurs
   dans $L$. Soit $U(\mathfrak{gl_2})$ l'alg\`{e}bre enveloppante de l'alg\`{e}bre de Lie de ${\rm GL}_2(\qp)$. On dispose d'une inclusion $U(\mathfrak{gl_2})\subset \mathcal{D}(K_m,L)$, en voyant
les \'{e}l\'{e}ments
     de $U(\mathfrak{gl_2})$ comme des op\'{e}rateurs
     diff\'{e}rentiels sur $C^{\rm an}(K_m,L)$ et en \'{e}valuant en $1\in K_m$.

    \begin{proposition}
    \label{distrib}
    L'action de
    ${\rm GL}_2(\qp)$ sur $D_{{\rm rig}}\boxtimes \p1$ en fait un module sur
    $\mathcal{D}(K_m)$.
   \end{proposition}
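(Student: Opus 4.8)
The plan is to construct the $\mathcal{D}(K_m)$-module structure by integrating the locally analytic action of $K_m$ against distributions, following the formalism of Schneider--Teitelbaum~\cite{STInv}. Since $\mathcal{D}(K_m)$ is the strong dual of $C^{\rm an}(K_m,L)$, it suffices to show that for each $z\in D_{\rm rig}\boxtimes\p1$ the orbit map $o_z:K_m\to D_{\rm rig}\boxtimes\p1$, $g\mapsto g\cdot z$, is locally analytic; one then sets $\mu\cdot z=\mu(o_z)$ for $\mu\in\mathcal{D}(K_m)$, pairing the scalar distribution with the vector-valued locally analytic function $o_z$. Associativity of this action and its compatibility with the given $K_m$-action are formal consequences of the integration formula. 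Two points must be verified: that $D_{\rm rig}\boxtimes\p1$ is a space of compact type, so that the integration is well defined and separately continuous, and that the $K_m$-action is genuinely locally analytic.

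The compact type property is the easy part. The Robba ring $\mathcal{R}=\cup_n\mathcal{E}^{]0,r_n]}$ is an inductive limit of the nuclear Fr\'echet spaces $\mathcal{E}^{]0,r_n]}$ with compact transition maps, hence is of compact type; as $D_{\rm rig}$ is free of rank $2$ over $\mathcal{R}$ it inherits this, and $D_{\rm rig}\boxtimes\p1$, being the closed subspace of $D_{\rm rig}\times D_{\rm rig}$ defined by ${\rm Res}_{\zpet}(z_2)=w_D({\rm Res}_{\zpet}(z_1))$, is again of compact type.

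The core of the argument is local analyticity of the orbit maps, which I would reduce to a neighbourhood of the identity and to the factors of a product decomposition of $g\in K_m$ into lower unipotent, diagonal and upper unipotent parts, each depending analytically on $g$. By linearity, the relation $wK_mw^{-1}=K_m$, and the fact that $w$ acts on $D_{\rm rig}\boxtimes\p1$ as the continuous involution exchanging the two charts, local analyticity of all orbit maps reduces to that for the upper unipotent $N^+\cap K_m$ and the torus $T\cap K_m$ acting on $D_{\rm rig}$. Here the action is explicit: $b\mapsto\big(\begin{smallmatrix}1&b\\0&1\end{smallmatrix}\big)z=(1+T)^b z=\exp(bt)\,z$ is an analytic family of operators on $D_{\rm rig}$, while $a\mapsto\sigma_a(z)$ is locally analytic precisely because, by Berger~\cite{Ber}, the infinitesimal $\Gamma$-action is the connection $\nabla$, so $\sigma_a$ is an analytic function of $a$ near $1$; the central character $\delta_D$ supplies one more locally analytic factor. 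The lower unipotent is then handled through $\big(\begin{smallmatrix}1&0\\c&1\end{smallmatrix}\big)z=w\big(\big(\begin{smallmatrix}1&c\\0&1\end{smallmatrix}\big)(w\cdot z)\big)$, a composition of an upper-unipotent orbit map with the continuous linear map $w$.

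The main obstacle is making this last reduction rigorous on the compact-type space $D_{\rm rig}^{\psi=0}=D_{\rm rig}\boxtimes\zpet$. The series defining $w_D$ in Remark~\ref{explic} does \emph{not} converge in $D_{\rm rig}$, and it is only Colmez's nontrivial theorem (\cite{Cbigone}, lemme V.2.4, proposition V.2.9), obtained by analytic continuation from the crystalline case, that guarantees that $w_D$ extends to a continuous involution of $D_{\rm rig}^{\psi=0}$. This is exactly what ensures that $w$, and hence the lower unipotent, acts by continuous operators and that the orbit maps remain inside $D_{\rm rig}\boxtimes\p1$. Granting this input, combining the three analytic factors yields local analyticity of the full $K_m$-action, and the theorem of Schneider--Teitelbaum upgrades the continuous locally analytic representation $D_{\rm rig}\boxtimes\p1$ of $K_m$ to a separately continuous $\mathcal{D}(K_m)$-module, which is the assertion of the proposition (recovering the statement of Colmez quoted in the footnote).
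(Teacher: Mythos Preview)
Your strategy---show orbit maps are locally analytic and invoke the Schneider--Teitelbaum integration---is sound, and the paper itself does not prove this independently: its proof is a one-line citation of \cite[lemme V.2.15 and remarque V.2.19]{Cbigone}, passing to the inductive limit over the Fr\'echet pieces. So you are attempting something genuinely different. The gap is in your treatment of the lower unipotent.

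You claim that local analyticity for $N^-\cap K_m$ follows from that for $N^+\cap K_m$ via the identity $\left(\begin{smallmatrix}1&0\\c&1\end{smallmatrix}\right)z = w\big(\left(\begin{smallmatrix}1&c\\0&1\end{smallmatrix}\right)(w\cdot z)\big)$. But this is circular. If $z\in D_{\rm rig}=D_{\rm rig}\boxtimes\zp$, then $w\cdot z\in D_{\rm rig}\boxtimes w\zp$, and the action of the upper unipotent on this second chart is, \emph{by definition of the sheaf}, given (after transporting back via $w$) by the lower unipotent formula on $D_{\rm rig}$. Equivalently, if you instead take $z\in D_{\rm rig}\boxtimes\p1$ and decompose $w\cdot z=z_1'+w\cdot z_2'$, the upper-unipotent orbit of the second summand again reduces to the lower-unipotent orbit of $z_2'\in D_{\rm rig}$. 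Either way you have not left the problem you started with; conjugation by $w$ exchanges the two difficulties rather than eliminating one.

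What actually works is to use the explicit formula for $\left(\begin{smallmatrix}1&0\\b&1\end{smallmatrix}\right)$ on $D_{\rm rig}$ recalled in \S\ref{involution}: it is built out of ${\rm Res}_{\zpet}$, ${\rm Res}_{p\zp}$, $w_D$, multiplication by $(1+T)^{f(b)}$ for various rational functions $f$ analytic near $0$, the character $\delta_D^{-1}(1-b)$, and $\sigma_{(1-b)^2}$. Each ingredient is either continuous linear (independent of $b$) or analytic in $b$---the last by Berger's result that the $\Gamma$-action on $D_{\rm rig}$ is locally analytic---so the composite is analytic in $b$. This is the step Colmez carries out with explicit estimates at each Fr\'echet level in \cite[lemme V.2.15]{Cbigone}, which is why the paper simply cites him; your argument needs this ingredient rather than the $w$-conjugation shortcut.
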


\begin{proof}
  Passer \`{a} la limite inductive dans le lemme V.2.15 de~\cite{Cbigone} ou contempler
  le dernier isomorphisme de la remarque V.2.19 de~\cite{Cbigone}. Dans loc.cit., $m$
  est suppos\'{e} assez grand, mais comme les $K_m$ sont commensurables, cela ne change rien.
\end{proof}

\begin{lemma}
\label{distrlocal}
 Soit $H\subset {\rm GL}_2(\qp)$ un sous-groupe ouvert compact
qui stabilise l'ouvert compact $U\subset \p1(\qp)$. Alors
$D_{\rm rig}\boxtimes U\subset D_{\rm rig}\boxtimes \p1$
 est stable par $\mathcal{D}(H)$ et on a ${\rm Res}_U(\lambda\cdot z)=\lambda\cdot {\rm Res}_U(z)$
 pour tout $z\in D_{\rm rig}\boxtimes \p1$ et tout $\lambda\in \mathcal{D}(H)$.

\end{lemma}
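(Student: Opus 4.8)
The statement has two parts: stability of $D_{\rm rig}\boxtimes U$ under $\mathcal{D}(H)$, and the commutation ${\rm Res}_U(\lambda\cdot z)=\lambda\cdot{\rm Res}_U(z)$. The plan is to deduce both from the equivariance of the sheaf and from Proposition~\ref{distrib}, reducing the distribution-algebra statement to the group-level statement by an approximation argument. First I would recall that, by construction of the ${\rm GL}_2(\qp)$-equivariant sheaf on $\p1(\qp)$, for every $g\in H$ one has ${\rm Res}_{gU}(g\cdot z)=g\cdot{\rm Res}_U(z)$; since $H$ stabilizes $U$ by hypothesis, this gives ${\rm Res}_U(g\cdot z)=g\cdot{\rm Res}_U(z)$ for all $g\in H$ and all $z\in D_{\rm rig}\boxtimes\p1$. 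In particular $g$ preserves the subspace $D_{\rm rig}\boxtimes U$ (characterized as the image of the extension-by-zero map, or equivalently as $\{z : {\rm Res}_U(z)=z\}$), so $H$ itself acts on $D_{\rm rig}\boxtimes U$.

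The key step is to upgrade this from the group $H$ to the distribution algebra $\mathcal{D}(H)$. By Proposition~\ref{distrib}, $D_{\rm rig}\boxtimes\p1$ is already a $\mathcal{D}(K_m)$-module (for $m$ large, and one may take $H\subset K_m$ after shrinking, using commensurability as in the proof of Proposition~\ref{distrib}); the module structure is continuous and compatible with the $H$-action, meaning that the Dirac distributions $\delta_g$ for $g\in H$ act exactly as $g$. Since the Dirac distributions span a dense subspace of $\mathcal{D}(H)$ in its natural (weak or strong) topology, and since both sides of the desired identity
\[
{\rm Res}_U(\lambda\cdot z)=\lambda\cdot{\rm Res}_U(z)
\]
are continuous in $\lambda\in\mathcal{D}(H)$ for fixed $z$ (the left side because ${\rm Res}_U$ is continuous and the module action is continuous, the right side likewise), the identity holds for all $\lambda$ once it holds for all $\delta_g$. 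But for $\lambda=\delta_g$ it is precisely the group-level equivariance ${\rm Res}_U(g\cdot z)=g\cdot{\rm Res}_U(z)$ established above. Stability of $D_{\rm rig}\boxtimes U$ under $\mathcal{D}(H)$ then follows: for $z$ with ${\rm Res}_U(z)=z$ one gets ${\rm Res}_U(\lambda\cdot z)=\lambda\cdot z$, so $\lambda\cdot z$ again lies in $D_{\rm rig}\boxtimes U$.

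The main obstacle I anticipate is purely topological: one must verify that the $\mathcal{D}(H)$-action is separately continuous in $\lambda$ and that ${\rm Res}_U$ is continuous for the relevant topology on $D_{\rm rig}\boxtimes\p1$, so that the density of Dirac distributions can be leveraged. This is not automatic and relies on the continuity properties of Colmez's sheaf together with the continuity of the module structure furnished by the passage to the limit in Lemma~V.2.15 of~\cite{Cbigone}. Once these continuity facts are in hand, the argument is a standard density-plus-continuity reduction, and the genuinely nontrivial input is entirely contained in Proposition~\ref{distrib} and the equivariance of the restriction maps, both of which may be invoked directly.
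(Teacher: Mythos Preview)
Your proposal is correct and follows essentially the same approach as the paper's proof: establish the identity for Dirac masses by ${\rm GL}_2(\qp)$-equivariance of ${\rm Res}_U$ together with $h(U)=U$, then pass to all of $\mathcal{D}(H)$ by continuity and the density of $L[H]$ in $\mathcal{D}(H)$ (the paper cites \cite{STJAMS}, lemme 3.1 for this density). The only difference is that the paper's proof is terser and does not spell out the topological verifications you flag as the main obstacle; these are indeed routine given the continuity input from Proposition~\ref{distrib}.
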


\begin{proof}
  Si $\lambda\in L[H]$ est une combinaison lin\'{e}aire de masses de Dirac, cela
 est \'{e}vident par ${\rm GL}_2(\qp)$-\'{e}quivariance de ${\rm Res}_U$ et le fait que
 $h(U)=U$ pour $h\in H$. Le r\'{e}sultat suit alors par continuit\'{e} et densit\'{e}
 de $L[H]$ dans $\mathcal{D}(H)$~(\cite{STJAMS}, lemme 3.1).
\end{proof}

 Le lemme pr\'{e}c\'{e}dent induit une action de $\mathfrak{gl_2}$ sur $D_{\rm rig}\boxtimes \zp=D_{\rm rig}$, qui satisfait ${\rm Res}_{U}(X\cdot z)=X\cdot {\rm Res}_{U}(z)$ pour $z\in D_{\rm rig}\boxtimes \p1$, $X\in \mathfrak{gl_2}$ et $U\subset \zp$ ouvert compact.

\subsection{L'action de l'\'{e}l\'{e}ment de Casimir}

   Rappelons que l'on utilise la base $$I_2=\left(\begin{smallmatrix} 1 & 0 \\0 & 1\end{smallmatrix}\right),\quad h=\left(\begin{smallmatrix} 1 & 0 \\0 & -1\end{smallmatrix}\right),\quad
   u^+=\left(\begin{smallmatrix} 0 & 1 \\0 & 0\end{smallmatrix}\right), \quad u^-=\left(\begin{smallmatrix} 0 & 0 \\1 & 0\end{smallmatrix}\right)$$
   de $\mathfrak{gl_2}$.
 Pour la d\'{e}finition des poids de Hodge-Tate g\'{e}n\'{e}ralis\'{e}s d'une $L$-repr\'{e}sentation
 $V$ et du polyn\^{o}me $P_{\rm Sen,V}$, voir la partie \ref{FontaineSen}.

  \begin{proposition}
\label{trivial}
   Si $k$ est la somme des poids de Hodge-Tate g\'{e}n\'{e}ralis\'{e}s de $V$, alors pour tout $z\in D_{\rm rig}$ on a $I_2(z)=(k-1)z$, $u^+(z)=tz$ et $h(z)=(1-k)z+2\nabla z$.

  \end{proposition}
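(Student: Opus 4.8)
The plan is to derive all three formulas directly from the explicit action of the upper Borel on $D_{\rm rig}=D_{\rm rig}\boxtimes\zp$, together with the compatibility ${\rm Res}_U(X\cdot z)=X\cdot{\rm Res}_U(z)$ established just before the proposition. The key point is that each of $I_2$, $u^+$, $h$ lies in the Lie algebra of the \emph{upper} Borel subgroup, and for that subgroup Colmez's formulas (items a)--c) of Section \ref{involution}) are simple multiplication and $\varphi/\sigma_a$ operators, so the infinitesimal action can be computed by honestly differentiating one-parameter subgroups.

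First I would treat $I_2$. The formula $\big(\begin{smallmatrix}a&0\\0&a\end{smallmatrix}\big)z=\delta_D(a)z$ gives a one-parameter action through the central character $\delta_D=\chi^{-1}\cdot\det V$, so the infinitesimal generator is multiplication by the derivative $\delta_D'(1)$ at the identity. Here I must translate $\delta_D$, viewed as a character of $\qpet$ via local class field theory, into its derivative: since $\det V$ has generalized Hodge--Tate weight equal to the sum $a+b=k$ of the weights of $V$, and $\chi$ contributes weight $1$ (recall the footnote convention that $\chi$ corresponds to $x\mapsto x|x|_p$), the logarithmic derivative of $\delta_D$ is $k-1$. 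This yields $I_2(z)=(k-1)z$.

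Next, for $u^+$ I would use item c): $\big(\begin{smallmatrix}1&b\\0&1\end{smallmatrix}\big)z=(1+T)^b\cdot z$ for $b\in p\zp$. Differentiating the one-parameter subgroup $b\mapsto(1+T)^b=\exp(b\log(1+T))$ at $b=0$ gives multiplication by $\log(1+T)=t$, so $u^+(z)=tz$. For $h$ I would combine item a) and item b): write $h$ as the generator coming from the diagonal torus $\big(\begin{smallmatrix}a&0\\0&a^{-1}\end{smallmatrix}\big)$. Using $\big(\begin{smallmatrix}a&0\\0&1\end{smallmatrix}\big)z=\sigma_a(z)$ for $a\in\zpet$ and the central action to handle the second diagonal entry, the infinitesimal generator of $a\mapsto\sigma_a$ is precisely the connection $\nabla$ of Section \ref{FontaineSen}, while the central part contributes a multiple of $I_2$. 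Assembling $h=\big(\begin{smallmatrix}1&0\\0&-1\end{smallmatrix}\big)$ as a combination of the torus generator and the scalar generator, and using $I_2(z)=(k-1)z$ together with the fact that differentiating $\sigma_a$ gives $\nabla$ acting twice (once for each of the two diagonal entries, with opposite signs), I obtain $h(z)=2\nabla z-(k-1)z=(1-k)z+2\nabla z$.

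The main obstacle, and the only step requiring genuine care rather than routine differentiation, is the \textbf{normalization of the central character $\delta_D$ and its interaction with the Hodge--Tate/Sen weights}. One must be scrupulous about the convention identifying $\delta_D$ with a character of $\qpet$ (the footnote convention $\chi\leftrightarrow x|x|_p$), about the sign conventions in Remark \ref{moduleFontaine}b) relating Hodge--Tate weights to jumps of the filtration, and about the precise relation between the infinitesimal action of $\Gamma$ (through $a\mapsto\sigma_a$, generator $\nabla$) and the chosen basis vectors $h,I_2$ of the Cartan. Once these bookkeeping choices are pinned down so that the Sen polynomial has roots $a,b$ with $a+b=k$, the three formulas fall out; the delicate formula for $u^-$ (which is \emph{not} claimed here and is deferred, as the remark after item c) warns) is exactly the one that cannot be read off the Borel and will require the Casimir argument of the following subsections.
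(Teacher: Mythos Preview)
Your proposal is correct and follows essentially the same approach as the paper: differentiate the explicit upper-Borel formulas of Section~\ref{involution}, using the central character $\delta_D$ (whose derivative at $1$ is $k-1$ because $\det V$ has generalized Hodge--Tate weight $k$) for $I_2$, the formula $(1+T)^b z$ for $u^+$, and the decomposition of $\big(\begin{smallmatrix}a&0\\0&a^{-1}\end{smallmatrix}\big)$ via the center and $\big(\begin{smallmatrix}a&0\\0&1\end{smallmatrix}\big)=\sigma_a$ for $h$. The paper carries out exactly this computation, writing $\big(\begin{smallmatrix}a&0\\0&a^{-1}\end{smallmatrix}\big)z=\delta_D^{-1}(a)\sigma_{a^2}(z)$ and differentiating; your identification of the central-character normalization as the only delicate point is also precisely where the paper spends its one sentence of justification.
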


\begin{proof} La ${\rm GL}_2(\qp)$-repr\'{e}sentation $D_{\rm rig}\boxtimes \p1$ a pour caract\`{e}re central
$\delta_D=\chi^{-1}\cdot \det V$, qui, en tant que caract\`{e}re galoisien, a poids de Hodge-Tate g\'{e}n\'{e}ralis\'{e} $k-1$ (car $\det V$ a poids de Hodge-Tate $k$). Donc, en tant que caract\`{e}re
de $\qpet$, la d\'{e}riv\'{e}e en $1$ de $\delta_D$ est $k-1$. La formule $I_2(z)=(k-1)z$
s'en d\'{e}duit.
Comme $\left(\begin{smallmatrix} 1 & b \\0 & 1\end{smallmatrix}\right)z=(1+T)^b z$ pour $z\in D_{{\rm rig}}$ et $b\in p\zp$, on a
$$u^+(z)=\lim_{b\to 0}\frac{(1+T)^b-1}{b}z=tz$$
pour tout $z\in D_{\rm rig}$.
 Le caract\`{e}re central de $D_{\rm rig}\boxtimes\p1$ \'{e}tant $\delta_D$, un calcul imm\'{e}diat
donne
  $$\left(\begin{smallmatrix} a & 0 \\0 & a^{-1}\end{smallmatrix}\right)z=
  \delta_D^{-1}(a)\sigma_{a^2}(z).$$ Il s'ensuit que
  pour $z\in D_{{\rm rig}}$ on a
 $$h(z)=
 \lim_{a\to 1} \frac{\delta_{D}^{-1}(a)\sigma_{a^2}z-z}{a-1}=(1-k)z+2\nabla z,$$
 la derni\`{e}re \'{e}galit\'{e} \'{e}tant une cons\'{e}quence du fait que la d\'{e}riv\'{e}e de $\delta_D$ en $1$
 est $k-1$, comme on l'a expliqu\'{e} ci-dessus.

\end{proof}

 L'\'{e}l\'{e}ment de Casimir
    $$C=u^+u^-+u^-u^++\frac{1}{2}h^2$$ engendre le centre
    de $U(\mathfrak{sl_2})$. D'apr\`{e}s le lemme \ref{distrlocal} et la remarque qui le suit, on peut
    voir $C$ comme endomorphisme $L$-lin\'{e}aire continu de $D_{\rm rig}=D_{\rm rig}\boxtimes \zp$.

\begin{lemma}\label{Casimir}
 Soient $a$ et $b$ les poids de Hodge-Tate g\'{e}n\'{e}ralis\'{e}s de $V$, de telle sorte que
 $P_{\rm Sen, V}(X)=(X-a)(X-b)$. On a une \'{e}galit\'{e} d'op\'{e}rateurs sur $D_{\rm rig}$
  $$C=2tu^-+2P_{\rm Sen,V}(\nabla)+\frac{(a-b)^2-1}{2}.$$

\end{lemma}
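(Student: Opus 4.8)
The plan is to compute $C$ directly from its definition, using the formulas for the action of $h$, $u^+$, and $u^-$ established (or about to be established) on $D_{\rm rig}$. We have $C = u^+u^- + u^-u^+ + \frac{1}{2}h^2$ as an operator, and Proposition~\ref{trivial} already gives us $u^+(z) = tz$ and $h(z) = (1-k)z + 2\nabla z$, where $k = a+b$ is the sum of the Hodge-Tate weights. The goal is to re-express $C$ in terms of $u^-$, $\nabla$, and $t$ so that the statement $C = 2tu^- + 2P_{\rm Sen,V}(\nabla) + \frac{(a-b)^2-1}{2}$ drops out. Note this lemma is \emph{logically prior} to knowing the explicit formula for $u^-$: we do not yet have a closed form for $u^-(z)$, so I would treat $u^-$ as an abstract operator and carry it symbolically through the computation.

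\textbf{Key steps.} First I would compute $\frac{1}{2}h^2$. Since $h$ acts as $z \mapsto (1-k)z + 2\nabla z$, and $\nabla$ is $L$-linear, $h^2(z) = (1-k)^2 z + 4(1-k)\nabla z + 4\nabla^2 z$, giving $\frac{1}{2}h^2 = 2\nabla^2 + 2(1-k)\nabla + \frac{(1-k)^2}{2}$. Next I would handle the commutator term $u^+u^- + u^-u^+ = 2u^+u^- + [u^-,u^+]$. In $\mathfrak{sl_2}$ we have $[u^+,u^-] = h$, hence $[u^-,u^+] = -h$. Since $u^+$ acts as multiplication by $t$, $u^+u^-(z) = t\cdot u^-(z)$, so $2u^+u^- = 2tu^-$ as an operator. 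Therefore $u^+u^- + u^-u^+ = 2tu^- - h = 2tu^- - (1-k)\,\mathrm{id} - 2\nabla$. The crucial point is that the bracket relation $[u^-, u^+] = -h$ holds for the induced action on $D_{\rm rig}$ because this action comes from the Lie algebra map $\mathfrak{gl_2} \hookrightarrow \mathcal{D}(K_m)$ of Proposition~\ref{distrib} composed with the action on $D_{\rm rig}\boxtimes\p1$ and Lemma~\ref{distrlocal}; Lie brackets are preserved by such a homomorphism.

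\textbf{Assembling.} Adding the two pieces yields
$$C = 2tu^- - (1-k)\,\mathrm{id} - 2\nabla + 2\nabla^2 + 2(1-k)\nabla + \frac{(1-k)^2}{2}.$$
It remains to recognize the $\nabla$-part as $2P_{\rm Sen,V}(\nabla)$ plus a constant. Since $P_{\rm Sen,V}(X) = (X-a)(X-b) = X^2 - kX + ab$, we have $2P_{\rm Sen,V}(\nabla) = 2\nabla^2 - 2k\nabla + 2ab$. Comparing coefficients of $\nabla$: I need $-2 + 2(1-k) = -2k$, which holds. The constant terms must then combine: $-(1-k) + \frac{(1-k)^2}{2} - 2ab$ should equal $\frac{(a-b)^2-1}{2}$. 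I would verify this final scalar identity by substituting $k = a+b$ and simplifying $(1-k)^2 = (1-a-b)^2$ against $(a-b)^2$; this is the one genuinely numerical check in the argument.

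\textbf{Main obstacle.} The computation itself is routine once the operator formulas are in hand; the only subtle point is the legitimacy of manipulating $C$ as a composition of the operators $h$, $u^+$, $u^-$ on $D_{\rm rig}$ rather than on the full $D_{\rm rig}\boxtimes\p1$. I would justify this by invoking the remark following Lemma~\ref{distrlocal}, which guarantees that $\mathfrak{gl_2}$ acts on $D_{\rm rig} = D_{\rm rig}\boxtimes\zp$ compatibly with restriction, so that associativity of the $U(\mathfrak{gl_2})$-action and the bracket relation $[u^-,u^+]=-h$ are valid on $D_{\rm rig}$; once that is granted, the final scalar-matching calculation is the only thing to carry out.
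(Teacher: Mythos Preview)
Your proposal is correct and follows essentially the same approach as the paper: both use the bracket relation $[u^+,u^-]=h$ to rewrite $C=2u^+u^-+\tfrac{1}{2}h^2-h$, then substitute $u^+=t$ and $h=2\nabla+1-k$ from Proposition~\ref{trivial} and simplify. The paper's computation is slightly slicker (it notes $h-1=2\nabla-k$ and completes the square to get $\tfrac{1}{2}h^2-h=\tfrac{(2\nabla-k)^2-1}{2}$ directly), but this is only a cosmetic difference in how the algebra is organized.
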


\begin{proof}
 La proposition \ref{trivial} montre qu'en tant qu'op\'{e}rateurs sur $D_{\rm rig}$ on a
 $h=2\nabla+1-k$ et $u^+(z)=tz$, o\`{u} $k=a+b$. Un calcul
 imm\'{e}diat montre que
  $$\frac{1}{2}h^2-h=\frac{(2\nabla-k)^2-1}{2}=2P_{\rm Sen,V}(\nabla)+\frac{(a-b)^2-1}{2}.$$
 Le r\'{e}sultat suit alors de l'\'{e}galit\'{e} $C=2u^+u^-+\frac{1}{2}h^2-h$, d\'{e}duite de $u^+u^--u^-u^+=h$.

\end{proof}

  On est maintenant en mesure de d\'{e}montrer le
  th\'{e}or\`{e}me principal de l'article. On garde les notations du lemme
  \ref{Casimir}.

  \begin{theorem}\label{Casimirinf}
   Pour tout $z\in D_{\rm rig}$ on a $u^-(z)=-\frac{P_{\rm Sen, V}(\nabla)(z)}{t}$
   et $C(z)=\frac{(a-b)^2-1}{2}\cdot z$.
  \end{theorem}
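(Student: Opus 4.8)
The key is to show that the Casimir operator $C$, already known to act on $D_{\rm rig}$ (via the $\mathcal{D}(K_m)$-module structure and Lemma \ref{distrlocal}), commutes with both $\varphi$ and the $\Gamma$-action, hence is an endomorphism of the $(\varphi,\Gamma)$-module $D_{\rm rig}$; by Remark \ref{pleinefidelite} together with the absolute irreducibility of $V$, we get ${\rm End}_{\varphi,\Gamma,\mathcal{R}}(D_{\rm rig}) = {\rm End}_{L[{\rm Gal}]}(V) = L$, so $C$ acts by a scalar. The first step is therefore to verify that $C$ is $(\varphi,\Gamma)$-equivariant: since $C$ is central in $U(\mathfrak{sl_2})$, this should follow from the fact that $\varphi$ and $\sigma_a$ correspond to the action of the diagonal/Borel elements $\big(\begin{smallmatrix}p&0\\0&1\end{smallmatrix}\big)$ and $\big(\begin{smallmatrix}a&0\\0&1\end{smallmatrix}\big)$ on $D_{\rm rig}\boxtimes\zp$, which normalize the relevant structure, so centrality of $C$ forces commutation.

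Once we know $C$ acts by a scalar $\lambda$, we identify $\lambda$ using Lemma \ref{Casimir}, which gives the operator identity $C = 2tu^- + 2P_{\rm Sen,V}(\nabla) + \frac{(a-b)^2-1}{2}$ on $D_{\rm rig}$. The decisive input is Proposition \ref{HC}: since $P_{\rm Sen,V}(\nabla)$ maps $D_{\rm rig}$ into $t\cdot D_{\rm rig}$, we may write $P_{\rm Sen,V}(\nabla)(z) = t\cdot y$ for some $y \in D_{\rm rig}$, and the term $2tu^-(z) + 2ty$ lies in $tD_{\rm rig}$. Thus $C(z) - \frac{(a-b)^2-1}{2}z \in tD_{\rm rig}$ for all $z$. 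But $C(z) = \lambda z$, so $(\lambda - \frac{(a-b)^2-1}{2})z \in tD_{\rm rig}$ for every $z \in D_{\rm rig}$; since $D_{\rm rig}$ is a free $\mathcal{R}$-module and $t$ is not a unit, this forces $\lambda = \frac{(a-b)^2-1}{2}$. This yields the second formula $C(z) = \frac{(a-b)^2-1}{2}z$.

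Finally, to extract the formula for $u^-$, I substitute the now-known scalar value of $C$ back into the identity of Lemma \ref{Casimir}:
\[
\frac{(a-b)^2-1}{2}z = 2tu^-(z) + 2P_{\rm Sen,V}(\nabla)(z) + \frac{(a-b)^2-1}{2}z,
\]
which collapses to $2tu^-(z) = -2P_{\rm Sen,V}(\nabla)(z)$, i.e.\ $tu^-(z) = -P_{\rm Sen,V}(\nabla)(z)$. Since $t$ is a nonzerodivisor in $\mathcal{R}$ and $P_{\rm Sen,V}(\nabla)(z) \in tD_{\rm rig}$ by Proposition \ref{HC}, the element $\frac{P_{\rm Sen,V}(\nabla)(z)}{t}$ is a well-defined element of $D_{\rm rig}$, and we conclude $u^-(z) = -\frac{P_{\rm Sen,V}(\nabla)(z)}{t}$.

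\textbf{Main obstacle.} The genuinely nontrivial point is the very first step: rigorously establishing that $C$ preserves $D_{\rm rig} = D_{\rm rig}\boxtimes\zp$ and is $(\varphi,\Gamma)$-equivariant. Preservation is already delivered by Lemma \ref{distrlocal}, so the real content is the equivariance — one must check that commuting $C$ past $\varphi$ and $\sigma_a$ is legitimate at the level of the $\mathcal{D}(K_m)$-action, which requires care because $\varphi$ corresponds to a group element not in the compact $K_m$. The cleanest route is to exploit the centrality of $C$ in $U(\mathfrak{gl_2})$ directly: for any $g$ in the image monoid, conjugation by $g$ fixes $C$, and the compatibility ${\rm Res}_U(g\cdot z) = g\cdot{\rm Res}_U(z)$ lets one transfer this to the operators $\varphi$ and $\sigma_a$ on $D_{\rm rig}$. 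Everything after this reduction is formal manipulation combined with Proposition \ref{HC}.
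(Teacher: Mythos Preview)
Your overall strategy matches the paper's, but there is a genuine gap at the first step. To invoke Remark~\ref{pleinefidelite} and conclude that $C$ lies in ${\rm End}_{\varphi,\Gamma,\mathcal{R}}(D_{\rm rig})={\rm End}_{L[{\rm Gal}(\overline{\qp}/\qp)]}(V)=L$, you need $C$ to be an $\mathcal{R}$-\emph{linear} endomorphism of $D_{\rm rig}$, not merely an $L$-linear map commuting with $\varphi$ and $\Gamma$. You only argue the latter: centrality of $C$ in $U(\mathfrak{gl_2})$ gives commutation with the group elements $\left(\begin{smallmatrix}p&0\\0&1\end{smallmatrix}\right)$ and $\left(\begin{smallmatrix}a&0\\0&1\end{smallmatrix}\right)$, hence with $\varphi$ and $\sigma_a$, but this says nothing about $\mathcal{R}$-linearity. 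A continuous $L$-linear map on $D_{\rm rig}$ commuting with $\varphi$ and $\Gamma$ need not be $\mathcal{R}$-linear a priori, and Remark~\ref{pleinefidelite} does not apply without that.

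The paper fills this gap by also using the unipotent part of the Borel: since $\left(\begin{smallmatrix}1&m\\0&1\end{smallmatrix}\right)$ acts on $D_{\rm rig}$ by multiplication by $(1+T)^m$, centrality of $C$ gives $C((1+T)^m z)=(1+T)^m C(z)$ for all $m\in\mathbf{N}$. By $L$-linearity this extends to $C(f(T)z)=f(T)C(z)$ for $f\in L[T]$, then to $f\in L(T)$, and finally to all $f\in\mathcal{R}$ by density of $L(T)$ in $\mathcal{R}$ and continuity of $C$. Only then does one have $C\in{\rm End}_{\varphi,\Gamma,\mathcal{R}}(D_{\rm rig})$. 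Once this is established, your argument from Lemma~\ref{Casimir} and Proposition~\ref{HC} onward is correct and coincides with the paper's.
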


\begin{proof}

  Comme $C$ commute \`{a} l'action adjointe de ${\rm GL}_2(\qp)$, les formules
  donnant l'action de ${\rm GL}_2(\qp)$ sur $D_{\rm rig}\boxtimes \p1$ montrent
  que l'op\'{e}rateur $C:D_{\rm rig}\to D_{\rm rig}$ commute \`{a}
  $\varphi$ et $\Gamma$ et satisfait $C((1+T)^m\cdot z)=(1+T)^m\cdot C(z)$ pour tout
  $m\in\mathbf{N}$ et tout $z\in D_{\rm rig}$. Par lin\'{e}arit\'{e}, on obtient
  $C(f(T)z)=f(T)C(z)$ pour tout $f\in L(T)$ et\footnote{Noter que si $z\in D_{\rm rig}$ et $P\in L[T]$ est non nul,
  alors $\frac{1}{P(T)}\cdot z\in D_{\rm rig}$.} tout $z\in D_{\rm rig}$. Or, il est facile
  de voir que $L(T)$
  est dense dans $\mathcal{R}$ (tout $f=\sum_{n\in\mathbf{Z}} a_n\cdot T^n\in \mathcal{R}$
  est la limite dans $\mathcal{R}$ de la suite $\sum_{|n|\leq N} a_n\cdot T^n$). On en d\'{e}duit que $C$ est $\mathcal{R}$-lin\'{e}aire,
  i.e. $C\in {\rm End}_{\varphi,\Gamma,\mathcal{R}}(D_{\rm rig})={\rm End}_{L[{\rm Gal}(\overline{\qp}/\qp)]}(V)$ (remarque
  \ref{pleinefidelite}). Comme $V$ est absolument irr\'{e}ductible,
  on conclut que $C$ est scalaire.

   La proposition \ref{HC} combin\'{e}e au lemme \ref{Casimir} montrent que
   $C-\frac{(a-b)^2-1}{2}$ envoie $D_{\rm rig}$ dans $t\cdot D_{\rm rig}$.
   Comme de plus $C-\frac{(a-b)^2-1}{2}$ est scalaire, d'apr\`{e}s la discussion pr\'{e}c\'{e}dente, on obtient finalement $C=\frac{(a-b)^2-1}{2}$. La formule pour
   $u^-$ suit alors du lemme \ref{Casimir}.

\end{proof}

 On finit par un corollaire concernant l'action
  de l'involution $w_D$ sur $D_{{\rm rig}}\boxtimes \zpet$.

\begin{corollary}
\label{strange}
 Pour tout $z\in D_{{\rm rig}}\boxtimes \zpet$ on a
 $$w_D(tz)=-\frac{P_{\rm Sen,V}(\nabla)(w_D(z))}{t}.$$
\end{corollary}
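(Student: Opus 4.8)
The plan is to deduce the corollary directly from the operator formulas of Theorem \ref{Casimirinf} and Proposition \ref{trivial}, using nothing more than the equivariance of the infinitesimal action under conjugation by $w$. The key observation is that, once rewritten via those formulas, the corollary is \emph{exactly} the operator identity $w_D\circ u^+ = u^-\circ w_D$ restricted to $D_{\rm rig}\boxtimes\zpet$.

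First I would record the trivial matrix computation $\mathrm{Ad}(w)(u^+)=w u^+ w^{-1}=u^-$, immediate from $w=\left(\begin{smallmatrix}0&1\\1&0\end{smallmatrix}\right)$ and $w^2=\mathrm{I}_2$. Since the $\mathfrak{gl_2}$-action on $D_{\rm rig}\boxtimes\p1$ is obtained by differentiating the $\mathrm{GL}_2(\qp)$-action (Proposition \ref{distrib}), and since $w$ normalizes $K_m=1+p^mM_2(\zp)$ (conjugation by $w$ merely permutes matrix entries, hence induces an automorphism of $\mathcal{D}(K_m)$ intertwined with $\mathrm{Ad}$), one obtains the operator identity $w\circ X=(\mathrm{Ad}(w)X)\circ w$ on $D_{\rm rig}\boxtimes\p1$ for every $X\in\mathfrak{gl_2}$. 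In particular $w\circ u^+=u^-\circ w$ as endomorphisms of $D_{\rm rig}\boxtimes\p1$, where I use that $w^{-1}=w$.

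Next I would verify that the identity restricts correctly to $D_{\rm rig}\boxtimes\zpet=D_{\rm rig}^{\psi=0}$. For $z\in D_{\rm rig}^{\psi=0}$, Proposition \ref{trivial} gives $u^+(z)=tz$; moreover $tz$ again lies in $D_{\rm rig}^{\psi=0}$, for $t=\tfrac1p\varphi(t)$ (as $\varphi(t)=pt$) and the relation $\psi(\varphi(f)z)=f\psi(z)$ yield $\psi(tz)=\tfrac1p\,t\,\psi(z)=0$. Since $w$ preserves $\zpet$, its restriction to $D_{\rm rig}\boxtimes\zpet$ is precisely $w_D$, so both $w(tz)=w_D(tz)$ and $w(z)=w_D(z)$ are defined and land in $D_{\rm rig}\boxtimes\zpet\subset D_{\rm rig}$. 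Applying $w\circ u^+=u^-\circ w$ to $z$ then gives $w_D(tz)=u^-(w_D(z))$, and Theorem \ref{Casimirinf} evaluates the right-hand side (legitimate since $w_D(z)\in D_{\rm rig}$) as $-\frac{P_{\rm Sen,V}(\nabla)(w_D(z))}{t}$, which is exactly the assertion.

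The only point requiring genuine care is the equivariance step, i.e. justifying that the formal rule $g(X\cdot v)=(\mathrm{Ad}(g)X)(g\cdot v)$ is valid here for $g=w$ and $v\in D_{\rm rig}\boxtimes\p1$. This rests on the fact that $w$ normalizes the small compact open subgroup $K_m$, so that it acts on $\mathcal{D}(K_m)$ compatibly with the module structure of Proposition \ref{distrib}; granting this, the corollary is a direct substitution of formulas already established, and no further computation with Colmez's explicit expression for $w_D$ is needed.
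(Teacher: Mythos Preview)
Your proof is correct and follows essentially the same route as the paper's: the paper's argument is the one-line chain $w_D(tz)=w_D(u^+z)=wu^+z=u^-wz=u^-(w_D(z))=-\frac{P_{\rm Sen,V}(\nabla)(w_D(z))}{t}$, which is exactly the identity $w_D\circ u^+=u^-\circ w_D$ combined with Proposition~\ref{trivial} and Theorem~\ref{Casimirinf}. Your write-up is more careful in justifying the equivariance $w\circ u^+=u^-\circ w$ via $w$ normalizing $K_m$ and in checking that $tz\in D_{\rm rig}^{\psi=0}$, but the underlying idea is identical.
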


\begin{proof}
  Le th\'{e}or\`{e}me \ref{Casimirinf} montre que
 $$ w_D(tz)=w_D(u^+z)=wu^+z=u^-wz=
 u^-(w_D(z))=
 -\frac{P_{\rm Sen,V}(\nabla)(w_D(z))}{t}.$$

\end{proof}

\subsection{Application aux repr\'{e}sentations de Banach}

  Rappelons qu'une $L$-repr\'{e}sentation de Banach de
  ${\rm GL}_2(\qp)$ est un $L$-espace de Banach
  $\Pi$ muni d'une action continue de ${\rm GL}_2(\qp)$.
  La repr\'{e}sentation $\Pi$ est dite \textit{unitaire} si elle admet
  une norme d\'{e}finissant la topologie de Banach et qui est
  invariante sous l'action de ${\rm GL}_2(\qp)$. Elle est
  dite \textit{admissible} s'il existe une injection continue
  $H$-\'{e}quivariante $\Pi\subset \mathcal{C}(H,L)^n$ pour un
  sous-groupe ouvert compact $H$ de ${\rm GL}_2(\qp)$ et
  un $n\geq 1$. Cela \'{e}quivaut \`{a} ce que le dual topologique
  $\Pi^*$ de $\Pi$ soit un $L\otimes_{O_L} O_L[[{\rm GL}_2(\zp)]]$-module
  de type fini, o\`{u} $O_L[[{\rm GL}_2(\zp)]]$ est l'alg\`{e}bre de groupe compl\'{e}t\'{e}e\footnote{C'est la limite projective des alg\`{e}bres de groupe des quotients
  finis de $\rm{GL}_2(\zp)$. De mani\`{e}re plus conceptuelle, c'est l'alg\`{e}bre des mesures \`{a} valeurs dans $O_L$ sur ${\rm GL}_2(\zp)$.}
  de ${\rm GL}_2(\zp)$.

    Si $\Pi$ est une ${\rm GL}_2(\qp)$-repr\'{e}sentation
  de Banach admissible, on note $\Pi^{\rm an}$ le sous-espace
  des vecteurs localement analytiques de $\Pi$ (ce sont les vecteurs
  $v\in \Pi$ tels que $g\to g\cdot v$ soit une fonction localement analytique
  sur ${\rm GL}_2(\qp)$, \`{a} valeurs dans le Banach $\Pi$). D'apr\`{e}s un th\'{e}or\`{e}me
  fondamental de Schneider et Teitelbaum~(\cite{STInv}, theorem 7.1),
  il s'agit d'un sous-espace dense de $\Pi$. Le r\'{e}sultat suivant est d\^{u} \`{a} Colmez~\cite{Cbigone}.

  \begin{theorem}\label{correspondance}
   Soit $V$ une $L$-repr\'{e}sentation absolument irr\'{e}ductible, de dimension~$2$
   de ${\rm Gal}(\overline{\qp}/\qp)$ et soient $D$ et $D_{\rm rig}$ les
   $(\varphi,\Gamma)$-modules attach\'{e}s \`{a} $V$. Il existe une
   ${\rm GL}_2(\qp)$-repr\'{e}sentation de Banach unitaire admissible,
   topologiquement absolument irr\'{e}ductible $\Pi=\Pi(V)$ telle que l'on ait des suites
   exactes de ${\rm GL}_2(\qp)$-modules topologiques
    $$0\to \Pi^*\otimes (\delta_D\circ \det)\to D\boxtimes \p1\to \Pi\to 0,$$
    $$0\to (\Pi^{\rm an})^*\otimes (\delta_D\circ \det)\to D_{\rm rig}\boxtimes \p1\to \Pi^{\rm an}\to 0.$$
  \end{theorem}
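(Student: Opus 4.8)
The plan is to realize $\Pi(V)$ as a quotient of the global sections of a ${\rm GL}_2(\qp)$-equivariant sheaf on $\p1(\qp)$ attached to $D$, using the two-chart decomposition $\p1(\qp)=\zp\cup w\cdot\zp$ glued along $\zpet$. First I would upgrade the operators $\varphi$, $\psi$, $\sigma_a$ on $D$ to the $P^+$-equivariant sheaf $U\mapsto D\boxtimes U$ on $\zp$ of Section~\ref{faisceau}, whose restriction maps are $\left(\begin{smallmatrix} 1 & a \\ 0 & 1\end{smallmatrix}\right)\circ\varphi^n\circ\psi^n\circ\left(\begin{smallmatrix} 1 & -a \\ 0 & 1\end{smallmatrix}\right)$. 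That these maps are compatible under refinement of covers and intertwine the $P^+$-action is formal, following from $\psi\circ\varphi={\rm id}$ and the commutation relations among $\varphi$, $\psi$ and $\Gamma$ recorded in Section~\ref{phi}.

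The crux is to glue the two charts, which requires a transition involution on the overlap $D\boxtimes\zpet=D^{\psi=0}$. I would define $w_D$ by the limiting Riemann-sum formula of Remark~\ref{explic},
$$w_D(z)=\lim_{n\to\infty}\sum_{i\in(\Z/p^n\Z)^*}\delta_D(i)(1+T)^{1/i}\sigma_{-1/i^2}\varphi^n\psi^n\big((1+T)^{-i}z\big),$$
for $z\in D^{\psi=0}$. The first input is that this limit exists and defines a continuous involution of $D^{\psi=0}$, which follows from the general formalism of~\cite{Cmirab}. Granting this, one sets $D\boxtimes\p1=\{(z_1,z_2)\in D\times D:{\rm Res}_{\zpet}(z_2)=w_D({\rm Res}_{\zpet}(z_1))\}$, and the formulas of Section~\ref{involution}(a)--(c), in which the lower unipotent is expressed through $w_D$, extend the $P^+$-action to a ${\rm GL}_2(\qp)$-action. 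The genuinely deep part of the theorem, and where I expect all the difficulty to lie, is the analogue over the Robba ring: one must show that $w_D$ preserves $D^{\dagger,\psi=0}$ and extends to a continuous involution of $D_{\rm rig}^{\psi=0}$, even though the defining series does not converge there. For this I would follow Colmez's strategy of analytic continuation from the crystalline case: for trianguline $V$ one computes $w_D$ on an explicit model of $D_{\rm rig}$, then deforms $V$ in $p$-adic families and passes to the limit by density and continuity. Everything else hinges on this step.

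With $D\boxtimes\p1$ and $D_{\rm rig}\boxtimes\p1$ constructed, I would extract the Banach representation as follows. Zero-extension $i_{\zp}(z)=(z,w_D({\rm Res}_{\zpet}(z)))$ embeds $D$ as a $P^+$-stable submodule of $D\boxtimes\p1$. The module $D\boxtimes\p1$ carries a natural ${\rm GL}_2(\qp)$-invariant pairing with values in $\delta_D\circ\det$; the term appearing in the sequence is a distinguished closed $G$-stable submodule, $\Pi=\Pi(V)$ is defined as the quotient, and the pairing identifies the submodule with $\Pi^*\otimes(\delta_D\circ\det)$. Unitarity comes from a $\varphi$-, $\psi$- and $\Gamma$-stable $O_{\mathcal{E}}$-lattice in $D$, which yields a ${\rm GL}_2(\qp)$-invariant lattice in $D\boxtimes\p1$ and hence an invariant norm on $\Pi$; admissibility follows from the finite generation of $\Pi^*$ over the completed group algebra, reflecting that $D$ is free of finite rank; and topological irreducibility is deduced from the absolute irreducibility of $V$ together with the full faithfulness of Remark~\ref{pleinefidelite}. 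Finally, running the same construction with $D_{\rm rig}$ in place of $D$ and matching the two sheaves through $D_{\rm rig}=\mathcal{R}\otimes_{\mathcal{E}^\dagger}D^\dagger$ yields the locally analytic sequence, the quotient being identified with $\Pi^{\rm an}$ via the theory of locally analytic vectors of Schneider--Teitelbaum~\cite{STInv}.
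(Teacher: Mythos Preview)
The paper does not give a proof of this theorem: it is stated as a result of Colmez, introduced by ``Le r\'esultat suivant est d\^u \`a Colmez~\cite{Cbigone}'', and no argument is supplied. So there is nothing in the paper to compare your proposal against.

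That said, your outline is a fair high-level summary of the strategy in~\cite{Cbigone}, and you correctly locate the hardest step (the extension of $w_D$ to $D_{\rm rig}^{\psi=0}$). A few points are too vague or optimistic to count as a proof sketch. First, ``a distinguished closed $G$-stable submodule'' does not pin down $\Pi^*\otimes(\delta_D\circ\det)$; in~\cite{Cbigone} this submodule is defined concretely (e.g.\ as $D^{\natural}\boxtimes\p1$, or equivalently via $\tilde{D}^+ + w\cdot\tilde{D}^+$ as recalled around Proposition~\ref{mirab}), and showing it is closed, $G$-stable, and its own orthogonal for $[\,\,,\,]_{\p1}$ is real work. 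Second, topological absolute irreducibility of $\Pi(V)$ does not follow formally from Remark~\ref{pleinefidelite}; this is a genuine theorem in~\cite{Cbigone}. Third, identifying the quotient of $D_{\rm rig}\boxtimes\p1$ with $\Pi^{\rm an}$ (rather than merely some locally analytic representation) is itself a nontrivial result of Colmez, not a formal consequence of Schneider--Teitelbaum. None of this is a flaw in your understanding of the architecture, but each of these steps occupies substantial space in~\cite{Cbigone}.
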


  \begin{remark}\label{check}
   Si $V$, $D$ et $\Pi$ sont comme dans le th\'{e}or\`{e}me \ref{correspondance}, on note
   $\check{\Pi}=\Pi(V^*\otimes \chi)$, o\`{u} $V^*$ est le dual
   de $V$. Le choix d'un isomorphisme $\wedge^2(V)\simeq \chi\cdot\delta_D$ induit un isomorphisme $\check{\Pi}\simeq\Pi\otimes (\delta_D^{-1}\circ \det)$, qui, combin\'{e} aux suites exactes du th\'{e}or\`{e}me \ref{correspondance}, induit des inclusions ${\rm GL}_2(\qp)$-\'{e}quivariantes
  $\check{\Pi}^*\subset
   D\boxtimes \p1$ et $(\check{\Pi}^{\rm an})^*\subset D_{\rm rig}\boxtimes \p1$.

  \end{remark}

  Le r\'{e}sultat suivant r\'{e}pond \`{a} une question
  de Harris (~\cite{EmCoates}, remark 3.3.8).

  \begin{theorem}\label{carinf}
  Soit $V$ une $L$-repr\'{e}sentation
  absolument irr\'{e}ductible de $G_{\qp}$, de dimension $2$,
  \`{a} poids de Hodge-Tate g\'{e}n\'{e}ralis\'{e}s $a$ et $b$. Alors l'\'{e}l\'{e}ment de Casimir
  agit par multiplication par $\frac{(a-b)^2-1}{2}$ sur
  $\Pi(V)^{\rm an}$.
  \end{theorem}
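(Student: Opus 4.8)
The plan is to transfer the computation of the Casimir action on $D_{\rm rig}$ (Theorem~\ref{Casimirinf}) to the Banach-space side via the exact sequences of Theorem~\ref{correspondance}. The key observation is that both the inclusion $(\Pi^{\rm an})^*\otimes(\delta_D\circ\det)\hookrightarrow D_{\rm rig}\boxtimes\p1$ and the surjection $D_{\rm rig}\boxtimes\p1\twoheadrightarrow\Pi^{\rm an}$ are morphisms of $\mathcal{D}(K_m)$-modules, hence in particular commute with the action of $U(\mathfrak{gl_2})$, and therefore with the central element $C$. So it suffices to know that $C$ acts by the scalar $\frac{(a-b)^2-1}{2}$ on the middle term $D_{\rm rig}\boxtimes\p1$; the scalar then descends to the quotient $\Pi^{\rm an}$ automatically.

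\textbf{First} I would reduce the action of $C$ on $D_{\rm rig}\boxtimes\p1$ to its action on the two copies of $D_{\rm rig}$. Recall from the discussion after Theorem~\ref{main1} that every element of $D_{\rm rig}\boxtimes\p1$ can be written as $z_1+w\cdot z_2$ with $z_1,z_2\in D_{\rm rig}$, where $D_{\rm rig}=D_{\rm rig}\boxtimes\zp$ is embedded via $i_{\zp}$. Since $C$ lies in the centre of $U(\mathfrak{sl_2})$, it commutes with the adjoint action of ${\rm GL}_2(\qp)$, and in particular $C$ commutes with the action of $w=\big(\begin{smallmatrix}0&1\\1&0\end{smallmatrix}\big)$. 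Theorem~\ref{Casimirinf} gives $C(z_1)=\frac{(a-b)^2-1}{2}z_1$ for $z_1\in D_{\rm rig}$, and then
$$C(w\cdot z_2)=w\cdot C(z_2)=\frac{(a-b)^2-1}{2}\,w\cdot z_2.$$
Adding these, $C$ acts by the scalar $\frac{(a-b)^2-1}{2}$ on all of $D_{\rm rig}\boxtimes\p1$.

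\textbf{Next} I would pass to the quotient. The map $D_{\rm rig}\boxtimes\p1\to\Pi^{\rm an}$ in Theorem~\ref{correspondance} is ${\rm GL}_2(\qp)$-equivariant and continuous, hence $\mathcal{D}(K_m)$-equivariant (one may invoke Proposition~\ref{distrib} together with the fact that surjections of admissible representations pass to the $\mathcal{D}(K_m)$-module structure), so it intertwines the action of $C$ on source and target. Because $C$ is scalar multiplication by $\frac{(a-b)^2-1}{2}$ on the source and the map is surjective, $C$ acts by the same scalar on $\Pi^{\rm an}$. This is exactly the assertion, and indeed it matches Theorem~\ref{infchar}, since $\frac{(a-b)^2-1}{2}=\frac{(b-a)^2-1}{2}$.

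\textbf{The only delicate point} is the compatibility of the $\mathcal{D}(K_m)$-structures under the maps of Theorem~\ref{correspondance}: one must check that the $U(\mathfrak{gl_2})$-action computed on $D_{\rm rig}\boxtimes\p1$ really is the restriction of the Schneider--Teitelbaum infinitesimal action on $\Pi^{\rm an}$, rather than merely being compatible on $D_{\rm rig}=D_{\rm rig}\boxtimes\zp$. This is a question about the globalized sheaf on $\p1(\qp)$ and should follow from the construction of the ${\rm GL}_2(\qp)$-equivariant extension in~\cite{Cbigone}, as recorded in Proposition~\ref{distrib}; the essential input is that the exact sequences are sequences of topological ${\rm GL}_2(\qp)$-modules, and taking locally analytic vectors is functorial and compatible with the Lie algebra action. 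Granting this, the argument is purely formal: the scalar on the middle term forces the same scalar on the quotient.
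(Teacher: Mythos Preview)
Your proposal is correct and follows essentially the same route as the paper. The paper's proof is a two-line argument: Theorem~\ref{Casimirinf} gives the scalar action of $C$ on $D_{\rm rig}$, this extends to $D_{\rm rig}\boxtimes\p1$ via the decomposition $z=z_1+w\cdot z_2$ (stated just after Theorem~\ref{main1} in the introduction rather than in the proof itself), and then one passes to the quotient $\Pi^{\rm an}$ using Theorem~\ref{correspondance}. Your write-up makes explicit both the $w$-commutation step and the compatibility of $U(\mathfrak{gl_2})$-actions under the quotient map, which the paper leaves implicit, but there is no difference in strategy.
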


   \begin{proof}
   D'apr\`{e}s le th\'{e}or\`{e}me \ref{Casimirinf}, l'\'{e}l\'{e}ment de Casimir
   agit par multiplication par $\frac{(a-b)^2-1}{2}$ sur
   $D_{\rm rig}\boxtimes\p1$. Comme $\Pi^{\rm an}$ est un quotient
   de $D_{\rm rig}\boxtimes \p1$ d'apr\`{e}s le th\'{e}or\`{e}me \ref{correspondance},
   le r\'{e}sultat suit.
   \end{proof}

   \begin{remark}
 Dans~\cite{DBenjamin}, on montre que toute repr\'{e}sentation localement
  analytique admissible et absolument irr\'{e}ductible d'un groupe de Lie $p$-adique
 admet un caract\`{e}re infinit\'{e}simal\footnote{ On montre en fait que les endomorphismes
 d'une telle repr\'{e}sentation sont tous scalaires.} et on conjecture que
 $\Pi^{\rm an}$ admet un caract\`{e}re infinit\'{e}simal pour toute repr\'{e}sentation
 de Banach admissible et absolument irr\'{e}ductible $\Pi$. Le probl\`{e}me est que
 $\Pi^{\rm an}$ n'est pas irr\'{e}ductible en g\'{e}n\'{e}ral et les m\'{e}thodes de~\cite{DBenjamin} (qui utilisent pleinement les r\'{e}sultats de~\cite{AW} et~\cite{STInv}) ne semblent pas s'adapter. Voici ce que l'on peut d\'{e}montrer avec les
 r\'{e}sultats de cet article: si $p>3$
 et si $\Pi$ est une ${\rm GL}_2(\qp)$-repr\'{e}sentation de Banach
    unitaire, admissible, absolument irr\'{e}ductible, alors
    $\Pi^{\rm an}$ admet un caract\`{e}re infinit\'{e}simal. En effet, si
    $\Pi$ est un sous-quotient d'une induite unitaire parabolique,
 le r\'{e}sultat est facile et sinon, d'apr\`{e}s un r\'{e}sultat de Pa\v{s}k\={u}nas~\cite{Pa}, conjectur\'{e}
 par Colmez, il existe une $L$-repr\'{e}sentation $V$ absolument irr\'{e}ductible, de dimension $2$
    de ${\rm Gal}(\overline{\qp}/\qp)$ telle que $\Pi\simeq \Pi(V)$ et donc le r\'{e}sultat suit du
    th\'{e}or\`{e}me \ref{carinf}.
 Si la r\'{e}duction modulo $p$ d'un r\'{e}seau invariant de $\Pi$ est
 de longueur finie, on peut d\'{e}montrer le m\^{e}me r\'{e}sultat sans l'hypoth\`{e}se
 $p>3$ et sans utiliser le th\'{e}or\`{e}me de Pa\v{s}k\={u}nas, mais la preuve reste bien d\'{e}tourn\'{e}e~\cite{{DfoncteurColmez}}.

 \end{remark}

\section{Mod\`{e}le de Kirillov et dualit\'{e}}\label{Kirillovetdualite}

  Ce chapitre est pr\'{e}liminaire \`{a} la preuve du th\'{e}or\`{e}me \ref{main2}.
  On commence par rappeler la construction d'un mod\`{e}le de Kirillov
   pour les vecteurs de $\Pi$ qui sont alg\'{e}briques sous l'action de l'unipotent sup\'{e}rieur. On rappelle ensuite le lien entre le mod\`{e}le de Kirillov
   et la dualit\'{e} entre $\Pi^{\rm an}$ et $(\Pi^{\rm an})^*$, qui jouera un
   r\^{o}le crucial dans le chapitre suivant.
  \textit{ Dans la suite on suppose que $V$ est absolument irr\'{e}ductible de dimension $2$, \`{a} poids de Hodge-Tate $0$ et
  $k\in\mathbf{N}^*$ (en particulier, $V$ est de Hodge-Tate).} Voir la d\'{e}finition \ref{phigamma} pour les
$(\varphi,\Gamma)$-modules $D,D_{\rm rig}, \tilde{D},\tilde{D}^+,\tilde{D}_{\rm dif}$ attach\'{e}s \`{a} $V$.
  Soit $\Pi$ la repr\'{e}sentation de Banach de ${\rm GL}_2(\qp)$ attach\'{e}e \`{a}
  $V$ par la correspondance de Langlands locale $p$-adique (th\'{e}or\`{e}me \ref{correspondance}).
Soit $U$ le radical unipotent du Borel sup\'{e}rieur de ${\rm GL}_2(\qp)$
  et soit $P$ le sous-groupe mirabolique de ${\rm GL}_2(\qp)$, form\'{e} des matrices
  du type $\left(\begin{smallmatrix} a & b \\0 & 1\end{smallmatrix}\right)$, avec
  $a\in\qpet$ et $b\in\qp$. Pour les notations $\check{\Pi}$ et
  $\check{\Pi}^{\rm an}$ voir la remarque \ref{check}.

\subsection{Le module $N_{{\rm dif},n}$}

  Le module $N_{{\rm dif},n}$ introduit dans cette partie va jouer un r\^{o}le crucial
  dans la suite. Il s'agit d'une version infinit\'{e}simale de l'\'{e}quation
  diff\'{e}rentielle attach\'{e}e par Berger~\cite{Ber} \`{a} une repr\'{e}sentation de de Rham.

\begin{proposition}\label{basedif}
  Posons $\varepsilon=1$ si $V$ n'est pas de Rham
    et $0$ dans le cas contraire. Il existe
    $e_1,e_2\in D_{{\rm dif},m(D)}^+[1/t]$ tels que

  a) $e_1,t^ke_2$ forment une base
     de $D_{{\rm dif},n}^+$ sur $L_n[[t]]$ pour tout $n\geq m(D)$.

  b) $\sigma_a(e_1)=e_1$ et $\sigma_a(e_2)=
      e_2+\varepsilon\log a\cdot e_1$ pour tout $a\in 1+p^{m(D)}\zp$.

\end{proposition}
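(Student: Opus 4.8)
The plan is to run Sen's theory directly on $D_{{\rm dif},n}^+$, using that the hypothesis forces $P_{\rm Sen,V}(X)=X(X-k)$, so that the Sen operator $\nabla$ on $D_{{\rm Sen},m(D)}=D_{{\rm dif},m(D)}^+/tD_{{\rm dif},m(D)}^+$ is $L_{m(D)}$-linear with the two \emph{distinct} eigenvalues $0$ and $k$. I would first fix eigenvectors $\bar x_0$ (for $0$) and $\bar y_0$ (for $k$) of $\nabla$ on $D_{{\rm Sen},m(D)}$; they are defined over $L_{m(D)}$ and form a basis. Since $D_{{\rm dif},n}^+=L_n[[t]]\otimes_{L_{m(D)}[[t]]}D_{{\rm dif},m(D)}^+$ compatibly with the $L_n$-linear action of $\sigma_a$ for $a\in 1+p^{m(D)}\zp$, it suffices to carry out the whole construction at level $m(D)$ and extend scalars; the basis property and the formulas of (b) then propagate to every $n\ge m(D)$.

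Next I would build $e_1$. As $\nabla$ preserves the $t$-adic filtration and acts on ${\rm gr}^m=t^mD_{{\rm dif},m(D)}^+/t^{m+1}D_{{\rm dif},m(D)}^+\cong D_{{\rm Sen},m(D)}$ by $\nabla+m$, solving $\nabla e_1=0$ with $e_1\equiv\bar x_0\pmod t$ amounts at each level $m\ge1$ to inverting $\nabla+m$ on $D_{{\rm Sen},m(D)}$, whose eigenvalues $m,m+k$ are nonzero for $m\ge1$. There is thus no obstruction, and a horizontal lift $e_1$ exists with $\nabla e_1=0$ and $e_1\notin tD_{{\rm dif},m(D)}^+$. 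For the second vector I would produce $g:=t^ke_2\in D_{{\rm dif},m(D)}^+$ with $g\equiv\bar y_0\pmod t$; writing $e_2=t^{-k}g$, the desired relation $\nabla e_2=\varepsilon e_1$ is equivalent to $(\nabla-k)g=\varepsilon\,t^ke_1$. Now $\nabla-k$ acts on ${\rm gr}^m$ as $\nabla+m-k$, invertible except at $m\in\{0,k\}$; the level $m=0$ equation $(\nabla-k)\bar y_0=0$ holds by construction, and the only genuine obstruction sits at the resonant level $m=k$, where one must invert $\nabla$ on $D_{{\rm Sen},m(D)}$, whose kernel is exactly $L_{m(D)}\bar x_0$.

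This resonance is the heart of the matter. I would extract the well-defined scalar $s\in L_{m(D)}$ equal to the $\bar x_0$-component of the accumulated right-hand side of the homogeneous equation at level $k$, so that $(\nabla-k)g=s\,t^ke_1$ is solvable with $g\equiv\bar y_0\pmod t$. The key claim is that $s=0$ if and only if $V$ is de Rham. Indeed, if $s=0$ then $e_1$ and $e_2=t^{-k}g$ are two $\nabla$-horizontal (hence $\sigma_a$-fixed for $a\in1+p^{m(D)}\zp$) vectors of $D_{{\rm dif},m(D)}^+[1/t]$, independent because $e_1,g$ reduce to the basis $\bar x_0,\bar y_0$; this forces $\dim_{L_{m(D)}}(D_{{\rm dif},m(D)}^+[1/t])^{\sigma}=2$, whence $V$ is de Rham by Fontaine's theory. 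Conversely, when $V$ is de Rham I would read $e_1,e_2$ off a basis of $D_{\rm dR}(V)$ adapted to its filtration (jumps at $0$ and $-k$), using ${\rm Fil}^i(D_{\rm dR}(V))=(t^iD_{{\rm dif},n}^+)^{\Gamma}$ from Remark~\ref{moduleFontaine}: this exhibits horizontal $e_1,e_2$ directly and forces $s=0$. Accordingly I set $\varepsilon=0$ when $s=0$; when $s\ne0$ (so $V$ is not de Rham) I rescale $e_2\mapsto s^{-1}e_2$, replacing $g$ by the still-admissible second basis vector $s^{-1}g$ and giving $\nabla e_2=e_1$, i.e. $\varepsilon=1$.

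It remains to assemble the pieces. For (a), $e_1$ and $t^ke_2$ reduce modulo $t$ to $\bar x_0$ and a nonzero multiple of $\bar y_0$, a basis of $D_{{\rm Sen},n}$, so by Nakayama $(e_1,t^ke_2)$ is an $L_n[[t]]$-basis of $D_{{\rm dif},n}^+$ for every $n\ge m(D)$. For (b), I would upgrade the infinitesimal relations $\nabla e_1=0$ and $\nabla e_2=\varepsilon e_1$ (hence $\nabla^2e_2=0$) to the group level by noting that, for $a\in1+p^{m(D)}\zp$, the $L_n$-linear operator $\sigma_a$ equals $\exp((\log a)\nabla)$; since the series terminates, $\sigma_a e_1=e_1$ and $\sigma_a e_2=e_2+\varepsilon(\log a)\,e_1$. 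The delicate point throughout is precisely the resonance at $m=k$ and the identification of the scalar $s$ with the de Rham obstruction; every non-resonant level reduces to a routine inversion over $L_n[[t]]$.
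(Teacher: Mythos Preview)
The paper gives no proof here; it simply cites \cite[prop.~VI.3.2]{Cbigone}. Your argument is a correct direct proof, and it is almost certainly what lies behind the cited reference: lift the two Sen eigenvectors inductively against the $t$-adic filtration, observe that the only resonance occurs at graded level $m=k$ where the resulting scalar obstruction $s$ is precisely the de Rham obstruction, and rescale when $s\neq 0$. Your identification of $s=0$ with the existence of two independent $\nabla$-horizontal sections of $D_{{\rm dif},m(D)}^+[1/t]$, hence with $V$ de Rham, is sound (a nonzero horizontal element of $t^jD_{{\rm dif},m(D)}^+$ with $j\ge 1$ would force $-j$ to be an eigenvalue of $\nabla$ on $D_{{\rm Sen},m(D)}$, which is impossible since those eigenvalues are $0$ and $k$). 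The Nakayama step for (a) and the de Rham case via a filtration-adapted basis of $D_{\rm dR}(V)$ are both fine.

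One small point deserves a word. The passage from the infinitesimal relations $\nabla e_1=0$, $\nabla e_2=\varepsilon e_1$ to the group relations in~(b) via ``$\sigma_a=\exp((\log a)\nabla)$'' is correct in spirit but not automatic: convergence of that exponential on $D_{{\rm dif},m(D)}^+$ (or even just on $D_{{\rm Sen},m(D)}$) holds only once $m(D)$ is taken large enough, which is standard Sen--Fontaine theory. Since the paper explicitly allows enlarging $m(D)$ finitely many times (Remark~\ref{deb}), this is harmless. Alternatively one can argue directly that $\gamma e_1$ is again horizontal in $D_{{\rm dif},m(D)}^+$ (as $\gamma$ commutes with $\nabla$) and hence a scalar multiple of $e_1$, then pin down the scalar on $D_{{\rm Sen},m(D)}$; similarly $\gamma e_2-e_2$ is horizontal in $t^{-k}D_{{\rm dif},m(D)}^+$ and is forced into $L_{m(D)}e_1$, with the constant recovered from the limit defining~$\nabla$.
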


\begin{proof}
C'est le contenu de la proposition VI.3.2 de~\cite{Cbigone}.
\end{proof}

\begin{definition} Les \'{e}l\'{e}ments $e_1,e_2$ de $D_{{\rm dif},n}^+$ \'{e}tant comme dans la proposition \ref{basedif}, soit
     $$N_{{\rm dif},n}=L_n[[t]]e_1\oplus L_n[[t]]e_2\subset t^{-k}D_{{\rm dif},n}^+.$$

\end{definition}

\begin{example}
 Si $V$ est de de Rham, on peut choisir pour $e_2$ une base de $D_{\rm dR}(V)/D^+_{\rm dR}(V)$, que l'on compl\`{e}te
 en une base $e_1,e_2$ de $D_{\rm dR}(V)$. On a alors
 $N_{{\rm dif},n}=L_n[[t]]\otimes_{L} D_{\rm dR}(V)$ pour tout
 $n$ suffisament grand.
\end{example}

  \subsection{Une caract\'{e}risation diff\'{e}rentielle des repr\'{e}sentations de de Rham}\label{eqdiff}

   Dans cette partie on donne un crit\`{e}re simple pour distinguer
   les repr\'{e}sentations de de Rham parmi les repr\'{e}sentations de Hodge-Tate.
   Ce genre d'argument est inspir\'{e} de~\cite{BC}, proposition 5.2.1. Notons
$\nabla_{2k}=\nabla(\nabla-1)...(\nabla-2k+1)$.

  \begin{lemma}
  \label{diff}
   Soit $n\geq m(D)$ et soient $e_1$ et $e_2$ comme dans la proposition \ref{basedif}.
  Notons $$X_n=\{z\in D_{{\rm dif},n}^+| \nabla_{2k}(z)\in t^{2k}N_{{\rm dif},n}\},$$

   a) Si $V$ est de de Rham, alors $X_n=D_{{\rm dif},n}^+$.

   b) Si $V$ est de Hodge-Tate mais pas de de Rham, alors $X_n=L_n[[t]]e_1+t^{2k}L_n[[t]]e_2$.
  \end{lemma}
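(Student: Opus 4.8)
Le plan est de tout ramener \`{a} un calcul d'alg\`{e}bre lin\'{e}aire sur le $L_n$-module $N_{{\rm dif},n}=L_n[[t]]e_1\oplus L_n[[t]]e_2$, une fois l'action de $\nabla$ rendue explicite dans la base $e_1,e_2$. Je commencerais par d\'{e}river les relations de la proposition \ref{basedif}: de $\sigma_a(e_1)=e_1$ et $\sigma_a(e_2)=e_2+\varepsilon\log a\cdot e_1$, la d\'{e}finition $\nabla=\lim_{a\to 1}\frac{\sigma_a-1}{a-1}$ jointe \`{a} $\lim_{a\to 1}\frac{\log a}{a-1}=1$ donne imm\'{e}diatement $\nabla(e_1)=0$ et $\nabla(e_2)=\varepsilon e_1$. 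Comme $\nabla$ est au-dessus de $\partial:=t\frac{d}{dt}$ sur $L_n[[t]]$, on peut alors l'\'{e}crire $\nabla=\partial+N$, o\`{u} $\partial$ agit coordonn\'{e}e par coordonn\'{e}e et $N$ est l'endomorphisme \`{a} coefficients constants d\'{e}fini par $N(e_1)=0$ et $N(e_2)=\varepsilon e_1$; en particulier $N^2=0$ et $[\partial,N]=0$.

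Le point central du calcul est l'identit\'{e} $\nabla_{2k}=\prod_{j=0}^{2k-1}(\partial-j+N)=Q(\partial)+Q'(\partial)N$, o\`{u} $Q(X)=X(X-1)\cdots(X-2k+1)$ et $Q'$ est sa d\'{e}riv\'{e}e: en effet, $\partial$ et $N$ commutant et $N^2$ \'{e}tant nul, seuls les termes du produit comportant $0$ ou $1$ facteur $N$ subsistent, et le coefficient de $N$ est exactement $\sum_j\prod_{j'\neq j}(\partial-j')=Q'(\partial)$. Puisque $\partial$ agit sur $t^m$ par multiplication par $m$, les op\'{e}rateurs $Q(\partial)$ et $Q'(\partial)$ agissent sur la composante en $t^m$ par multiplication par $Q(m)$ et $Q'(m)$. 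Pour $z=fe_1+he_2$ avec $f,h\in L_n[[t]]$ on obtient ainsi $\nabla_{2k}(z)=\big(Q(\partial)f+\varepsilon Q'(\partial)h\big)e_1+Q(\partial)h\cdot e_2$.

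Il ne reste qu'\`{a} exploiter que $Q$ a pour racines $0,1,\dots,2k-1$, toutes simples: $Q(m)=0$ pour $0\le m\le 2k-1$, si bien que $Q(\partial)$ envoie $L_n[[t]]$ dans $t^{2k}L_n[[t]]$, tandis que $Q'(m)\neq 0$ pour ces m\^{e}mes $m$. La composante selon $e_2$ de $\nabla_{2k}(z)$ est donc automatiquement dans $t^{2k}L_n[[t]]$, et celle selon $e_1$ y appartient si et seulement si $\varepsilon h\equiv 0\pmod{t^{2k}}$. Si $V$ est de de Rham ($\varepsilon=0$) cette condition est vide, d'o\`{u} $X_n=D_{{\rm dif},n}^+$ et a). Si $V$ est de Hodge-Tate mais pas de de Rham ($\varepsilon=1$), la condition $h\equiv 0\pmod{t^{2k}}$ s'ajoute \`{a} l'appartenance $h\in t^kL_n[[t]]$ d\'{e}j\`{a} forc\'{e}e par $z\in D_{{\rm dif},n}^+$, tandis que $f$ reste arbitraire; on trouve donc $X_n=L_n[[t]]e_1+t^{2k}L_n[[t]]e_2$, soit b). La seule \'{e}tape vraiment substantielle est le calcul de $\nabla_{2k}$ et la remarque que la simplicit\'{e} des racines de $Q$ garantit $Q'(m)\neq 0$ sur $\{0,\dots,2k-1\}$ --- c'est exactement ce qui distingue le cas de de Rham du cas non de de Rham.
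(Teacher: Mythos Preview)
Ta preuve est correcte et suit essentiellement la m\^eme d\'emarche que celle de l'article: calculer $\nabla(e_1)=0$, $\nabla(e_2)=\varepsilon e_1$, puis exprimer $\nabla_{2k}(fe_1+he_2)$ comme $(Q(\partial)f+\varepsilon Q'(\partial)h)e_1+Q(\partial)h\cdot e_2$ avec $Q(X)=\prod_{j=0}^{2k-1}(X-j)$, et conclure via $Q(m)=0$ et $Q'(m)\neq 0$ pour $0\le m\le 2k-1$. L'article proc\`ede par la r\`egle de Leibniz pour obtenir $P(\nabla)(Be_2)=(P(\nabla)B)e_2+(P'(\nabla)B)e_1$, tandis que tu passes par la d\'ecomposition $\nabla=\partial+N$ avec $N^2=0$ et $[\partial,N]=0$, ce qui donne directement $Q(\partial+N)=Q(\partial)+Q'(\partial)N$; c'est la m\^eme identit\'e, simplement empaquet\'ee un peu plus proprement.
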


\begin{proof} Notons que $\nabla_{2k}(t^j)=j(j-1)...(j-2k+1)t^j\in t^{2k}L_n[[t]]$
 pour tout $j\geq 0$. La partie a) suit alors de cette observation et du fait que
pour tous $A,B\in L_n[[t]]$ on a (noter que $\nabla e_1=\nabla e_2=0$)
 $$\nabla_{2k}(A(t)e_1+B(t)e_2)=\nabla_{2k}(A(t))e_1+\nabla_{2k}(B(t))e_2.$$ Supposons donc que $V$ n'est pas de de Rham,
 ce qui implique les \'{e}galit\'{e}s $\nabla(e_2)=e_1$ et $\nabla(e_1)=0$.
 D'apr\`{e}s la r\`{e}gle de Leibnitz, on a pour tout $B\in L_n[[t]]$ et tout $j\geq 0$
  $$\nabla^j(Be_2)=\nabla^j(B)e_2+j\nabla^{j-1}(B)e_1.$$ On en d\'{e}duit que pour tout
  $P\in L[[T]]$ on a $P(\nabla)(Be_2)=(P(\nabla)B)e_2+(P'(\nabla)B)e_1$, donc
   $$\nabla_{2k}(Be_2)=(\nabla_{2k}B)e_2+\left(\sum_{j=0}^{2k-1}\frac{\nabla_{2k}}{\nabla-j} B\right)e_1.$$
  Comme $\nabla_{2k}(L_n[[t]])\subset t^{2k}L_n[[t]]$, on obtient
  que $x=Ae_1+Be_2$ est dans $X_n$ si et seulement si $$\sum_{j=0}^{2k-1}\frac{\nabla_{2k}}{\nabla-j} B\in t^{2k}L_n[[t]].$$
  Mais si $B=\sum_{j\geq 0} \alpha_jt^j$, alors on a des \'{e}galit\'{e}s dans $L_n[[t]]/t^{2k}L_n[[t]]$
   $$ \sum_{j=0}^{2k-1}\frac{\nabla_{2k}}{\nabla-j} B=\sum_{s=0}^{2k-1}\alpha_s\cdot \sum_{j=0}^{2k-1}\frac{\nabla_{2k}}{\nabla-j}(t^s)$$
   $$=\sum_{s=0}^{2k-1} \alpha_s\cdot \frac{\nabla_{2k}}{\nabla-s}(t^s)=\sum_{s=0}^{2k-1} (-1)^{2k-1-s}s!\cdot (2k-s-1)!\alpha_s\cdot t^s.$$
  On a donc $x\in X_n$ si et seulement si $\alpha_s=0$ pour tout $0\leq s\leq 2k-1$, i.e. si et seulement si $B\in t^{2k}L_n[[t]]$. Cela
  permet de conclure.

\end{proof}

\subsection{Les accouplements  $[\,\,,\,]_{\p1}$ et $[\,\,,\,]_{\rm dif}$}\label{acdif} \label{acp1}

 Soit $V$ comme dans l'introduction de ce chapitre et identifions $\wedge^2(V)$ \`{a} $L$ par le choix d'une base de $\wedge^2(V)$.
  L'accouplement $V\times V\to L$ donn\'{e} par $(x,y)\to x\wedge y$ induit, par fonctorialit\'{e}, un accouplement
  $D\times D\to \mathcal{E}$, not\'{e} encore $(x,y)\to x\wedge y$, qui satisfait\footnote{Cette torsion par $\chi\cdot \delta_D$ vient du fait que
  l'accouplement $V\times V\to L$ n'est pas Galois-\'{e}quivariant, mais il le devient si l'on tord par $\chi\cdot\delta_D$.} (pour
  $x,y\in D$ et $a\in\zpet$)
    $$\sigma_a(x)\wedge \sigma_a(y)=a\delta_D(a)\cdot\sigma_a(x\wedge y), \quad \varphi(x)\wedge \varphi(y)=\delta_D(p)\cdot\varphi(x\wedge y).$$

  Notons ${\rm res}_0\left(f\frac{dT}{1+T}\right)$ le r\'{e}sidu en $0$ de la forme diff\'{e}rentielle $\frac{f}{1+T} dT$.
On d\'{e}finit un accouplement $[\, ,\,]: D\times
 D\to L$ par la formule
 $$[x,y]={\rm res}_0 \left((\sigma_{-1}(x)\wedge y)\frac{dT}{1+T}\right),$$
ainsi qu'un accouplement
  $[ \, \,, \, ]_{\p1}: (D\boxtimes \p1)\times
    (D\boxtimes \p1)\to L$ par la formule
  $$[(x_1,x_2),(y_1,y_2)]_{\p1}=[x_1,y_1]+[\varphi\psi(x_2),\varphi\psi(y_2)].$$ Sa restriction \`{a} $D\times D$ est l'accouplement
  $[\, , \,]$. Cette discussion s'applique aussi \`{a} $D_{\rm rig}$.

\begin{proposition}
\label{acc}
 1) $[\,\,,\,]_{\p1}$ est un accouplement parfait de
 $L$-espaces vectoriels topologiques, satisfaisant
 $[gx,gy]_{\p1}=\delta_D(\det g)[x,y]_{\p1}$ pour tous $x,y\in \Delta$
 et $g\in {\rm GL}_2(\qp)$.

 2) $\check{\Pi}^*$ (resp. $(\check{\Pi}^{\rm an})^*$) est son propre orthogonal dans $D\boxtimes \p1$
 (resp. $D_{{\rm rig}}\boxtimes \p1$).

\end{proposition}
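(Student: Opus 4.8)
The plan is to reduce everything to the residue pairing $[\,,\,]$ on $D$ (resp. $D_{\rm rig}$), proving equivariance and perfectness of $[\,,\,]_{\p1}$ in part 1), and then to extract part 2) formally from perfectness together with the exact sequences of Theorem \ref{correspondance}. First I would record how $[\,,\,]$ transforms on $D\times D$. Starting from the two wedge identities $\sigma_a(x)\wedge\sigma_a(y)=a\delta_D(a)\cdot\sigma_a(x\wedge y)$ and $\varphi(x)\wedge\varphi(y)=\delta_D(p)\cdot\varphi(x\wedge y)$, combined with the change of variable $\frac{d\sigma_a(T)}{1+\sigma_a(T)}=a\,\frac{dT}{1+T}$ and the adjunction of $\varphi$ and $\psi$ under ${\rm res}_0$, one obtains that $[\,,\,]$ is multiplied by $\delta_D(a)$ under $\sigma_a$, that $\varphi$ and $\psi$ are adjoint up to $\delta_D(p)$, and that the $p\zp$- and $\zpet$-parts are orthogonal, so that $[z,y]=[\varphi\psi(z),\varphi\psi(y)]+[{\rm Res}_{\zpet}(z),{\rm Res}_{\zpet}(y)]$. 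Perfectness of $[\,,\,]$ on $D$ follows from perfectness of the residue pairing $(f,g)\mapsto{\rm res}_0(fg\,\frac{dT}{1+T})$ on $\mathcal{E}$ (resp. $\mathcal{R}$) together with the freeness of $D$ of rank $2$ and the perfectness of $\wedge^2V\simeq L$.

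Given these, the identity $[gx,gy]_{\p1}=\delta_D(\det g)[x,y]_{\p1}$ is checked on a generating set of ${\rm GL}_2(\qp)$, both sides being multiplicative in $g$. On the centre and the upper Borel $P^+$ the verification is immediate from the explicit formulas of Section \ref{involution} and the $\varphi,\psi,\Gamma$-properties just recorded. The delicate case is $g=w$: here one writes an element of $D\boxtimes\p1$ as $(z_1,z_2)$ with the gluing condition ${\rm Res}_{\zpet}(z_2)=w_D({\rm Res}_{\zpet}(z_1))$, and the orthogonality relation above reduces the comparison of $[x_1,y_1]+[\varphi\psi(x_2),\varphi\psi(y_2)]$ with its $w$-transform to the behaviour of $[\,,\,]$ on $D\boxtimes\zpet$ under $w_D$; the defining formula for $[\,,\,]_{\p1}$ is arranged precisely so that this matches, up to $\delta_D(-1)=\delta_D(\det w)$. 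Perfectness of $[\,,\,]_{\p1}$ then follows from perfectness of $[\,,\,]$ and the gluing description of $D\boxtimes\p1$ along $\zpet$. The same computations apply verbatim to $D_{\rm rig}$.

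For part 2), set $X=D\boxtimes\p1$, $W=\check{\Pi}^*$ and $\tau=\delta_D\circ\det$; by part 1) the map $\iota\colon x\mapsto[x,\cdot]_{\p1}$ is a ${\rm GL}_2(\qp)$-equivariant topological isomorphism $X\xrightarrow{\ \sim\ }X^*\otimes\tau$. By Theorem \ref{correspondance} and Remark \ref{check} we have an exact sequence $0\to W\to X\to\Pi\to 0$ with $\Pi\simeq\check{\Pi}\otimes\tau$, and feeding it through $\iota$ identifies $X/W^{\perp}$ with $W^*\otimes\tau=\check{\Pi}\otimes\tau\simeq\Pi$, so that $X/W$ and $X/W^{\perp}$ are isomorphic as ${\rm GL}_2(\qp)$-modules. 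The key input is that $W$ is isotropic, i.e. $[\,,\,]_{\p1}$ vanishes on $W\times W$ and hence $W\subseteq W^{\perp}$; this is the compatibility of the pairing with Colmez's correspondence, established in \cite{Cbigone}. Granting it, $W\subseteq W^{\perp}$ produces a ${\rm GL}_2(\qp)$-equivariant continuous surjection $X/W\twoheadrightarrow X/W^{\perp}$, i.e. a nonzero endomorphism of $\Pi$ whose kernel $W^{\perp}/W$ is a proper closed invariant subspace; since $\Pi$ is topologically irreducible this kernel is $0$, whence $W=W^{\perp}$. The analytic statement for $(\check{\Pi}^{\rm an})^*$ inside $D_{\rm rig}\boxtimes\p1$ is obtained by the same argument.

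The main obstacle is the isotropy $W\subseteq W^{\perp}$. The equivariance and perfectness in part 1) are formal consequences of residue calculus and the explicit action formulas, and the passage from the inclusion $W\subseteq W^{\perp}$ to the equality $W=W^{\perp}$ is a soft irreducibility argument; but the vanishing of $[\,,\,]_{\p1}$ on $\check{\Pi}^*\times\check{\Pi}^*$ genuinely encodes the self-duality of the $p$-adic Langlands correspondence and relies on Colmez's construction of $\Pi(V)$ rather than on formal manipulation.
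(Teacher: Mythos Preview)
The paper's own proof is a bare list of citations to \cite{Cbigone}; your proposal is a genuine outline, and its overall shape is sound. Two points deserve comment.

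In part 1), your treatment of the $w$-equivariance is too quick. Writing $w\cdot(x_1,x_2)=(x_2,x_1)$ and unwinding the definition of $[\,,\,]_{\p1}$, after using the decomposition $[x,y]=[\varphi\psi(x),\varphi\psi(y)]+[{\rm Res}_{\zpet}(x),{\rm Res}_{\zpet}(y)]$ you are left with the identity $[w_D(a),w_D(b)]=\delta_D(-1)\,[a,b]$ for $a,b\in D^{\psi=0}$. This is not a formality: it is precisely the content of \cite[th\'eor\`eme~II.1.13]{Cbigone} and encodes the compatibility of Colmez's involution with the residue pairing. So the obstacle you place at the end (``isotropy of $\check{\Pi}^*$'') is not the only substantive input; the $w_D$-equivariance of $[\,,\,]$ on $D\boxtimes\zpet$ is another.

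For part 2), your route is different from Colmez's. He proves the self-orthogonality directly (th\'eor\`eme~II.2.11, proposition~V.2.10, remarque~V.2.21 of \cite{Cbigone}), whereas you deduce it from perfectness, the exact sequence of Theorem~\ref{correspondance}, isotropy $W\subseteq W^{\perp}$, and topological irreducibility of $\Pi$. Your deduction is correct: via $\iota$ one has $X/W^{\perp}\simeq W^*\otimes\tau=\check{\Pi}\otimes\tau\simeq\Pi$ (using reflexivity of admissible Banach, resp.\ locally analytic, representations), so $W\subseteq W^{\perp}$ yields a continuous equivariant surjection $\Pi\twoheadrightarrow\Pi$ whose kernel $W^{\perp}/W$ is a proper closed subrepresentation, hence zero. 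The gain of your approach is conceptual clarity; the cost is that you are invoking Theorem~\ref{correspondance} and the isotropy of $\check{\Pi}^*$, both of which in Colmez's logical order are proved \emph{alongside} the self-orthogonality rather than prior to it, so there is a risk of circularity if one traces the dependencies in \cite{Cbigone}. Within the present paper, where both Theorem~\ref{correspondance} and Proposition~\ref{acc} are black-boxed from \cite{Cbigone}, your argument is a legitimate alternative reading.
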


\begin{proof}
  Toutes les r\'{e}f\'{e}rences sont \`{a}~\cite{Cbigone} (on a $[x,y]=\{x\otimes\delta_D^{-1},y\}$ avec les notations de loc.cit). 1) est la r\'{e}union des propositions I.2.1, I.2.2, II.1.11 (et des
   remarques faites avant la proposition V.2.10),
 et du th\'{e}or\`{e}me II.1.13. Ensuite, 2) est la r\'{e}union
 du th\'{e}or\`{e}me II.2.11, de la proposition
 V.2.10 et de la remarque V.2.21.
\end{proof}

  On d\'{e}duit du 1) de la proposition pr\'{e}c\`{e}dente que pour tout
  $k\geq 1$ et $x,y\in D_{\rm rig}$ on a
$$[(u^-)^kx, y]_{\p1}=
(-1)^k[x, (u^-)^k y]_{\p1}.$$

   L'accouplement $V\times V\to L$ induit aussi un accouplement $\tilde{D}_{\rm dif}\times \tilde{D}_{\rm dif}\to L\otimes_{\qp} \mathbf{B}^H_{\rm dR}$,
   not\'{e} $(x,y)\to x\wedge y$, satisfaisant $\sigma_a(x)\wedge \sigma_a(y)=a\delta_D(a)\cdot\sigma_a(x\wedge y)$ pour $x,y\in \tilde{D}_{\rm dif}$
   et $a\in\zpet$. Soit ${\rm Res}_{\zp} : L\otimes_{\qp} \mathbf{B}^H_{\rm dR} \to L((t))$ la trace de Tate normalis\'{e}e (voir \ref{Tate}) et,
si $f=\sum_{n>>-\infty} a_nt^n\in L((t))$, notons ${\rm res}_0(f dt)=a_{-1}$.
On d\'{e}finit un accouplement
  $[\, ,\,]_{\rm dif}: \tilde{D}_{\rm dif}\times
   \tilde{D}_{\rm dif}\to L$ par $$[x,y]_{{\rm dif}}={\rm res}_0 \left({\rm{Res}}_{\zp} (\sigma_{-1}(x)\wedge
 y)dt\right).$$

 Le r\'{e}sultat suivant se d\'{e}duit\footnote{C'est une simple traduction.} de~\cite[lemme VI.4.16]{Cbigone}.

 \begin{lemma}
 \label{orth}
  L'orthogonal dans $D_{{\rm dif},n}^+[1/t]$ de $N_{{\rm dif},n}$ est
  $t^kN_{{\rm dif}, n}$ et $D_{{\rm dif},n}^+$ est son propre orthogonal
  dans $D_{{\rm dif},n}^+[1/t]$.
 \end{lemma}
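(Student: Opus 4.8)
The plan is to turn the statement into an explicit residue computation in the basis $e_1,e_2$, the decisive input being the exact shape of $e_1\wedge e_2$.

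First I would compute $e_1\wedge e_2$. From the covariance $\sigma_a(x)\wedge\sigma_a(y)=a\delta_D(a)\,\sigma_a(x\wedge y)$ together with $\sigma_a(e_1)=e_1$ and $\sigma_a(e_2)=e_2+\varepsilon\log a\cdot e_1$ (Proposition~\ref{basedif}), and since $e_1\wedge e_1=0$ kills the term in $\log a$, one gets $\sigma_a(e_1\wedge e_2)=(a\delta_D(a))^{-1}(e_1\wedge e_2)$ for $a\in 1+p^{m(D)}\zp$. As $\chi(a)=a$ on $\zpet$ and $\delta_D=\chi^{-1}\det V$, one has $a\delta_D(a)=\det V(a)$; and since $V$ is Hodge--Tate of weights $0,k$, its determinant is Hodge--Tate of weight $k$, so $\det V(a)=a^k$ on a small enough subgroup. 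Hence $\sigma_a(e_1\wedge e_2)=a^{-k}(e_1\wedge e_2)$, which says exactly that $t^k(e_1\wedge e_2)=e_1\wedge(t^ke_2)$ is $\Gamma_{m(D)}$-invariant. As this element lies in $(L\otimes_{\qp}\mathbf{B}_{\rm dR}^H)^{{\rm Gal}(\overline{\qp}/F_{m(D)})}=L_{m(D)}$ and is nonzero (the $e_i$ being $L_n((t))$-independent and $\wedge$ nondegenerate), I conclude $e_1\wedge e_2=\lambda t^{-k}$ for some $\lambda\in L_{m(D)}^{\times}$. This is the step where the weight $k$ of $\det V$ enters and produces the shift by $t^k$ between $N_{{\rm dif},n}$ and $t^kN_{{\rm dif},n}$; it is the main point, and everything afterwards is formal.

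Next I would dispose of $\sigma_{-1}$. The three lattices $N_{{\rm dif},n}$, $t^kN_{{\rm dif},n}$ and $D_{{\rm dif},n}^+$ are $\Gamma$-stable, hence $\sigma_{-1}$-stable, and $\sigma_{-1}$ is bijective; writing $[z,y]_{\rm dif}=\langle\!\langle\sigma_{-1}(z),y\rangle\!\rangle$ with $\langle\!\langle u,v\rangle\!\rangle:={\rm res}_0({\rm Res}_{\zp}(u\wedge v)\,dt)$, the orthogonal of a $\sigma_{-1}$-stable lattice is the same for $[\,,\,]_{\rm dif}$ and for $\langle\!\langle\,,\,\rangle\!\rangle$, so I may work with the latter. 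For $u=u_1e_1+u_2e_2$ and $v=v_1e_1+v_2e_2$ with $u_i,v_i\in L_n((t))$, the $L_n((t))$-bilinearity of $\wedge$ gives $u\wedge v=\lambda t^{-k}(u_1v_2-u_2v_1)$, hence $\langle\!\langle u,v\rangle\!\rangle={\rm res}_0({\rm Res}_{\zp}(\lambda t^{-k}(u_1v_2-u_2v_1))\,dt)$. Everything is now controlled by the scalar residue pairing $\langle f,g\rangle={\rm res}_0({\rm Res}_{\zp}(fg)\,dt)$ on $L_n((t))$, for which $(t^cL_n[[t]])^{\perp}=t^{-c}L_n[[t]]$; this holds because ${\rm Res}_{\zp}$ acts on $L_n((t))$ coefficientwise through the normalized Tate trace $T_0\colon L_n\to L$, whose trace form is perfect.

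Finally I would read off the two relations. Pairing $z=z_1e_1+z_2e_2$ against $v\in N_{{\rm dif},n}$ with $v_2=0$, resp. $v_1=0$, forces $\lambda t^{-k}z_2$, resp. $\lambda t^{-k}z_1$, into $(L_n[[t]])^{\perp}=L_n[[t]]$, i.e. $z_1,z_2\in t^kL_n[[t]]$, which gives $N_{{\rm dif},n}^{\perp}=t^kN_{{\rm dif},n}$. Pairing instead against $D_{{\rm dif},n}^+=L_n[[t]]e_1\oplus t^kL_n[[t]]e_2$ forces $\lambda t^{-k}z_2\in L_n[[t]]$ and $\lambda t^{-k}z_1\in(t^kL_n[[t]])^{\perp}=t^{-k}L_n[[t]]$, i.e. $z_2\in t^kL_n[[t]]$ and $z_1\in L_n[[t]]$, whence $(D_{{\rm dif},n}^+)^{\perp}=D_{{\rm dif},n}^+$; transferring back through $\sigma_{-1}$ yields the same statements for $[\,,\,]_{\rm dif}$. (Alternatively, as the paper indicates, the lemma is a direct translation of Colmez's Lemme VI.4.16.)
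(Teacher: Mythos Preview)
Your argument is correct. The paper itself gives no proof of this lemma beyond the remark that it is ``une simple traduction'' of Colmez's lemme VI.4.16 in \cite{Cbigone}; what you have written is precisely the computation one carries out to unpack that reference. The crucial step is the identification $e_1\wedge e_2=\lambda t^{-k}$ with $\lambda\in L_{m(D)}^{\times}$, which you obtain correctly from the $\Gamma$-covariance of the wedge pairing together with the fact that $\det V$ is Hodge--Tate of weight~$k$ (so that $a\delta_D(a)=\det V(a)=a^{k}$ on a small enough subgroup of $\zpet$). Once this is known, both orthogonality statements reduce, via the nondegeneracy of the normalised trace form on $L_n$, to the elementary residue duality $(t^{c}L_n[[t]])^{\perp}=t^{-c}L_n[[t]]$ on $L_n((t))$, exactly as you write.

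One minor point worth making explicit: in order to conclude $\lambda\in L_{m(D)}$ (rather than merely $\lambda\in L_N$ for some possibly larger $N$), you need $\det V(a)=a^{k}$ already on $1+p^{m(D)}\zp$. This may require enlarging $m(D)$, which is allowed by the standing convention of Remarque~\ref{deb}. With this understood, $\lambda\in L_{m(D)}\subset L_n$ for every $n\geq m(D)$, and the perfectness argument for the scalar pairing on $L_n((t))$ goes through as stated.
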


  La trace de Tate $\frac{1}{p}{\rm Tr}_{L_{n+1}/L_n}$
  envoie $D_{\rm dif,n+1}^+/t^kN_{\rm dif,n+1}$ dans
  $D_{{\rm dif},n}^+/t^kN_{{\rm dif},n}$. Si
  $x=(x_n)_n\in \lim\limits_{{\longleftarrow}} D_{{\rm dif},n}^+/t^kN_{{\rm dif},n}$ (la limite est relativement
  aux traces de Tate) et si $y\in X_{\infty}^-= \lim\limits_{{\longrightarrow}} N_{{\rm dif},n}/
  D_{{\rm dif},n}^+$, on d\'{e}finit $[x,y]_{\rm dif}$ de la fa\c{c}on suivante:
  on choisit $n$ tel que $y\in N_{{\rm dif},n}/D_{{\rm dif},n}^+$, ainsi que des rel\`{e}vements
  $\hat{y}_n\in N_{{\rm dif},n}$ de $y$ et $\hat{x}_n\in D_{{\rm dif},n}^+$ de $x_n$ et on pose
  $[x,y]_{\rm dif}=[\hat{x}_n,\hat{y}_n]_{\rm dif}$.

\begin{lemma}
\label{acclimite}
  Cet accouplement est bien d\'{e}fini et non d\'{e}g\'{e}n\'{e}r\'{e}.
\end{lemma}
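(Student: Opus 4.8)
The plan is to split the statement into two essentially independent points: independence of $[x,y]_{\rm dif}$ from the chosen lifts at a \emph{fixed} level $n$, and independence from the level $n$ itself. The second point will at the same time yield non-degeneracy, so it is the real heart of the matter. Throughout I use the inclusions $t^kN_{{\rm dif},n}\subset D_{{\rm dif},n}^+\subset N_{{\rm dif},n}$ inside $D_{{\rm dif},n}^+[1/t]$, which are immediate from Proposition \ref{basedif} (since $e_1,t^ke_2$ is a basis of $D_{{\rm dif},n}^+$ while $e_1,e_2$ is a basis of $N_{{\rm dif},n}$).

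First I would dispose of lift-independence at a fixed $n$ using only Lemma \ref{orth}. Changing the lift $\hat y_n$ of $y$ modifies it by an element of $D_{{\rm dif},n}^+$; since $\hat x_n\in D_{{\rm dif},n}^+$ and $D_{{\rm dif},n}^+$ is its own orthogonal, the value is unchanged. Symmetrically, changing $\hat x_n$ modifies it by an element of $t^kN_{{\rm dif},n}$; since $\hat y_n\in N_{{\rm dif},n}$ and the orthogonal of $N_{{\rm dif},n}$ is exactly $t^kN_{{\rm dif},n}$, again nothing changes. Thus $[x,y]_{\rm dif}$ depends only on $x\bmod t^kN_{{\rm dif},n}$ and on $y\bmod D_{{\rm dif},n}^+$.

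The crux is the compatibility with the transition maps, namely the adjunction
$$[x,y]_{\rm dif}=\left[\tfrac{1}{p}{\rm Tr}_{L_{n+1}/L_n}(x),\,y\right]_{\rm dif},\qquad x\in D_{{\rm dif},n+1}^+,\ y\in N_{{\rm dif},n}.$$
Granting this, level-independence is immediate, because $\hat y_n\in N_{{\rm dif},n}\subset N_{{\rm dif},n+1}$ is itself a lift at level $n+1$ while $x_n=\frac1p{\rm Tr}_{L_{n+1}/L_n}(x_{n+1})$. To prove the adjunction I would pass through the residue formula: for $u,v$ at finite level the wedge $\sigma_{-1}(u)\wedge v$ lands in $L_\infty((t))$, and ${\rm Res}_{\zp}$ followed by ${\rm res}_0(\cdot\,dt)$ extracts $T_0$ of its $t^{-1}$-coefficient, so $[u,v]_{\rm dif}=T_0\big((\sigma_{-1}(u)\wedge v)_{-1}\big)$. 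Using the projective-system identity $T_0\circ\frac1p{\rm Tr}_{L_{n+1}/L_n}=T_0$ on $L_{n+1}$ (from transitivity of traces), the desired equality reduces to a $\Gamma$-equivariance computation: writing the trace as $\frac1p\sum_{g}g$ over $G={\rm Gal}(L_{n+1}/L_n)$ and combining $\sigma_a(u)\wedge\sigma_a(v)=a\delta_D(a)\,\sigma_a(u\wedge v)$ with $\sigma_a(t)=at$, one checks that the twisting factors $a\delta_D(a)$ and the $G$-action on $t,e_1,e_2$ (Proposition \ref{basedif}) cancel after applying $T_0$.

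The cleanest way to organise this last step, and the one I expect to be the genuine obstacle, is to read the adjunction as the statement that the transpose of the inclusion $N_{{\rm dif},n}/D_{{\rm dif},n}^+\hookrightarrow N_{{\rm dif},n+1}/D_{{\rm dif},n+1}^+$, with respect to the level-wise pairings, is the normalised Tate trace. Lemma \ref{orth} furnishes, at each level, a perfect pairing between the finite-dimensional $L_n$-spaces $N_{{\rm dif},n}/D_{{\rm dif},n}^+$ and $D_{{\rm dif},n}^+/t^kN_{{\rm dif},n}$: apply the elementary fact that for $A\subset B$ in a space with perfect pairing one gets a perfect pairing of $B/A$ with $A^\perp/B^\perp$, here with $A=D_{{\rm dif},n}^+$ (self-orthogonal) and $B=N_{{\rm dif},n}$ (orthogonal $t^kN_{{\rm dif},n}$). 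The adjoint $\iota_n^\ast$ of the inclusion is then $\Gamma$-equivariant and, since $[\,,\,]_{\rm dif}$ is a single global formula that restricts consistently across levels, it splits the inclusion; the delicate point is to supply enough linearity to invoke the characterisation of $\frac1p{\rm Tr}_{L_{n+1}/L_n}$ as the unique such $\Gamma$-equivariant projector and thereby identify $\iota_n^\ast$ with the Tate trace. Once the transition maps are shown to be mutually adjoint, both conclusions follow simultaneously: the pairing on $\varinjlim N_{{\rm dif},n}/D_{{\rm dif},n}^+$ against $\varprojlim D_{{\rm dif},n}^+/t^kN_{{\rm dif},n}$ is well defined, and—each level pairing being perfect and the Tate traces surjective—it is non-degenerate.
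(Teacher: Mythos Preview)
Your overall architecture matches the paper's: independence of lifts at a fixed level via Lemma~\ref{orth}, then level-independence via an adjunction between the inclusion $N_{{\rm dif},n}\hookrightarrow N_{{\rm dif},n+1}$ and the normalised trace, then non-degeneracy from the level-wise perfectness. Where you diverge is in the execution of the adjunction step, and there you make it harder than it is.

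The paper disposes of the identity $[x,y]_{\rm dif}=[\tfrac1p{\rm Tr}_{L_{n+1}/L_n}(x),y]_{\rm dif}$ (for $x\in D_{{\rm dif},n+1}^+$, $y\in N_{{\rm dif},n}$) in one line: ``imm\'ediat sur la formule''. Concretely, your transitivity remark $T_0=T_0\circ T_n$ is already the whole point. Since $y$ has coefficients in $L_n((t))$ and $\cdot\wedge y$ is $L_n((t))$-bilinear, while $\sigma_{-1}$ commutes with $T_n$ ($\Gamma$ being abelian), one gets $T_n\big((\sigma_{-1}(x)\wedge y)_{-1}\big)=(\sigma_{-1}(\tfrac1p{\rm Tr}\,x)\wedge y)_{-1}$ coefficient-wise, and applying $T_0$ finishes. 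No twisting factors $a\delta_D(a)$ enter: those come from the \emph{full} $\Gamma$-action on $\tilde D_{\rm dif}$, whereas the trace in play here is the $L_{n+1}/L_n$-trace on \emph{coefficients} in the basis $e_1,t^ke_2$. When you propose to ``write the trace as $\tfrac1p\sum_g g$'' and invoke $\sigma_a(u)\wedge\sigma_a(v)=a\delta_D(a)\,\sigma_a(u\wedge v)$, you are conflating these two actions; the Galois-average on $D_{{\rm dif},n+1}^+$ does \emph{not} coincide with the coefficient trace, since $\sigma_a$ moves $t$ and (in the non-de Rham case) $e_2$.

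Your second, more structural route---identifying the adjoint of the inclusion with the Tate trace via a ``unique $\Gamma$-equivariant projector'' characterisation---is not wrong in spirit, but as written it has a gap: you have not explained why the adjoint $\iota_n^\ast$ is $L_n$-linear (a priori it is only $L$-linear), nor why the uniqueness statement for Tate's normalised traces on $L_\infty$ transports to this module setting. Since the direct computation above already gives the adjunction, this detour buys nothing here.
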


\begin{proof}
 Le fait que $[x,y]_{\rm dif}$ ne d\'{e}pend pas des choix de
 $\hat{x}_n$ ou $\hat{y}_n$ d\'{e}coule du lemme
 \ref{orth}. Pour montrer qu'il ne d\'{e}pend pas de $n$, il suffit de voir
 que si $\hat{y}_{n+1}-\hat{y}_n\in D_{\rm dif,n+1}^+$
 et $\hat{x}_{n+1}, \hat{x}_n$ s'envoient sur
 $x_{n+1}, x_n$, alors $[\hat{x}_{n+1}, \hat{y}_{n+1}]_{\rm dif}=
 [\hat{x}_n,\hat{y}_n]_{\rm dif}$. Or, $\hat{y}_{n+1}-\hat{y}_n$
 \'{e}tant dans $D_{\rm dif,n+1}^+$, il est orthogonal \`{a}
 $\hat{x}_{n+1}$, donc il reste \`{a} voir que
 $\hat{y}_n$ est orthogonal \`{a} $\hat{x}_{n+1}-\hat{x}_n$.
 Comme $p^{-1}{\rm Tr}_{L_{n+1}/L_n}(x_{n+1})=x_n$, il reste
 en fait \`{a} voir que si $y\in N_{{\rm dif},n}$ et
 $x\in D_{\rm dif,n+1}^+$, alors $p^{-1}{\rm Tr}_{L_{n+1}/L_n}(x)-x$
 est orthogonal \`{a} $y$. Cela est imm\'{e}diat sur la formule d\'{e}finissant
 l'accouplement. Le fait que l'accouplement est non d\'{e}g\'{e}n\'{e}r\'{e} suit du
 lemme \ref{orth}.

\end{proof}

\subsection{La restriction de $\Pi$ au mirabolique}

Rappelons que $H={\rm Gal}(\overline{\qp}/ {\qp}(\mu_{p^{\infty}}))={\rm Ker}\chi$.
Si $b\in\qp$, soit $n$ tel que $p^nb\in\zp$ et posons $$[(1+T)^b]=\varphi^{-n}\left((1+T)^{p^nb}\right)=\varphi^{-n}\left(\sum_{k\geq 0} \binom{p^nb}{k}T^k\right).$$
On obtient ainsi un \'{e}l\'{e}ment de $(\mathbf{\tilde{B}}^+)^H$, qui ne d\'{e}pend pas du choix
de $n$. L'application $b\to [(1+T)^b]$ est un analogue $p$-adique de $x\to e^{2i\pi x}$ et la transform\'{e}e de Fourier $\mu\to \int_{\qp} [(1+T)^x]\mu$ identifie $L\otimes_{\qp}(\mathbf{\tilde{B}}^+)^H$
\`{a} l'espace des mesures sur $\qp$, nulles
\`{a} l'infini (voir la proposition IV.3.1 de~\cite{Cmirab}).
Comme $\varphi$ est inversible sur $\tilde{D}^+=(\mathbf{\tilde{B}}^+\otimes_{\qp} V)^H$ et $\tilde{D}=(\mathbf{\tilde{B}}\otimes_{\qp} V)^H$, on peut munir ces deux modules
 d'une action du mirabolique $P$ gr\^{a}ce \`{a} (si $k\in\mathbf{Z}$, $a\in\zpet$, $b\in\qp$)
   $$\left(\begin{smallmatrix} p^ka & b \\0 & 1\end{smallmatrix}\right) \tilde{z}=[(1+T)^{b}]\varphi^{k}(\sigma_a(\tilde{z})).$$
D'apr\`{e}s~\cite[lemme IV.1.2]{Cmirab}, tout \'{e}l\'{e}ment $z$ de
$\tilde{D}$ s'\'{e}crit de mani\`{e}re unique sous la forme $z=\sum_{i\in I} [(1+T)^i]z_i$, o\`{u} $I\subset \qp$ est un
syst\`{e}me de repr\'{e}sentants de $\qp/\zp$ et $z_i\in D$ tend vers $0$ quand $i\to \infty$ dans $\qp$. Soit
$I_n=I\cap p^{-n}\zp$ et notons
 $$x_n(z)=\sum_{i\in I_n} \left(\begin{smallmatrix} 1 & i \\0 & 1\end{smallmatrix}\right) z_i\in
D\boxtimes p^{-n}\zp\subset D\boxtimes \p1.$$ D'apr\`{e}s le lemme II.1.16 de~\cite{Cbigone}, la suite
$(x_n(z))_{n\geq 0}$ d'\'{e}l\'{e}ments de $D\boxtimes \p1$ converge vers un \'{e}l\'{e}ment
que l'on note encore $z\in D\boxtimes \p1$ et l'application $\tilde{D}\to D\boxtimes\p1$ ainsi obtenue
est une injection $P$-\'{e}quivariante. Le corollaire II.2.8 de~\cite{Cbigone} montre que cette inclusion
induit une inclusion $\tilde{D}^+\subset \check{\Pi}^*$ (en fait, le lemme cit\'{e} montre que l'on a m\^{e}me
$\check{\Pi}^*=\tilde{D}^++w\cdot\tilde{D}^+$). Le r\'{e}sultat suivant~(\cite{Cbigone}, cor.
    II.2.9) sera tr\`{e}s utile pour la suite.

  \begin{proposition}
  \label{mirab}
   L'inclusion de $\tilde{D}$ dans $D\boxtimes \p1$ induit un
 isomorphisme de $P$-modules de Banach $\tilde{D}/\tilde{D}^+\simeq \Pi$.
  \end{proposition}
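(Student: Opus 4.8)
The plan is to realize the map in the statement as the composite of the $P$-equivariant inclusion $\iota\colon\tilde{D}\hookrightarrow D\boxtimes\p1$ constructed just above with the quotient map $D\boxtimes\p1\twoheadrightarrow\Pi$ coming from Theorem \ref{correspondance}. By Remark \ref{check} the kernel of the quotient map is exactly $\check{\Pi}^*$ (after untwisting by $\delta_D\circ\det$), and we have just recorded the two facts $\iota(\tilde{D}^+)\subset\check{\Pi}^*$ and $\check{\Pi}^*=\tilde{D}^++w\cdot\tilde{D}^+$. Hence the composite annihilates $\tilde{D}^+$ and factors through a continuous $P$-equivariant map $\bar\iota\colon\tilde{D}/\tilde{D}^+\to\Pi$. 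A direct computation of its kernel and image shows that $\bar\iota$ is bijective if and only if the following two purely algebraic identities hold inside $D\boxtimes\p1$: first, $\tilde{D}+w\cdot\tilde{D}^+=D\boxtimes\p1$, which yields surjectivity of $\bar\iota$; and second, $\tilde{D}\cap(w\cdot\tilde{D}^+)\subset\tilde{D}^+$, which (combined with the obvious $\tilde{D}^+\subset\tilde{D}\cap\check{\Pi}^*$) yields $\ker\bar\iota=0$. Indeed, for injectivity one takes $z\in\tilde{D}\cap\check{\Pi}^*$, writes $z=a+w\cdot b$ with $a,b\in\tilde{D}^+$, and observes that $w\cdot b=z-a\in\tilde{D}$ forces $w\cdot b\in\tilde{D}\cap(w\cdot\tilde{D}^+)$, so the second identity gives $z=a+w\cdot b\in\tilde{D}^+$.

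For the surjectivity identity I would use the description of an element of $D\boxtimes\p1$ as a sum $z_1+w\cdot z_2$ with $z_1,z_2\in D$, together with the covering of $\p1(\qp)$ by $\zp$ and $w\cdot\zp$ glued along $\zpet$. The embedding $\iota$ realizes $\tilde{D}$ as the $P$-stable module built from the chart $D=D\boxtimes\zp$ by letting $\varphi^{\pm k}$, $\sigma_a$ and multiplication by $[(1+T)^b]$ sweep $\zp$ out to all of $\qp=\bigcup_n p^{-n}\zp$; concretely this is encoded in the convergence of the partial sums $x_n(z)$. Since $w\cdot\tilde{D}^+$ supplies the sections meeting a neighborhood of the point $\infty=w\cdot 0$, I would split a general $z_1+w\cdot z_2$ according to this cover, subtract off the contribution near $\infty$ into $w\cdot\tilde{D}^+$, and push the remaining part into $\iota(\tilde{D})$, using that $\check{\Pi}^*=\tilde{D}^++w\cdot\tilde{D}^+$ is closed to justify the limits involved.

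I expect the second identity, $\tilde{D}\cap(w\cdot\tilde{D}^+)\subset\tilde{D}^+$, to be the main obstacle: it is not a formal consequence of the inclusions at hand, and it is precisely the point at which the delicate analytic content of Colmez's construction of $w_D$ on $D^{\psi=0}$ — the convergence of the series defining it and its continuity as an involution — must genuinely be invoked, rather than mere bookkeeping. Once $\bar\iota$ is known to be an algebraic isomorphism of $P$-modules, the final step is to upgrade it to an isomorphism of Banach modules: both $\tilde{D}/\tilde{D}^+$, equipped with its quotient topology, and $\Pi$ are $L$-Banach spaces and $\bar\iota$ is a continuous bijection, so the open mapping theorem makes it a topological isomorphism, automatically compatible with the $P$-actions.
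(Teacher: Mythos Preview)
The paper does not give a proof of this proposition: it is simply cited as \cite[cor.~II.2.9]{Cbigone} (see the sentence immediately preceding the statement), so there is no argument in the paper to compare your sketch against.

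That said, your reduction is sound and is essentially how one would unpack Colmez's corollary from the surrounding statements. Using the inputs already quoted by the paper (namely $\tilde{D}^+\subset\check{\Pi}^*$ and $\check{\Pi}^*=\tilde{D}^+ + w\cdot\tilde{D}^+$, both from \cite[cor.~II.2.8]{Cbigone}), the bijectivity of $\bar\iota$ is indeed equivalent to the two identities you isolate, and the open mapping theorem then handles the topological upgrade. You are also right that the identity $\tilde{D}\cap(w\cdot\tilde{D}^+)\subset\tilde{D}^+$ is the nontrivial point: in Colmez's treatment it is not proved via the series for $w_D$ but rather through the structure of the sheaf $U\mapsto\tilde{D}\boxtimes U$ on $\qp$ and the support considerations that identify $\tilde{D}^+$ with the sections over $\zp$ (so an element of $w\cdot\tilde{D}^+$, supported at $\infty$, that also lies in $\tilde{D}$, supported away from $\infty$, must have trivial restriction off $\zp$). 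Your surjectivity paragraph is on the right track but would need the density/limit argument made precise; again this is packaged in \cite{Cbigone}. In short: your outline is correct as a roadmap, but the paper treats the result as a black box, and a complete proof requires the machinery of \cite[\S II.1--II.2]{Cbigone} rather than anything developed in the present article.
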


\subsection{Le mod\`{e}le de Kirillov de Colmez}\label{KirCol}

\begin{definition}\label{UPalg}
\label{UPalg}
a)  $v\in\Pi$ est dit $U$-alg\'{e}brique s'il existe $n$ tel que
$\left(\left(\begin{smallmatrix} 1 & p^n \\0 & 1\end{smallmatrix}\right)-1\right)^kv=0$, ce qui \'{e}quivaut \`{a} ce que
$x\to \left(\begin{smallmatrix} 1 & x \\0 & 1\end{smallmatrix}\right) v$ soit localement polynomiale de degr\'{e} plus petit que $k$.
On note $\Pi^{U-{\rm alg}}$ l'espace des vecteurs $U$-alg\'{e}briques.

b)  $v\in\Pi$ est dit $P$-alg\'{e}brique si $v$ est $U$-alg\'{e}brique et s'il existe $n$ tel que $\lambda_n(v)=0$, o\`{u}
$$\lambda_n=\prod_{i=0}^{k-1}\left(\left(\begin{smallmatrix} 1+p^n & 0 \\0 & 1\end{smallmatrix}\right)-(1+p^n)^i \right)\in O_L [{\rm GL}_2(\zp)].$$
Cela \'{e}quivaut \`{a} ce que les applications $x\to \left(\begin{smallmatrix} 1 & x \\0 & 1\end{smallmatrix}\right) v$ et $x\to \left(\begin{smallmatrix} x & 0 \\0 & 1\end{smallmatrix}\right)v$ soient localement polynomiales de degr\'{e} plus petit que $k$. On note $\Pi^{P-{\rm alg}}$ l'espace des vecteurs  $P$-alg\'{e}briques.

\end{definition}

  Soit $v\in \Pi$ et soit\footnote{Rappelons que d'apr\`{e}s la proposition \ref{mirab}, la repr\'{e}sentation $\Pi$ restreinte \`{a} $P$ est isomorphe \`{a}
$\tilde{D}/\tilde{D}^+$.} $\tilde{z}\in \tilde{D}$ un rel\`{e}vement de $v$.
Comme $\left(\begin{smallmatrix} 1 & b \\0 & 1\end{smallmatrix}\right)$
  agit sur $\tilde{D}$ par multiplication par $(1+T)^b$ pour $b\in\zp$,
  on a $v\in\Pi^{U-\rm alg}$ si et seulement si
  $\tilde{z}\in \frac{1}{\varphi^n(T)^k}\cdot\tilde{D}^+$
  pour un certain $n$. Puisque $\varphi^n(T)$ divise $t$ dans $\mathbf{B}_{\rm dR}^+$
et $\mathbf{\tilde{B}}^+\subset \mathbf{B}_{\rm dR}^+$, on obtient
  $\tilde{z}\in t^{-k}\tilde{D}_{\rm dif}^+$.

  Comme $\left(\begin{smallmatrix} x & 0 \\0 & 1\end{smallmatrix}\right) \tilde{z}\pmod {\tilde{D}^+}$ reste
 $U$-alg\'{e}brique pour tout $x\in \qpet$, cela nous fournit une application
   $$\phi_v: \qpet\to t^{-k} \tilde{D}^+_{\rm dif}/\tilde{D}^+_{\rm dif},\quad
   \phi_v(x)=\left(\begin{smallmatrix} x & 0 \\0 & 1\end{smallmatrix}\right) \tilde{z}\pmod{ \tilde{D}_{\rm dif}^+}.$$
   Le r\'{e}sultat suivant en r\'{e}sume les propri\'{e}t\'{e}s essentielles.

  \begin{proposition}
  \label{Kir}
   Soit $v\in\Pi^{U-{\rm alg}}$.

  1) $\phi_v$ est bien d\'{e}finie (i.e. ne d\'{e}pend pas du choix de $\tilde{z}$), \`{a} support compact dans $\qp$
 et $v\to\phi_v$ est injective.

  2) On a $\sigma_a(\phi_v(x))=\phi_v(ax)$ pour $a\in \zpet$
  et $x\in \qpet$. De plus, si $g=\left(\begin{smallmatrix} a & b \\0 & d\end{smallmatrix}\right)$, on a
  $$\phi_{gv}(x)=\delta_D(d)[(1+T)^{\frac{bx}{d}}]\phi_v\left(\frac{ax}{d}\right).$$

  3) $\Pi^{P-{\rm alg}}$ est stable par le Borel sup\'{e}rieur et
$$\Pi^{P-{\rm alg}}=\{v\in \Pi^{U-{\rm alg}}|\quad {\rm Im}(\phi_v)\subset X_{\infty}^-\},$$ o\`{u} $X_{\infty}^-$ est la r\'{e}union croissante des sous-espaces
$N_{{\rm dif},n}/D_{{\rm dif},n}^+$ de $t^{-k}\tilde{D}_{\rm \rm dif}^+/\tilde{D}_{\rm dif}^+$.
  \end{proposition}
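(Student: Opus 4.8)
The plan is to reduce the three statements to the explicit formulas for the action of the mirabolic $P$ on $\tilde{D}$ (given just before proposition~\ref{mirab}), to the $\Gamma$-equivariance of the localization $\tilde{D}^+\to\tilde{D}_{\rm dif}^+$ induced by $\mathbf{\tilde{B}}^+\subset\mathbf{B}_{\rm dR}^+$, and to a Sen-type decompletion of $\tilde{D}_{\rm dif}^+$. For part~1, the indeterminacy of a lift $\tilde{z}$ of $v$ lies in $\tilde{D}^+$, which is stable under $\left(\begin{smallmatrix} x & 0 \\ 0 & 1\end{smallmatrix}\right)$ for $x\in\qpet$ (the Frobenius is bijective on $\tilde{D}^+$ and $\Gamma$ preserves it) and maps into $\tilde{D}_{\rm dif}^+$; hence $\phi_v(x)$ is well defined. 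Writing $\tilde{z}=\sum_{i\in I}[(1+T)^i]z_i$ with $z_i\to 0$ and using $\left(\begin{smallmatrix} p^{-m}a & 0 \\ 0 & 1\end{smallmatrix}\right)=\varphi^{-m}\sigma_a$, the decay $z_i\to 0$ confines the contributions, forcing $\left(\begin{smallmatrix} x & 0 \\ 0 & 1\end{smallmatrix}\right)\tilde{z}$ to localize into $\tilde{D}_{\rm dif}^+$ once $v_p(x)$ is very negative, which gives the compact support. Injectivity will follow from the fact that an element of $\tilde{D}$ whose localization lies in $\tilde{D}_{\rm dif}^+$ already lies in $\tilde{D}^+$, so that $\phi_v=0$ forces $\tilde{z}\in\tilde{D}^+$, i.e. $v=0$ via proposition~\ref{mirab}.

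For part~2, the identity $\sigma_a(\phi_v(x))=\phi_v(ax)$ follows at once from $\left(\begin{smallmatrix} ax & 0 \\ 0 & 1\end{smallmatrix}\right)=\sigma_a\circ\left(\begin{smallmatrix} x & 0 \\ 0 & 1\end{smallmatrix}\right)$ and the $\Gamma$-equivariance of localization. For the general formula I would factor $\left(\begin{smallmatrix} x & 0 \\ 0 & 1\end{smallmatrix}\right)\left(\begin{smallmatrix} a & b \\ 0 & d\end{smallmatrix}\right)=\left(\begin{smallmatrix} 1 & bx/d \\ 0 & 1\end{smallmatrix}\right)\left(\begin{smallmatrix} ax/d & 0 \\ 0 & 1\end{smallmatrix}\right)\left(\begin{smallmatrix} d & 0 \\ 0 & d\end{smallmatrix}\right)$ and read off the contribution of each factor on $\tilde{D}$: the centre acts by $\delta_D(d)$, the unipotent by multiplication by $[(1+T)^{bx/d}]$, and the remaining torus factor produces $\phi_v(ax/d)$ after localization.

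The heart of the matter is part~3. By parts~1 and~2, $\left(\begin{smallmatrix} 1+p^n & 0 \\ 0 & 1\end{smallmatrix}\right)$ acts on $\phi_v$ through $\sigma_{1+p^n}$, so by injectivity $\lambda_n(v)=0$ is equivalent to $\prod_{i=0}^{k-1}(\sigma_{1+p^n}-(1+p^n)^i)\phi_v(x)=0$ for all $x$. I would then compute, for $a\in 1+p^{m(D)}\zp$, the action of $\sigma_a$ on $t^{-k}D_{{\rm dif},n}^+/D_{{\rm dif},n}^+$ in the basis of proposition~\ref{basedif}: since $\sigma_a(e_1)=e_1$, $\sigma_a(e_2)=e_2+\varepsilon\log a\cdot e_1$ and $\sigma_a(t)=at$, the eigenvalues on the $e_2$-part are $a^0,\dots,a^{k-1}$ while on the $e_1$-part they are $a^{-1},\dots,a^{-k}$; as $a$ has infinite order these two sets are disjoint, so the kernel of $\prod_{i=0}^{k-1}(\sigma_a-a^i)$ is exactly the $e_2$-part $N_{{\rm dif},n}/D_{{\rm dif},n}^+$. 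This yields the inclusion $\supset$ at once: if ${\rm Im}(\phi_v)\subset X_\infty^-$ then each $\phi_v(x)$ is killed by $\prod_{i=0}^{k-1}(\sigma_{1+p^n}-(1+p^n)^i)$ for $n\geq m(D)$, whence $\lambda_n(v)=0$ and $v$ is $P$-algebraic.

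For the reverse inclusion, $P$-algebraicity makes each $\phi_v(x)$ a vector of $t^{-k}\tilde{D}_{\rm dif}^+/\tilde{D}_{\rm dif}^+$ on which $\sigma_{1+p^n}$ acts with finitely many eigenvalues among the $(1+p^n)^i$; the main obstacle is to descend such a $\Gamma$-locally algebraic vector to finite level, which is exactly a Sen-style decompletion statement for $\tilde{D}_{\rm dif}^+$ and is where I expect the real work to be (the completed module $\tilde{D}_{\rm dif}^+$ is strictly larger than $\bigcup_m D_{{\rm dif},m}^+$, and only its locally algebraic vectors descend). Once the vector is placed at some finite level $m$, the eigenvalue computation above identifies it with an element of $N_{{\rm dif},m}/D_{{\rm dif},m}^+\subset X_\infty^-$, giving $\subset$. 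Finally, stability of $\Pi^{P-{\rm alg}}$ under the upper Borel follows from this characterization together with part~2: multiplication by $[(1+T)^{bx/d}]$ is a unit of $\mathbf{B}_{\rm dR}^+$ that does not mix $e_1$ and $e_2$, so it sends the $e_2$-part into itself modulo $D_{{\rm dif}}^+$ and hence preserves $X_\infty^-$, while $U$-algebraicity is preserved because the Borel normalizes $U$.
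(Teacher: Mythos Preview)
The paper does not give a proof of this proposition: it simply refers to lemmes~VI.5.4 and~VI.5.5 of Colmez~\cite{Cbigone}. Your outline is in the same spirit as those lemmas (mirabolic formulas on $\tilde D$, $\Gamma$-equivariance of the inclusion $\tilde D^+\subset\tilde D_{\rm dif}^+$, the eigenvalue computation on $t^{-k}D_{{\rm dif},n}^+/D_{{\rm dif},n}^+$, and a Sen-type decompletion), and parts~2 and~3 are essentially correct; in particular your identification of the kernel of $\prod_{i=0}^{k-1}(\sigma_a-a^i)$ with $N_{{\rm dif},n}/D_{{\rm dif},n}^+$ is right, and you are also right that the decompletion step (passing from $\tilde D_{\rm dif}^+$ down to some $D_{{\rm dif},m}^+$) is where the genuine work sits.

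There is, however, a real gap in your treatment of part~1. For compact support, the decay $z_i\to 0$ in the expansion $\tilde z=\sum_i[(1+T)^i]z_i$ is not what is used; the correct mechanism is that $U$-algebraicity forces $\tilde z\in\varphi^n(T)^{-k}\tilde D^+$ for some $n$, and for $m<-n$ the element $\varphi^{n+m}(T)$ becomes a \emph{unit} in $\mathbf{B}_{\rm dR}^+$ (its $\theta$-value is $\varepsilon^{(-n-m)}-1\neq 0$), so $\varphi^m(\tilde z)$ already lands in $\tilde D_{\rm dif}^+$ and $\phi_v(p^m)=0$. More seriously, your injectivity criterion is too weak: the assertion ``an element of $\tilde D$ whose localization lies in $\tilde D_{\rm dif}^+$ already lies in $\tilde D^+$'' is neither well posed (general elements of $\tilde D$ do not localize into $\mathbf{B}_{\rm dR}$) nor sufficient. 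Knowing only $\phi_v(1)=0$ gives $\tilde z\in(\omega/\varphi^n(T))^k\tilde D^+$, which is not contained in $\tilde D^+$ when $n\geq 1$. One must use $\phi_v(p^m)=0$ for \emph{all} $m\geq -n$, which yields $\varphi^n(T)^k\tilde z\in\bigcap_{m\leq n}\varphi^m(\omega)^k\tilde D^+$; it is this infinite intersection, together with the factorization of $\varphi^n(T)$ in terms of the $\varphi^m(\omega)$, that forces $\tilde z\in\tilde D^+$. Finally, in your Borel-stability argument the phrase ``does not mix $e_1$ and $e_2$'' is slightly misleading: the point is rather that $[(1+T)^{bx/d}]\in L_m[[t]]$ for $m$ large (it equals a $p$-power root of unity times $e^{tbx/d}$), hence multiplication by it preserves both $N_{{\rm dif},m}$ and $D_{{\rm dif},m}^+$.
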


\begin{proof}
  Voir les lemmes VI.5.4
  et VI.5.5 de~\cite{Cbigone}.

\end{proof}

  \begin{definition}
  \label{Picp}
   Soit $\Pi_{c}^{P-{\rm alg}}$ l'espace des vecteurs $v\in \Pi^{P-{\rm alg}}$
   tels que $\phi_v$ soit \`{a} support compact dans $\qpet$
   (i.e. $\phi_v$ est nulle sur $p^n\zp-\{0\}$ pour tout $n$ assez grand).
  \end{definition}

\subsection{Dualit\'{e} et mod\`{e}le de Kirillov}

   Le corollaire II.7.2 et le lemme V.2.15 de~\cite{Cbigone}
  montrent que  ${\rm{Res}}_{\zp} ((\check{\Pi}^{\rm an})^*)\subset
  D^{]0,r_{m(D)}]}$. On peut donc d\'{e}finir pour $n\geq m(D)$, $l\in\mathbf{Z}$ et $z\in (\check{\Pi}^{\rm an})^*$
 la quantit\'{e}
 $$i_{l,n}(z)=\varphi^{-n}\left( {\rm Res}_{\zp}\left( \left(\begin{smallmatrix} p^{n-l} & 0 \\0 & 1\end{smallmatrix}\right)z\right)\right)
 \in D_{{\rm dif},n}^+.$$
 Pour tout $z\in D^{]0,r_{n+1}]}$ et $n\geq m(D)$, on a $$\varphi^{-n}(\psi(z))=p^{-1}{\rm Tr}_{L_{n+1}/L_n}\varphi^{-(n+1)}(z).$$
 De plus, pour tout $x\in D_{{\rm rig}}\boxtimes \p1$ on a
  $${\rm Res}_{\zp}\left(\left(\begin{smallmatrix} p^{-1} & 0 \\0 & 1\end{smallmatrix}\right)x\right)=\psi({\rm Res}_{\zp}(x)).$$
On en d\'{e}duit que
  $\frac{1}{p}{\rm Tr}_{L_{n+1}/L_n}(i_{l,n+1}(z))=i_{l,n}(z)$ pour
  $n\geq m(D)$, $l\in\Z$ et $z\in (\check{\Pi}^{\rm an})^*$.
  On dispose donc d'un \'{e}l\'{e}ment\footnote{Les applications de transition dans $ \lim\limits_{{\longleftarrow}} D_{{\rm dif},n}^+/t^kN_{{\rm dif},n}$
  sont les traces de Tate normalis\'{e}es.}
  $$i_l^+(z)=(i_{l,n}(z)\pmod {t^kN_{{\rm dif},n}})_{n\geq m(D)}\in \lim\limits_{{\longleftarrow}} D_{{\rm dif},n}^+/t^kN_{{\rm dif},n}.$$
 Cette observation et le 3) de la proposition \ref{Kir}  donnent un sens \`{a} l'\'{e}nonc\'{e} suivant
 (pour la d\'{e}finition des accouplements $[\,\,,\,]_{\p1}$ et $[\,\,,\,]_{\rm dif}$ voir \ref{acp1}).

  \begin{proposition}
  \label{deep}
    $\Pi_c^{P-{\rm alg}}$ est contenu dans $\Pi^{\rm an}$ et pour tout
    $z\in (\check{\Pi}^{\rm an})^*$ et $v\in \Pi_c^{P-{\rm alg}}$ on a
    $$[z,v]_{\p1}=\sum_{l\in\mathbf{Z}}\delta_D(p^l)[i_l^+(z), \phi_{v}(p^{-l})]_{\rm dif}.$$
  \end{proposition}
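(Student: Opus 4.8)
The plan is to prove both assertions at once by lifting $v$ to an explicit element of $D_{\rm rig}\boxtimes\p1$ and computing $[z,v]_{\p1}$ directly, the whole identity being reduced by equivariance to a single $\zpet$-orbit. First note that, since $(\check{\Pi}^{\rm an})^*$ is its own orthogonal in $D_{\rm rig}\boxtimes\p1$ (second part of Proposition \ref{acc}), the quantity $[z,\hat v]_{\p1}$ is independent of the chosen lift $\hat v\in D_{\rm rig}\boxtimes\p1$ of a class $v\in\Pi^{\rm an}=(D_{\rm rig}\boxtimes\p1)/(\check{\Pi}^{\rm an})^*$. So I would begin by showing that $v\in\Pi_c^{P-{\rm alg}}$ admits such a lift, which simultaneously gives Part 1. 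Write $v=\sum_l v_l$ (a finite sum, as $\phi_v$ has compact support in $\qpet$), where $\phi_{v_l}$ is supported on the single orbit $p^{-l}\zpet$. By the third part of Proposition \ref{Kir} the value $\phi_{v_l}(p^{-l})$ lies in $N_{{\rm dif},n}/D_{{\rm dif},n}^+$ for some $n$; since $N_{{\rm dif},n}$ descends from $D_{\rm rig}$ through the localization map $\varphi^{-n}$ and Proposition \ref{mirab} identifies $\Pi|_P$ with $\tilde D/\tilde D^+$, each $v_l$ lifts to $D_{\rm rig}\boxtimes\p1$ and is therefore locally analytic.

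Next I would reduce to the orbit of $1$. Writing $v_l=g_l w_l$ with $g_l=\left(\begin{smallmatrix}p^l&0\\0&1\end{smallmatrix}\right)$, the second part of Proposition \ref{Kir} gives $\phi_{w_l}(1)=\phi_{v_l}(p^{-l})=\phi_v(p^{-l})$, with $\phi_{w_l}$ supported on $\zpet$, and $w_l\in\Pi_c^{P-{\rm alg}}$ since the Borel preserves $P$-algebraicity. The equivariance $[gx,gy]_{\p1}=\delta_D(\det g)[x,y]_{\p1}$ (first part of Proposition \ref{acc}) then yields
$$[z,v_l]_{\p1}=\delta_D(p^l)\,\bigl[\left(\begin{smallmatrix}p^{-l}&0\\0&1\end{smallmatrix}\right)z,\ w_l\bigr]_{\p1}.$$
Moreover $i_0^+\!\left(\left(\begin{smallmatrix}p^{-l}&0\\0&1\end{smallmatrix}\right)z\right)=i_l^+(z)$, straight from the definition of $i_{l,n}$ and $\left(\begin{smallmatrix}p^n&0\\0&1\end{smallmatrix}\right)\left(\begin{smallmatrix}p^{-l}&0\\0&1\end{smallmatrix}\right)=\left(\begin{smallmatrix}p^{n-l}&0\\0&1\end{smallmatrix}\right)$. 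Summing over $l$, the claimed formula is thus reduced to the base case
$$[z',w]_{\p1}=[i_0^+(z'),\phi_w(1)]_{\rm dif}$$
for $z'\in(\check{\Pi}^{\rm an})^*$ and $w\in\Pi_c^{P-{\rm alg}}$ with $\phi_w$ supported on $\zpet$.

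For the base case I would choose a lift $\hat w=i_{\zp}(y)$ of $w$ with $y\in D_{\rm rig}$ supported on $\zpet$, coming from the mirabolic identification. Expanding the definition $[(x_1,x_2),(y_1,y_2)]_{\p1}=[x_1,y_1]+[\varphi\psi(x_2),\varphi\psi(y_2)]$ and using that $\frac{dT}{1+T}=dt$, the main term $[x_1,y]={\rm res}_0\!\left((\sigma_{-1}(x_1)\wedge y)\frac{dT}{1+T}\right)$ becomes, after applying the localization $\varphi^{-n}$ and the compatibility of ${\rm res}_0$ with the normalized Tate traces ${\rm Res}_{\zp}$ of Section \ref{Tate}, exactly $[i_{0,n}(z'),\hat y_n]_{\rm dif}$ for a lift $\hat y_n\in D_{{\rm dif},n}^+$ of $\phi_w(1)$. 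Lemma \ref{orth} guarantees this is independent of the chosen lifts of both arguments modulo $t^kN_{{\rm dif},n}$ and $D_{{\rm dif},n}^+$, and Lemma \ref{acclimite} identifies the common value, as $n\to\infty$, with $[i_0^+(z'),\phi_w(1)]_{\rm dif}$.

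The main obstacle is precisely this base-case residue computation: one must check that the $\p1$-pairing, built from $T$-residues over $D_{\rm rig}$, transports through $\varphi^{-n}$ into the $t$-residue pairing $[\,,\,]_{\rm dif}$ over $\tilde D_{\rm dif}$, and in particular control the second summand $[\varphi\psi(x_2),\varphi\psi(y_2)]_{\p1}$ coming from the $w\cdot\zp$ chart, showing that it is absorbed by the orthogonality of Lemma \ref{orth} rather than contributing spuriously. The second delicate point is matching the limit over $n$ defining $i_0^+$ with the normalized Tate traces built into $[\,,\,]_{\rm dif}$, which is where the compatibility $\tfrac1p{\rm Tr}_{L_{n+1}/L_n}\circ\varphi^{-(n+1)}=\varphi^{-n}\circ\psi$ recorded before Proposition \ref{deep} does the work.
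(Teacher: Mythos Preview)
The paper does not give a self-contained argument here: its proof is a citation to \cite[Prop.~VI.5.12]{Cbigone}, with the remark that the key input is Section VI.1 of that paper. So there is no detailed ``paper's approach'' to compare against; your sketch is an attempt to reconstruct what Colmez does.

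Your reduction to a single $\zpet$-orbit via the equivariance of $[\,,\,]_{\p1}$ is correct and is indeed the natural first move; the identification $i_0^+\!\left(\left(\begin{smallmatrix}p^{-l}&0\\0&1\end{smallmatrix}\right)z\right)=i_l^+(z)$ is also fine.

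There is, however, a genuine gap in how you handle the lifts. You claim that ``$N_{{\rm dif},n}$ descends from $D_{\rm rig}$ through $\varphi^{-n}$'' and that therefore each $v_l$ lifts to $D_{\rm rig}\boxtimes\p1$; neither step is justified. The elements $e_1,e_2$ of Proposition~\ref{basedif} live in $D_{{\rm dif},m(D)}^+[1/t]$ and are \emph{not} in general in the image of $\varphi^{-n}$ on $D^{]0,r_n]}$ (in the non--de Rham case they do not even come from anything over $\mathcal{R}$). More basically, the mirabolic model of Proposition~\ref{mirab} produces lifts of $v$ in $\tilde D\subset D\boxtimes\p1$, not in $D_{\rm rig}\boxtimes\p1$, and the inclusion $\tilde D\hookrightarrow D\boxtimes\p1$ does not factor through $D_{\rm rig}\boxtimes\p1$. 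So you cannot deduce $v\in\Pi^{\rm an}$ by exhibiting a lift in $D_{\rm rig}\boxtimes\p1$ this way, and the later sentence ``choose a lift $\hat w=i_{\zp}(y)$ with $y\in D_{\rm rig}$ supported on $\zpet$, coming from the mirabolic identification'' has the same problem.

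What actually has to happen in the base case is a comparison of the $T$-residue pairing on $D$ (not $D_{\rm rig}$) with the $t$-residue pairing on $\tilde D_{\rm dif}$, using the lift of $w$ in $\tilde D$ and the fact that ${\rm Res}_{\zp}(z')\in D^{]0,r_{m(D)}]}$ for $z'\in(\check\Pi^{\rm an})^*$. This comparison is precisely the content of \cite[\S VI.1]{Cbigone} that the paper invokes; it also yields $\Pi_c^{P-{\rm alg}}\subset\Pi^{\rm an}$ as a by-product, rather than via an explicit lift to $D_{\rm rig}\boxtimes\p1$. Your last paragraph correctly flags this residue comparison as the obstacle, but the mechanism you propose for getting there (lifting into $D_{\rm rig}$) is not the right one.
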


\begin{proof}
 C'est le contenu de la proposition VI.5.12 de~\cite{Cbigone}, dont
 l'ingr\'{e}dient de base est la section VI.1 de~\cite{Cbigone} (elle est donc ind\'{e}pendante
 du reste du chapitre VI).
\end{proof}

\section{Vecteurs localement alg\'{e}briques et repr\'{e}sentations de de Rham}\label{deRham}

  Le but de ce chapitre est de d\'{e}montrer le th\'{e}or\`{e}me \ref{main2}. L'ingr\'{e}dient principal est l'\'{e}tude de $\Pi_c^{P-{\rm alg}}$, \'{e}tude
qui se fait d'une part gr\^{a}ce \`{a} la th\'{e}orie de Hodge $p$-adique (proposition
\ref{picpgros}), de l'autre en reliant le mod\`{e}le de Kirillov et la dualit\'{e} entre
 $\Pi^{\rm an}$ et $(\Pi^{\rm an})^*$ (proposition \ref{deep}). Une fois ces r\'{e}sultats
 \'{e}tablis, le th\'{e}or\`{e}me \ref{main2} est une cons\'{e}quence plus ou moins formelle
 du th\'{e}or\`{e}me \ref{main1}. Dans ce chapitre $V$ est une $L$-repr\'{e}sentation
 absolument irr\'{e}ductible, de dimension $2$ de ${\rm Gal}(\overline{\qp}/\qp)$,
 $D$ et $D_{\rm rig}$ sont les $(\varphi,\Gamma)$-modules attach\'{e}s \`{a} $V$
 et $\Pi$ est la ${\rm GL}_2(\qp)$-repr\'{e}sentation de Banach attach\'{e}e \`{a}
 $V$ (th\'{e}or\`{e}me \ref{correspondance}).

\subsection{Sorites sur les vecteurs localement alg\'{e}briques}

Soit $G$ un $\qp$-groupe alg\'{e}brique r\'{e}ductif, que l'on identifie
   \`{a} ses $\qp$-points et soit $\pi$ une $L$-repr\'{e}sentation
   localement analytique de $G$ (au sens de~\cite{STJAMS}).
    Soit $\mathfrak{g}={\rm Lie}(G)$ et ${\rm Rep}^{{\rm alg}}_L(G)$ la cat\'{e}gorie des $L$-repr\'{e}sentations du groupe alg\'{e}brique $G$.

 \begin{definition}
\label{localg}
 a) Si $W\in {\rm Rep}^{{\rm alg}}_L(G)$, soit $\pi_{W-{\rm lalg}}$ l'espace des vecteurs
 $v\in\pi$ qui sont dans l'image d'un morphisme $H$-\'{e}quivariant
 $f: W^n\to \pi$, pour un sous-groupe ouvert $H$ de $G$ et un entier $n\geq 1$.
 C'est un sous-espace de $\pi$ stable par $G$.

 b) On dit que $v\in \pi$ est localement alg\'{e}brique et on \'{e}crit
 $v\in \pi^{{\rm alg}}$ s'il existe $W\in {\rm Rep}^{{\rm alg}}_L(G)$ telle que
 $v\in \pi_{W-{\rm lalg}}$. On dit que $\pi$ est localement alg\'{e}brique
 si $\pi^{{\rm alg}}=\pi$.

 \end{definition}

  Cette d\'{e}finition, tir\'{e}e de~\cite{Emlocan} est \'{e}quivalente aux celles
  utilis\'{e}es par Schneider-Teitelbaum~\cite{STUfinite} ou Colmez~\cite{Cbigone}, gr\^{a}ce \`{a} la proposition
  4.2.8 de~\cite{Emlocan}. Soit ${\rm Hom}(W,\pi)^\mathfrak{g}$ l'espace des morphismes
   $\mathfrak{g}$-\'{e}quivariants de $W$ dans $\pi$.

   \begin{proposition}
\label{sorites}
   a) Soit $W\in {\rm Rep}^{{\rm alg}}_{L}(G)$. Le morphisme
   naturel $${\rm Hom}(W,\pi)^{\mathfrak{g}}\otimes_{L} W\to \pi_{W-{\rm lalg}}$$ est un isomorphisme topologique
   et $\pi_{W-l{\rm alg}}$ est un sous-espace ferm\'{e} de $\pi$.

   b) Soit $\hat{G}$ un syst\`{e}me de repr\'{e}sentants des objets irr\'{e}ductibles de
   ${\rm Rep}^{{\rm alg}}_L(G)$. L'application naturelle
   $$\oplus_{W\in \hat{G}} \pi_{W-{\rm lalg}}\to \pi^{{\rm alg}}$$ est un isomorphisme.
   \end{proposition}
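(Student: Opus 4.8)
The plan is to study the evaluation map
$$\mathrm{ev}_W\colon \mathrm{Hom}(W,\pi)^{\mathfrak g}\otimes_L W\longrightarrow\pi,\qquad \phi\otimes w\longmapsto\phi(w),$$
and to prove that it is a topological isomorphism onto the closed subspace $\pi_{W\text{-}\mathrm{lalg}}$. The technical input I would isolate first is the comparison between infinitesimal and local equivariance: since $W$ is algebraic and finite-dimensional, for $X\in\mathfrak g$ small enough the operator $\exp(X)$ acts on $W$ by the convergent series $\sum_n\frac1{n!}X^n$, and because $\pi$ is locally analytic the same series computes the action of $\exp(X)$ on $\pi$. Passing a convergent series through a continuous $\mathfrak g$-equivariant map $f\colon W\to\pi$ then shows $f(\exp(X)w)=\exp(X)f(w)$, so that $f$ is automatically $H$-equivariant for the open subgroup $H$ generated by these $\exp(X)$. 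Conversely $H$-equivariant maps are $\mathfrak g$-equivariant by differentiation. This identifies $\mathrm{Hom}(W,\pi)^{\mathfrak g}$ with $\bigcup_H\mathrm{Hom}_H(W,\pi)$ and ties the source of $\mathrm{ev}_W$ directly to Definition~\ref{localg}.

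Granting this, part a) splits into three points, and I would use throughout that $W$ is (absolutely) irreducible, so $\mathrm{End}_{\mathfrak g}(W)=L$ by Schur's lemma. Surjectivity onto $\pi_{W\text{-}\mathrm{lalg}}$ is immediate: an element in the image of an $H$-equivariant $f\colon W^n\to\pi$ is $\sum_i\phi_i(w_i)=\mathrm{ev}_W(\sum_i\phi_i\otimes w_i)$, where the $\phi_i=f\circ\iota_i$ are now $\mathfrak g$-equivariant. For injectivity, given linearly independent $\phi_1,\dots,\phi_m\in\mathrm{Hom}(W,\pi)^{\mathfrak g}$, the kernel of $(w_i)\mapsto\sum_i\phi_i(w_i)$ is a $\mathfrak g$-submodule of $W\otimes_L L^m$; by irreducibility such a submodule is $W\otimes_L V'$ with $V'=\{v:\sum_iv_i\phi_i=0\}$, which is $0$, so $\mathrm{ev}_W$ is injective. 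Finally, the source is topologically $(\mathrm{Hom}(W,\pi)^{\mathfrak g})^{\dim W}$, the map is continuous and bijective onto its image, and the open mapping theorem upgrades it to a topological isomorphism.

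The closedness of $\pi_{W\text{-}\mathrm{lalg}}$ I would deduce from an intrinsic characterization. Let $I_W=\mathrm{Ann}_{U(\mathfrak g)}(W)$; since $W$ is finite-dimensional and irreducible, $U(\mathfrak g)/I_W\cong\mathrm{End}_L(W)$ by Jacobson density, so a vector $v$ is killed by $I_W$ exactly when $U(\mathfrak g)\cdot v$ is finite-dimensional and $W$-isotypic, i.e.\ exactly when $v\in\pi_{W\text{-}\mathrm{lalg}}$ (the upgrade from $\mathfrak g$-isotypic to locally algebraic being the comparison of the first paragraph). Thus $\pi_{W\text{-}\mathrm{lalg}}=\bigcap_{X\in I_W}\ker(X\colon\pi\to\pi)$ is an intersection of kernels of continuous operators, hence closed.

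Part b) is then formal. Surjectivity is the definition $\pi^{\mathrm{alg}}=\bigcup_W\pi_{W\text{-}\mathrm{lalg}}$, together with the semisimplicity of $\mathrm{Rep}^{\mathrm{alg}}_L(G)$ in characteristic zero, which writes every $W$ as a finite sum of irreducibles and hence every $\pi_{W\text{-}\mathrm{lalg}}$ as a sum of $\pi_{W_i\text{-}\mathrm{lalg}}$. For directness of the sum I would use $\mathrm{Hom}_{\mathfrak g}(W,W')=0$ for non-isomorphic irreducibles: a nonzero vector in $\pi_{W\text{-}\mathrm{lalg}}\cap\pi_{W'\text{-}\mathrm{lalg}}$ would generate a nonzero $\mathfrak g$-module that is at once $W$- and $W'$-isotypic, which is impossible. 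I expect the only genuinely delicate step to be the infinitesimal-to-local comparison of the first paragraph, where both the algebraicity of $W$ and the local analyticity of $\pi$ are essential; everything downstream is Schur's lemma, semisimplicity, and soft functional analysis.
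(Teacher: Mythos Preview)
The paper does not give its own proof of this proposition: it simply refers to section~4.2 of Emerton's \emph{Locally analytic vectors} \cite{Emlocan}. Your sketch is essentially a correct reconstruction of the argument one finds there --- the identification of $\mathrm{Hom}(W,\pi)^{\mathfrak g}$ with $\bigcup_H\mathrm{Hom}_H(W,\pi)$ via the exponential, the use of Schur's lemma and isotypic decomposition, and the characterization of $\pi_{W\text{-}\mathrm{lalg}}$ as the vanishing locus of $\mathrm{Ann}_{U(\mathfrak g)}(W)$ --- so there is no substantive disagreement to report.

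Two small points worth flagging. First, part a) as stated in the paper is for an arbitrary $W\in\mathrm{Rep}^{\mathrm{alg}}_L(G)$, while your argument explicitly assumes $W$ absolutely irreducible (both for Schur's lemma in the injectivity step and for the Jacobson density $U(\mathfrak g)/I_W\cong\mathrm{End}_L(W)$ in the closedness step). In Emerton's treatment the isomorphism of a) is indeed proved for irreducible $W$ (prop.~4.2.4 of \cite{Emlocan}), and the general case follows from semisimplicity; you should either restrict the statement or add a sentence reducing to the irreducible case. If $W$ is irreducible but not absolutely irreducible, $\mathrm{End}_{\mathfrak g}(W)$ is only a division algebra over $L$, and the structure of submodules of $W^m$ and of $U(\mathfrak g)/I_W$ needs a word more of care, though the conclusion survives. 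Second, your appeal to the open mapping theorem is fine but deserves a clause saying why it applies: both $\mathrm{Hom}(W,\pi)^{\mathfrak g}\otimes W$ and the closed subspace $\pi_{W\text{-}\mathrm{lalg}}$ are of compact type (closed subspaces and finite products of such), and the open mapping theorem holds in that category.
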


   \begin{proof}
    Voir la section 4.2 de~\cite{Emlocan}.
   \end{proof}

    Le r\'{e}sultat suivant est d\^{u} \`{a} Colmez~\cite[prop. VI.5.1]{Cbigone}. Nous en donnons une d\'{e}monstration plus directe, bas\'{e}e sur le th\'{e}or\`{e}me \ref{carinf}. On note
    ${\rm Sym}^{k-1}(L^2)$ la puissance sym\'{e}trique $(k-1)$-i\`{e}me de la repr\'{e}sentation
    standard de $\rm{GL}_2(\qp)$ sur $L\oplus L$.

\begin{proposition}
\label{HTdif}
a) Si $\Pi^{{\rm alg}}\ne 0$, alors $V$ est
 Hodge-Tate \`{a} poids distincts.

b) Si les poids de Hodge-Tate de $V$ sont $0$ et
$k\in\mathbf{N}^*$ et si $\Pi(V)^{{\rm alg}}\ne 0$, alors
 $$\Pi^{{\rm alg}}={\rm Sym}^{k-1}(L^2)\otimes_{L}\Pi^{\rm lc}$$
 pour une repr\'{e}sentation lisse admissible $\Pi^{\rm lc}$.

\end{proposition}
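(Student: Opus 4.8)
The plan is to combine the sorites on locally algebraic vectors (Proposition \ref{sorites}) with the computation of the infinitesimal character (Theorem \ref{carinf}) and with the central character of $\Pi$. First I would recall that the irreducible objects of ${\rm Rep}_L^{\rm alg}({\rm GL}_2(\qp))$ are the representations $W_{r,m}={\rm Sym}^{r-1}(L^2)\otimes \det^m$ with $r\geq 1$ and $m\in\mathbf{Z}$, and that by Proposition \ref{sorites}a) the space $\Pi_{W-{\rm lalg}}$ is $G$-equivariantly isomorphic to ${\rm Hom}(W,\Pi)^{\mathfrak{g}}\otimes_L W$, with $\mathfrak{g}$ acting only through the factor $W$ and smoothly on the first factor.

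For part a), suppose $\Pi^{\rm alg}\ne 0$; then $\Pi_{W_{r,m}-{\rm lalg}}\ne 0$ for some $r\geq 1$ and $m\in\mathbf{Z}$. A direct computation on a highest weight vector, using $C=2u^+u^-+\tfrac12 h^2-h$ exactly as in the proof of Lemme \ref{Casimir}, shows that the Casimir $C$ acts on $W_{r,m}$ by the scalar $\tfrac{r^2-1}{2}$, a value independent of $m$ since $\det^m$ is trivial on $\mathfrak{sl_2}$. Because $\Pi_{W_{r,m}-{\rm lalg}}$ is a nonzero subspace of $\Pi^{\rm an}$ on which $\mathfrak{g}$ acts through $W_{r,m}$, Theorem \ref{carinf} forces $\tfrac{r^2-1}{2}=\tfrac{(a-b)^2-1}{2}$, hence $|a-b|=r\geq 1$; in particular $a\ne b$.

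To upgrade distinctness to integrality I would bring in the central character. The centre of ${\rm GL}_2(\qp)$ acts on $\Pi$, hence on $\Pi_{W_{r,m}-{\rm lalg}}$, through $\delta_D=\chi^{-1}\det V$, whose derivative at $1$ equals $a+b-1$ (this is the computation of $I_2$ in Proposition \ref{trivial}, with $a+b$ the sum of the generalized Hodge-Tate weights). On the other hand the centre acts on $W_{r,m}$ by $z\mapsto z^{r-1+2m}$ and, after differentiation, trivially on the smooth factor ${\rm Hom}(W_{r,m},\Pi)^{\mathfrak{g}}$, so that same derivative equals $r-1+2m$. Comparing yields $a+b=r+2m\in\mathbf{Z}$, and combined with $a-b=\pm r\in\mathbf{Z}$ this gives $a,b\in\mathbf{Z}$. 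Since moreover $a\ne b$, the Sen operator has two distinct integral eigenvalues, hence is semisimple with integer eigenvalues, so $V$ is Hodge-Tate with distinct weights.

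For part b) I would run the same two numerical constraints in reverse. By Proposition \ref{sorites}b) one has $\Pi^{\rm alg}=\bigoplus_{r,m}\Pi_{W_{r,m}-{\rm lalg}}$, and for a nonzero summand the Casimir constraint $|a-b|=r$ together with $\{a,b\}=\{0,k\}$ forces $r=k$, while the central character constraint $a+b=r+2m$ becomes $k=k+2m$, i.e. $m=0$. Thus only $W=W_{k,0}={\rm Sym}^{k-1}(L^2)$ contributes, and Proposition \ref{sorites}a) gives $\Pi^{\rm alg}={\rm Sym}^{k-1}(L^2)\otimes_L \Pi^{\rm lc}$ with $\Pi^{\rm lc}={\rm Hom}({\rm Sym}^{k-1}(L^2),\Pi)^{\mathfrak{g}}$; this $\Pi^{\rm lc}$ carries a smooth $G$-action since $\mathfrak{g}$ acts trivially on it, and its admissibility follows from that of $\Pi$ together with the finite-dimensionality of ${\rm Sym}^{k-1}(L^2)$. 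The only genuinely nontrivial inputs are Theorem \ref{carinf} and the sorites of Proposition \ref{sorites}; the point requiring the most care is the normalisation of the Hodge-Tate weights in the central-character computation, which is precisely what pins down integrality of $a,b$ in part a) rather than merely $a-b\in\mathbf{Z}$.
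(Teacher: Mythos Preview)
Your proposal is correct and follows essentially the same approach as the paper: compare the Casimir eigenvalue on an irreducible algebraic constituent $W_{r,m}$ with Theorem~\ref{carinf} to get $(a-b)^2=r^2$, compare central characters to get $a+b=r+2m$, and then invoke Proposition~\ref{sorites}. You are slightly more explicit than the paper in observing that distinct integral Sen eigenvalues force semisimplicity and hence the Hodge--Tate property, and in part b) you spell out the uniqueness of $(r,m)$ more carefully; conversely, the paper is more precise about the admissibility of $\Pi^{\rm lc}$, identifying it as a closed subspace of the admissible locally analytic representation ${\rm Hom}({\rm Sym}^{k-1}(L^2),\Pi^{\rm an})$ and citing the relevant results of Schneider--Teitelbaum, whereas your justification is a bit informal (and you should write $\Pi^{\rm an}$ rather than $\Pi$ in the Hom, since $\mathfrak{g}$ only acts on analytic vectors).
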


\begin{proof}

 Si $k\geq 1$ et $l$ sont des entiers, soit $W_{l,k}={\rm Sym}^{k-1} L^2\otimes
 \det^l$. Supposons que $W_{l,k}\otimes \Pi^{\rm lc}$
 est une sous-repr\'{e}sentation de $\Pi$, pour une repr\'{e}sentation lisse
 $\Pi^{\rm lc}$. Soient $a$ et $b$ les poids de Hodge-Tate de $V$.
 En consid\'{e}rant les caract\`{e}res centraux, on obtient $a+b=k+2l$.
 L'examen des caract\`{e}res infinit\'{e}simaux (en utilisant
 le th\'{e}or\`{e}me \ref{carinf}) fournit $k^2=(a-b)^2$. Comme
 $k$ est un entier strictement positif, cela montre que
 $a$ et $b$ sont des entiers distincts et que $k$ et
 $l$ sont uniquement d\'{e}t\'{e}rmin\'{e}s par $a$ et $b$.
 Ceci et la proposition \ref{sorites} permettent de conclure
 (l'admissibilit\'{e} de $\Pi^{\rm lc}$ suit de celle de $\Pi$, car $\Pi^{\rm lc}={\rm Hom}({\rm Sym}^{k-1}(L^2),\Pi^{\rm an})^{\mathfrak{g}}$ est un
 sous-espace ferm\'{e} de ${\rm Hom}({\rm Sym}^{k-1}(L^2),\Pi^{\rm an})$, qui est une repr\'{e}sentation localement analytique admissible puisque
 ${\rm Sym}^{k-1}(L^2)$ est de dimension finie; on conclut alors par la proposition 6.4 et le th\'{e}or\`{e}me 6.6 de \cite{STInv}).
\end{proof}

  \subsection{L'espace $\Pi^{P-{\rm alg}}_c$
  et vecteurs presque alg\'{e}briques}

La proposition
   suivante est due \`{a} Colmez~\cite[lemme VI.5.10]{Cbigone}.
   On en donne une autre d\'{e}monstration. L'espace $\Pi_{c}^{P-\rm {alg}}$ a \'{e}t\'{e} d\'{e}fini dans
   \ref{Picp}.

   \begin{proposition}
   \label{picpgros}
     L'application $v\to (\phi_v(p^i))_{i\in\mathbf{Z}}$ induit une bijection
     de $\Pi_c^{P-{\rm alg}}$ sur $\oplus_{i\in\mathbf{Z}} X_{\infty}^-$
     \rm (voir la proposition \ref{Kir}, 3) pour $X_{\infty}^-$).
   \end{proposition}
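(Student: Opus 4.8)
The plan is to extract everything from the Kirillov function $\phi_v$, using only two structural inputs from Proposition~\ref{Kir}: its $\Gamma$-equivariance and the description of $P$-algebraic vectors. For injectivity, note that $\phi_v(ux)=\sigma_u(\phi_v(x))$ for $u\in\zpet$ and $x\in\qpet$ (Proposition~\ref{Kir}(2)); since every $x\in\qpet$ is uniquely of the form $up^i$ with $u\in\zpet$ and $i\in\mathbf{Z}$, the whole function $\phi_v$ is determined by the family $(\phi_v(p^i))_{i}$, and together with the injectivity of $v\mapsto\phi_v$ (Proposition~\ref{Kir}(1)) this shows that $v\mapsto(\phi_v(p^i))_i$ is injective. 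For the target, if $v\in\Pi_c^{P-{\rm alg}}$ then ${\rm Im}(\phi_v)\subset X_{\infty}^-$ (Proposition~\ref{Kir}(3)); compact support of $\phi_v$ in $\qp$ (Proposition~\ref{Kir}(1)) forces $\phi_v(p^i)=0$ for $i\ll0$, while the defining condition of $\Pi_c^{P-{\rm alg}}$ (Definition~\ref{Picp}) forces it for $i\gg0$. Hence only finitely many terms are nonzero and the map takes values in $\oplus_{i\in\mathbf{Z}}X_{\infty}^-$.

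The crux is surjectivity, which I would reduce to a single index. The matrix $\big(\begin{smallmatrix}p&0\\0&1\end{smallmatrix}\big)$ preserves $\Pi_c^{P-{\rm alg}}$ and, by Proposition~\ref{Kir}(2), acts by $\phi_{gv}(x)=\phi_v(px)$, hence realizes the invertible shift $(\phi_v(p^i))_i\mapsto(\phi_v(p^{i+1}))_i$. As the image of our map is an $L$-subspace stable under this shift, it is enough to hit every sequence concentrated in degree $0$: for each $y\in X_{\infty}^-$ I must produce $v\in\Pi_c^{P-{\rm alg}}$ with $\phi_v$ supported in $\zpet$ and $\phi_v(1)=y$. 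To construct it, view $y$ inside $N_{{\rm dif},n}/D_{{\rm dif},n}^+\subset t^{-k}\tilde{D}_{\rm dif}^+/\tilde{D}_{\rm dif}^+$ and lift it to $\hat y\in N_{{\rm dif},n}$. Since $\theta$ induces a surjection $\mathbf{\tilde{B}}^+\to\mathbf{B}_{\rm dR}^+/t^k\mathbf{B}_{\rm dR}^+$ and $\varphi^n(T)$ is $t$ times a unit of $\mathbf{B}_{\rm dR}^+$, the natural map $\frac{1}{\varphi^n(T)^k}\tilde{D}^+\to t^{-k}\tilde{D}_{\rm dif}^+/\tilde{D}_{\rm dif}^+$ is onto; pick $\tilde z$ mapping to the class of $\hat y$. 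The criterion recalled before Proposition~\ref{Kir} shows that $v_0:=\tilde z \bmod \tilde{D}^+\in\tilde{D}/\tilde{D}^+\simeq\Pi$ (Proposition~\ref{mirab}) is $U$-algebraic with $\phi_{v_0}(1)=y$.

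It remains to truncate the support. The projector ${\rm Res}_{\zpet}={\rm id}-\varphi\psi$ descends to $\Pi=\tilde{D}/\tilde{D}^+$ and multiplies Kirillov functions by the characteristic function of $\zpet$, so $v:={\rm Res}_{\zpet}(v_0)$ satisfies $\phi_v=\mathbf{1}_{\zpet}\phi_{v_0}$. This $\phi_v$ is supported in $\zpet$, still has $\phi_v(1)=y$, and, as $X_{\infty}^-$ is $\Gamma$-stable, has image $\{\sigma_u(y):u\in\zpet\}\subset X_{\infty}^-$. By Proposition~\ref{Kir}(3) the vector $v$ is $P$-algebraic, and its support being compact in $\qpet$ it lies in $\Pi_c^{P-{\rm alg}}$; moreover $(\phi_v(p^i))_i$ equals $y$ in degree $0$ and $0$ elsewhere. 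Combined with the shift reduction, this proves surjectivity.

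The delicate points are both in the construction of $v_0$ and in the use of ${\rm Res}_{\zpet}$: one must verify that the chosen $\tilde z$ really defines a $U$-algebraic vector with $\phi_{v_0}(1)=y$ — which rests on the surjectivity of $\mathbf{\tilde{B}}^+\to\mathbf{B}_{\rm dR}^+/t^k$ and on the identity $\varphi^n(T)=t\cdot(\text{unit})$ in $\mathbf{B}_{\rm dR}^+$ — and that the sheaf-theoretic operator ${\rm Res}_{\zpet}$ preserves $\tilde{D}^+$ and truncates the Kirillov function exactly to $\zpet$. The remaining manipulations of $\phi_v$ are purely formal.
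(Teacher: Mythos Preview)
Your injectivity argument and the reduction of surjectivity to a single index are fine and coincide with the paper's. The surjectivity of $\tilde{D}^+\to\tilde{D}_{\rm dif}^+/t^k\tilde{D}_{\rm dif}^+$ that you invoke is exactly the content of the paper's auxiliary Lemma~\ref{useful}. The genuine gap is the truncation step.

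You claim that ${\rm Res}_{\zpet}={\rm id}-\varphi\psi$ ``descends to $\Pi=\tilde{D}/\tilde{D}^+$ and multiplies Kirillov functions by $\mathbf{1}_{\zpet}$'', but neither assertion is justified. On $\tilde{D}$ the Frobenius $\varphi$ is \emph{bijective}, so the only natural left inverse is $\varphi^{-1}$ and then $1-\varphi\psi$ is identically zero; there is no intrinsic projector onto a ``$\zpet$-part'' at the level of $\tilde{D}$. If instead you mean the sheaf-theoretic ${\rm Res}_{\zpet}$ on $D\boxtimes\p1$, you would need it to preserve the kernel $\check{\Pi}^*$ of $D\boxtimes\p1\to\Pi$, and then to check compatibility with the mirabolic model $\tilde{D}/\tilde{D}^+$; neither is obvious, and you give no argument. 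With your choice of denominator $\varphi^n(T)$ one has, in $\mathbf{B}_{\rm dR}^+$, that $\varphi^{m}(T)=e^{p^{m}t}-1$ vanishes to order exactly~$1$ for \emph{every} $m\geq 0$, so $\varphi^i(\tilde z)\notin\tilde{D}_{\rm dif}^+$ for all $i\geq -n$: your $\phi_{v_0}$ may be nonzero on all of $p^{-n}\zp\setminus\{0\}$, and without a working truncation operator you cannot conclude.

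The paper's fix is to replace $\varphi^n(T)$ by $\omega=T/\varphi^{-1}(T)$. The key extra feature of $\omega$ is that $\theta(\varphi^i(\omega))\ne 0$ for every $i\ne 0$, so $\varphi^i(\omega)$ is a unit in $\mathbf{B}_{\rm dR}^+$ whenever $i\ne 0$; hence for $\tilde z=\omega^{-k}\tilde y$ with $\tilde y\in\tilde{D}^+$ one gets $\varphi^i(\tilde z)\in\tilde{D}_{\rm dif}^+$ for all $i\ne 0$ automatically, i.e.\ ${\rm supp}(\phi_v)\subset\zpet$ without any truncation. The condition $\phi_v(1)=y$ then becomes $\tilde y\equiv\omega^k\hat y\pmod{\omega^k\tilde{D}_{\rm dif}^+}$, which is solved by the same density statement you already used. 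Finally the paper verifies $v\in\Pi^{P-{\rm alg}}$ by checking $\lambda_n v=0$ directly rather than by appealing to $\Gamma$-stability of $X_\infty^-$. If you swap $\varphi^n(T)$ for $\omega$ and drop the ${\rm Res}_{\zpet}$ step, your argument becomes essentially the paper's.
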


\begin{proof}
  L'image est bien contenue dans $\oplus_{i\in\mathbf{Z}} X_{\infty}^-$,
  car ${\rm Im}(\phi_v)\subset X_{\infty}^-$  (prop.\ref{Kir}) et $\phi_v(p^i)=0$ pour
  $|i|>>0$ (par d\'{e}finition). Si $\phi_v(p^i)=0$ pour tout $i$, on a aussi
$\phi_v(x)=0$ pour tout $x$ d'apr\`{e}s le 2) de la prop. \ref{Kir} et
l'injectivit\'{e} de $v\to\phi_v$ montre que $v=0$. La partie d\'{e}licate est la surjectivit\'{e}.
Pour cela, notons d'abord qu'il suffit de montrer que pour tout
  $x\in X_{\infty}^-$, on peut trouver $v\in \Pi_c^{P-{\rm alg}}$ tel
  que $\phi_v(p^i)=1_{i=0}x$.  En effet, si
  ceci est vrai et si $(x_i)_i$ est une suite presque nulle dans
  $X_{\infty}^-$, il existe pour chaque $l\in\Z$ un $v_l'\in\Pi_{c}^{P-{\rm alg}}$ tel que
  $\phi_{v_l'}(p^i)=1_{i=0}x_{l}$. L'injectivit\'{e} de $v\to \phi_v$ et le
  fait que $(x_l)_l$ est une suite presque nulle
  montrent que $(v_l')_l$ est une suite presque nulle.  On peut donc d\'{e}finir
  $$v=\sum_{l\in\Z} \left(\begin{smallmatrix} p^{-l} & 0 \\0 & 1\end{smallmatrix}\right)
  v_l'$$ et on v\'{e}rifie facilement que $\phi_v(p^i)=x_i$.

   Soit donc $x\in X_{\infty}^-$ et soient $n\geq 1$, $\hat{x}\in N_{{\rm dif},n}$
   dont l'image dans $X_{\infty}^-$ est $x$. Soit $\omega=\frac{T}{\varphi^{-1}(T)}\in
   \mathbf{\tilde{A}}^+$. On va chercher $v$ de la forme $v=\tilde{z}\pmod {\tilde{D}^+}\in \Pi=\tilde{D}/\tilde{D}^+$
   avec $\tilde{z}=\omega^{-k} \tilde{y}$ et  $ \tilde{y}\in \tilde{D}^+$.  Comme
   $\theta(\varphi^i(\omega))\ne 0$ pour $i\ne 0$,
   pour un tel $\tilde{z}$ on a forc\'{e}ment $\varphi^i(\tilde{z})\in \tilde{D}_{\rm dif}^+$
   pour tout $i\ne 0$. Ceci combin\'{e} au fait que $\phi_v(ax)=\sigma_a(\phi_v(x))$
   pour $a\in\zpet$ montre que ${\rm supp}({\phi_v})\subset \zpet$.
   La condition $\phi_v(1)=x$ \'{e}quivaut \`{a} $\tilde{y}-\omega^k \hat{x}\in \omega^k \tilde{D}_{\rm dif}^+$.
   Comme $\omega^k \mathbf{B}_{\rm dR}^+= t^k\mathbf{B}_{\rm dR}^+$ et $t^kN_{{\rm dif},n}\subset D_{{\rm dif},n}^+\subset
   \tilde{D}_{\rm dif}^+$, l'\'{e}l\'{e}ment $u=\omega^k \hat{x}$ est dans $\tilde{D}^+_{\rm dif}$.
   On peut donc conclure quant \`{a} l'existence de $\tilde{y}$ gr\^{a}ce au lemme suivant:

    \begin{lemma}
   \label{useful}
    Pour tous $N\geq 1$ et $u\in \tilde{D}_{\rm dif}^+$, il existe $\tilde{y}\in\tilde{D}^+$
    tel que $\tilde{y}-u\in\omega^N \tilde{D}^{+}_{\rm dif}$.
   \end{lemma}

   \begin{proof} On d\'{e}montre le r\'{e}sultat par r\'{e}currence sur $N$.
   D'apr\`{e}s~\cite[lemme V.1.7]{Cbigone}, l'application $\theta: \tilde{D}^+\to (\mathbf{C}_p\otimes_{\qp} V)^H $ est
    surjective. Donc pour tout $u\in \tilde{D}_{\rm dif}^+$ il existe $\tilde{y}\in\tilde{D}^+$ tel que
   $\theta(\tilde{y})=\theta(u)$, i.e. $\tilde{y}-u\in \omega\cdot \tilde{D}^{+}_{\rm dif}$. Cela d\'{e}montre le cas $N=1$. Supposons le r\'{e}sultat
   vrai pour $N$ et soit $u\in \tilde{D}_{\rm dif}^+$. Il existe $\tilde{y}\in\tilde{D}^+$ et $u_1\in \tilde{D}_{\rm dif}^+$ tel que
   $\tilde{y}-u=\omega^N\cdot u_1$. Soit $\tilde{y_1}\in\tilde{D}^+$ tel que $\tilde{y_1}-u_1\in \omega\cdot \tilde{D}_{\rm dif}^+$.
   Alors $(\tilde{y}-\omega^N\cdot \tilde{y_1})-u\in \omega^{N+1}\cdot \tilde{D}_{\rm dif}^+$.

   \end{proof}

   Pour finir la preuve, il reste \`{a} v\'{e}rifier que $v\in\Pi_{c}^{P-{\rm alg}}$. Par construction,
   $T^k\tilde{z}\in \tilde{D}^+$, donc $v$ est tu\'{e} par $\left( \left(\begin{smallmatrix} 1 & 1 \\0 & 1\end{smallmatrix}\right)-1\right)^k$,
   donc $v\in\Pi^{U-{\rm alg}}$. De plus, comme $$\prod_{j=0}^{k-1} (\sigma_{1+p^n}-(1+p^n)^i)N_{{\rm dif},n}\subset
   D_{{\rm dif},n}^+$$ (cela est imm\'{e}diat, voir par exemple~\cite[lemme VI.4.1]{Cbigone}) et comme
    ${\rm Im}(\phi_v)\subset N_{{\rm dif},n}/D_{{\rm dif},n}^+$, on a $\phi_{\lambda_n v}=0$
    (cf. d\'{e}finition \ref{UPalg} pour $\lambda_n$) et donc $\lambda_n v=0$ et $v\in \Pi^{P-{\rm alg}}$.
     Par construction, ${\rm Im}(\phi_v)$ est compact
   dans $\qpet$, d'o\`{u} le r\'{e}sultat.

\end{proof}

\subsection{Vecteurs presque alg\'{e}briques et repr\'{e}sentations de de Rham}

  On d\'{e}montre dans cette partie le th\'{e}or\`{e}me \ref{main2}. Cela utilise essentiellement tous les r\'{e}sultats
  des chapitres pr\'{e}c\'{e}dents. On suppose que $V$ est \`{a} poids de Hodge-Tate
  $0$ et $k\in\mathbf{N}^*$ (d'apr\`{e}s la proposition \ref{HTdif}, \`{a} un twist pr\`{e}s ces hypoth\`{e}ses sont n\'{e}cessaires si on esp\`{e}re trouver des vecteurs alg\'{e}briques).
 Soit $\Pi_{c}^{P-\rm alg}$ l'espace d\'{e}fini dans
   \ref{Picp}. On commence par d\'{e}montrer la caract\'{e}risation suivante des
  repr\'{e}sentations de de Rham.

  \begin{theorem}\label{crucialalg}
   $V$ est de de Rham si et seulement si $(u^-)^k$ tue $\Pi_{c}^{P-\rm alg}$.
  \end{theorem}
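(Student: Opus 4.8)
The plan is to convert the condition $(u^-)^k \Pi_c^{P-{\rm alg}}=0$ into the differential criterion of Lemma~\ref{diff}, by feeding the explicit formula for $u^-$ from Theorem~\ref{Casimirinf} into the duality of Proposition~\ref{deep}. Since $V$ has Hodge-Tate weights $0$ and $k$, we have $P_{\rm Sen,V}(X)=X(X-k)$, so Theorem~\ref{Casimirinf} reads $u^-(z)=-t^{-1}\nabla(\nabla-k)(z)$ on $D_{\rm rig}$. The first step is purely algebraic: from $\nabla(tz)=t(\nabla+1)(z)$, equivalently $\nabla\circ t^{-1}=t^{-1}\circ(\nabla-1)$, one moves all powers of $t^{-1}$ to the left and obtains the operator identity $(u^-)^k=(-1)^k t^{-k}\nabla_{2k}$ on $D_{\rm rig}$, because $\prod_{i=0}^{k-1}(\nabla-i)(\nabla-i-k)=\nabla(\nabla-1)\cdots(\nabla-2k+1)=\nabla_{2k}$.

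Next I would localize this identity through the maps $i_{l,n}$. The key observation is that $u^-$ lies in the Lie algebra of $\left(\begin{smallmatrix}1&0\\p\zp&1\end{smallmatrix}\right)$, which stabilizes $\zp\subset\p1(\qp)$ (for $s\in p\zp$ and $x\in\zp$ one has $x/(sx+1)\in\zp$); hence by Lemma~\ref{distrlocal} the projector ${\rm Res}_{\zp}$ commutes with $u^-$. Combining this with the conjugation relation $\left(\begin{smallmatrix}p^{n-l}&0\\0&1\end{smallmatrix}\right)(u^-)^k=p^{-(n-l)k}(u^-)^k\left(\begin{smallmatrix}p^{n-l}&0\\0&1\end{smallmatrix}\right)$, with $\varphi^{-n}\circ\nabla=\nabla\circ\varphi^{-n}$ and $\varphi^{-n}(t)=p^{-n}t$, and with the first step, I get $i_{l,n}((u^-)^k z)=(-1)^k p^{lk}\,t^{-k}\nabla_{2k}(i_{l,n}(z))$ for $z\in(\check\Pi^{\rm an})^*$. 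Therefore $i_l^+((u^-)^k z)$ vanishes in $\varprojlim D_{{\rm dif},n}^+/t^kN_{{\rm dif},n}$ if and only if $\nabla_{2k}(i_{l,n}(z))\in t^{2k}N_{{\rm dif},n}$ for all $n$, that is, if and only if $i_{l,n}(z)\in X_n$ in the notation of Lemma~\ref{diff}.

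Now I would run the duality. Since $(\check\Pi^{\rm an})^*$ is its own orthogonal for the perfect pairing $[\,,\,]_{\p1}$ (Proposition~\ref{acc}), the relation $(u^-)^k v=0$ is equivalent to $[z,(u^-)^k v]_{\p1}=0$ for all $z\in(\check\Pi^{\rm an})^*$; moving $u^-$ across by the adjunction $[(u^-)^k x,y]_{\p1}=(-1)^k[x,(u^-)^k y]_{\p1}$ and applying Proposition~\ref{deep} to $(u^-)^k z\in(\check\Pi^{\rm an})^*$ rewrites this as $\sum_l \delta_D(p^l)[i_l^+((u^-)^k z),\phi_v(p^{-l})]_{\rm dif}=0$. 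By Proposition~\ref{picpgros} the tuples $(\phi_v(p^{-l}))_l$ range over all of $\oplus_l X_\infty^-$, so letting $v$ vary and using the non-degeneracy of $[\,,\,]_{\rm dif}$ (Lemma~\ref{acclimite}), the vanishing of $(u^-)^k$ on $\Pi_c^{P-{\rm alg}}$ becomes exactly $i_l^+((u^-)^k z)=0$ for all $z$ and $l$, i.e. $i_{l,n}(z)\in X_n$ for all $z,l,n$.

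It remains to see that this forces $X_n=D_{{\rm dif},n}^+$, which by Lemma~\ref{diff} is equivalent to $V$ being de Rham. For the implication ``de Rham $\Rightarrow$ kills'' nothing more is needed, since then $X_n=D_{{\rm dif},n}^+$ and $i_{l,n}(z)\in X_n$ trivially. For the converse I would use that $R={\rm Res}_{\zp}((\check\Pi^{\rm an})^*)$ is stable under multiplication by $t$ (because $u^+$ acts by $t$ and ${\rm Res}_{\zp}$ commutes with $u^+$), under $\Gamma$, and under $\psi$ (because ${\rm Res}_{\zp}\left(\left(\begin{smallmatrix}p^{-1}&0\\0&1\end{smallmatrix}\right)x\right)=\psi({\rm Res}_{\zp}(x))$): being a nonzero such submodule of the $(\varphi,\Gamma)$-module $D_{\rm rig}$ of the irreducible $V$, its localizations $\varphi^{-n}(R)=\{i_{n,n}(z)\}$ generate $D_{{\rm dif},n}^+$ over $L_n[[t]]$. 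Since $X_n$ is an $L_n[[t]]$-submodule of $D_{{\rm dif},n}^+$ containing all $i_{n,n}(z)$, it must equal $D_{{\rm dif},n}^+$, whence $V$ is de Rham. I expect the main obstacle to be precisely this last generation statement: the passage to $\nabla_{2k}$ is formal, but controlling the image of ${\rm Res}_{\zp}$ on $(\check\Pi^{\rm an})^*$ (equivalently, knowing it is a large enough lattice) is the one point where the finer structure of Colmez's correspondence must be invoked. A secondary subtlety is that $(u^-)^k v$ leaves $\Pi_c^{P-{\rm alg}}$, which is why the whole argument must pass through the pairing rather than through the Kirillov model of $(u^-)^k v$ directly.
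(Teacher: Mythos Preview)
Your approach is essentially identical to the paper's: the same duality reduction via Propositions~\ref{acc}, \ref{deep}, \ref{picpgros} and Lemma~\ref{acclimite}, the same operator identity $(u^-)^k=(-1)^k t^{-k}\nabla_{2k}$ on $D_{\rm rig}$ (stated in the paper as Lemma~\ref{long}), and the same reduction to the dichotomy of Lemma~\ref{diff}. Your explicit tracking of the commutation of $u^-$ with ${\rm Res}_{\zp}$ and with $\left(\begin{smallmatrix}p^{n-l}&0\\0&1\end{smallmatrix}\right)$ is exactly what the paper compresses into the one sentence ``par d\'{e}finition des $i_{l,n}$''.

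The only point you leave open---and correctly flag as the main obstacle---is the generation statement: that $\varphi^{-n}\bigl({\rm Res}_{\zp}((\check\Pi^{\rm an})^*)\bigr)$ generates $D_{{\rm dif},n}^+$ over $L_n[[t]]$. The paper does \emph{not} deduce this from the abstract $(\psi,\Gamma,t)$-stability you propose (that route is not obviously sufficient), but from two concrete inputs. First, the proof of \cite[Prop.~V.2.1]{Cbigone} shows that every $z_1\in D^{\psi=1}$ is of the form ${\rm Res}_{\zp}(z)$ for some $z\in(\check\Pi^{\rm an})^*$, so $D^{\psi=1}\subset{\rm Res}_{\zp}((\check\Pi^{\rm an})^*)$. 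Second, \cite[Cor.~I.7.6]{CCthIwas} furnishes a basis $z_1,z_2$ of $D$ over $\mathcal{E}$ consisting of elements of $D^{\psi=1}$; these automatically form a basis of $D^\dagger$ over $\mathcal{E}^\dagger$, hence of $D^{]0,r_n]}$ over $\mathcal{E}^{]0,r_n]}$ for $n$ large, so $\varphi^{-n}(z_1),\varphi^{-n}(z_2)$ generate $D_{{\rm dif},n}^+$. Since both lie in $X_n$ and $X_n$ is an $L_n[[t]]$-module, one gets $D_{{\rm dif},n}^+\subset X_n$, contradicting part~(b) of Lemma~\ref{diff}.
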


  \begin{proof}
   Par dualit\'{e}, $(u^-)^k$ tue $\Pi_{c}^{P-\rm alg}$ si et seulement si
   $(u^-)^k(\check{\Pi}^{\rm an})^*$ est orthogonal \`{a} $\Pi_{c}^{P-\rm alg}$ pour
   l'accouplement $[\,\,,\,]_{\p1}$ (voir \ref{acp1}).
  La proposition \ref{deep} combin\'{e}e \`{a} la proposition
   \ref{picpgros} et au lemme \ref{acclimite} montre que ceci se passe si et seulement si $i_l^+((u^-)^kz)=0$ dans $\lim\limits_{{\longleftarrow}} D_{{\rm dif},n}^+/t^kN_{{\rm dif},n}$,
   pour tout $l\in\mathbf{Z}$ et pour tout $z\in (\check{\Pi}^{\rm an})^*$, ou encore
   $i_{l,n}((u^-)^kz)\in t^kN_{{\rm dif},n}$ pour tout
   $n\geq m(D)$, $l\in\mathbf{Z}$ et $z\in (\check{\Pi}^{\rm an})^*$. Par d\'{e}finition des $i_{l,n}$, cela arrive
   si et seulement si $\varphi^{-n}\left((u^{-})^k {\rm Res}_{\zp}(z) \right)\in  t^kN_{{\rm dif},n}$ pour tout $n\geq m(D)$ et tout
    $z\in (\check{\Pi}^{\rm an})^*$. Nous aurons besoin du r\'{e}sultat suivant:

    \begin{lemma}
  \label{long}
   Pour tout $z\in D_{\rm rig}$ et tout $j\geq 1$ on a
    $$(u^-)^j(z)=\frac{\nabla(\nabla-1)...(\nabla-j+1)(k-\nabla)(k+1-\nabla)...(k+j-1-\nabla)(z)}{t^j}.$$

  \end{lemma}

\begin{proof}
 Ceci d\'{e}coule par une r\'{e}currence imm\'{e}diate du th\'{e}or\`{e}me \ref{main1}, en utilisant la formule
 $$\nabla(k-\nabla)\left(\frac{x}{t^j}\right)=\frac{(k+j-\nabla)(\nabla-j)(x)}{t^j},$$
 qui se v\'{e}rifie par un calcul direct.
\end{proof}

  La conclusion du lemme \ref{long} (avec $j=k$) et du premier paragraphe est que $(u^{-})^k$ tue $\Pi_{c}^{P-\rm alg}$
  si et seulement si $\varphi^{-n}\left({\rm Res}_{\zp}(z)\right)\in X_n$ (voir le lemme \ref{diff} pour la d\'{e}finition de
   $X_n$) pour tout $z\in (\check{\Pi}^{\rm an})^*$ et tout $n\geq m(D)$. La partie a) du lemme \ref{diff} permet de conclure directement dans le cas de Rham.

 Supposons donc que $V$ n'est pas de de Rham et que $(u^{-})^k$ tue $\Pi_{c}^{P-\rm alg}$.
 Soit $z_1\in D^{\psi=1}$. La proposition V.2.1 de~\cite{Cbigone} (plut\^{o}t sa preuve) montre
 qu'il existe $z\in (\check{\Pi}^{\rm an})^*$ tel que ${\rm Res}_{\zp}(z)=z_1$. On a donc
 $\varphi^{-n}(z_1)\in X_n$. Mais $D$ a une base $z_1,z_2$ sur $\mathcal{E}$ form\'{e}e d'\'{e}l\'{e}ments de $D^{\psi=1}$ (voir~\cite[corollaire I.7.6]{CCthIwas}),
 et $z_1,z_2$ forment aussi une base de $D^{\dagger}$ sur $\mathcal{E}^{\dagger}$ (car $\mathcal{E}^{\dagger}$ est un corps),
 donc de $D_{\rm rig}$ sur $\mathcal{R}$. On en d\'{e}duit que pour tout\footnote{Cela peut demander d'augmenter $m(D)$.}
 $n\geq m(D)$, $z_1$ et $z_2$ forment une base de $D^{]0,r_n]}$ sur $\mathcal{E}^{]0,r_n]}$. Comme
 $\varphi^{-n}(z_1)$ et $\varphi^{-n}(z_2)\in X_n$ et comme $X_n$ est un $L_n[[t]]$-module, on obtient
 $\varphi^{-n}(D^{]0,r_n]})\subset X_n$ et donc $D_{{\rm dif},n}^+\subset X_n$. Mais cela contredit le point b) du lemme
 \ref{diff}. Cela permet de conclure.

  \end{proof}

   La preuve du th\'{e}or\`{e}me suivant (c'est le th\'{e}or\`{e}me \ref{main2} de l'introduction) est maintenant une formalit\'{e}.

   \begin{theorem}
    $V$ est de de Rham \`{a} poids de Hodge-Tate distincts si et seulement si
    $\Pi$ a des vecteurs localement alg\'{e}briques non nuls.
   \end{theorem}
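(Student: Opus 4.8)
The plan is to funnel both the de Rham condition and the existence of locally algebraic vectors into the single criterion of Theorem~\ref{crucialalg}, namely that $(u^-)^k$ annihilates $\Pi_c^{P-{\rm alg}}$, and then to bridge that criterion with the condition $\Pi^{{\rm alg}}\neq 0$ through the Kirillov description of $\Pi_c^{P-{\rm alg}}$ provided by Proposition~\ref{picpgros}. First I would normalize the situation. If $\Pi^{{\rm alg}}\neq 0$, Proposition~\ref{HTdif}(a) already forces $V$ to be Hodge--Tate with distinct (hence integral) weights; in the de Rham direction this is part of the hypothesis. After twisting $V$ by a suitable power of $\chi$ I may therefore assume the weights are $0$ and $k\in\mathbf{N}^*$. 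Both properties at stake are stable under such a twist: the de Rham property because $\chi^j$ is de Rham, and the space $\Pi^{{\rm alg}}$ because tensoring $\Pi$ by an algebraic character of $\det$ carries locally algebraic vectors to locally algebraic vectors. Everything then reduces to proving, for $V$ with weights $0,k$, the equivalence $V\text{ de Rham}\iff\Pi^{{\rm alg}}\neq 0$.

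The key bridge I would establish is the identity $\Pi^{{\rm alg}}\cap\Pi^{P-{\rm alg}}=\{v\in\Pi^{P-{\rm alg}}:(u^-)^kv=0\}$, together with its compactly supported variant. Indeed, a locally algebraic vector lies in $\mathrm{Sym}^{k-1}(L^2)\otimes\Pi^{{\rm lc}}$ by Proposition~\ref{HTdif}(b), on which $u^-$ acts only through the $\mathrm{Sym}^{k-1}(L^2)$ factor and so satisfies $(u^-)^k=0$. Conversely, if $v\in\Pi^{P-{\rm alg}}\subset\Pi^{{\rm an}}$ (cf. Proposition~\ref{deep}) satisfies $(u^-)^kv=0$, then $u^+$ and $u^-$ act nilpotently on $v$ (by $U$-algebraicity and the hypothesis) while $h$ acts with integral semisimple eigenvalues (by $P$-algebraicity), so the $U(\mathfrak{sl_2})$-module generated by $v$ is finite dimensional and algebraic; being moreover locally analytic, $v$ is locally algebraic, via the standard equivalence underlying Proposition~\ref{sorites}.

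With the bridge in hand the forward direction is immediate. If $V$ is de Rham, Theorem~\ref{crucialalg} gives that $(u^-)^k$ kills $\Pi_c^{P-{\rm alg}}$, and Proposition~\ref{picpgros} identifies $\Pi_c^{P-{\rm alg}}$ with $\bigoplus_{\Z}X_{\infty}^-$, which is nonzero since the summand $X_{\infty}^-=\varinjlim_n N_{{\rm dif},n}/D_{{\rm dif},n}^+$ is nonzero as soon as $k\geq 1$ (because $N_{{\rm dif},n}\supsetneq D_{{\rm dif},n}^+$). Hence any nonzero $v\in\Pi_c^{P-{\rm alg}}$ is $P$-algebraic and killed by $(u^-)^k$, so by the bridge it is locally algebraic, giving $\Pi^{{\rm alg}}\neq 0$.

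For the converse I would argue contrapositively through Theorem~\ref{crucialalg}: assuming $\Pi^{{\rm alg}}\neq 0$, I must show that $(u^-)^k$ kills \emph{all} of $\Pi_c^{P-{\rm alg}}$, equivalently (by the bridge) that $\Pi_c^{P-{\rm alg}}\subset\Pi^{{\rm alg}}$. Through the bijection of Proposition~\ref{picpgros} this amounts to showing that the compactly supported locally algebraic vectors already exhaust the target $\bigoplus_{\Z}X_{\infty}^-$. This is the step I expect to be the main obstacle, and the only point where genuine smooth representation theory enters rather than formal manipulation: using $\Pi^{{\rm alg}}=\mathrm{Sym}^{k-1}(L^2)\otimes\Pi^{{\rm lc}}$ with $\Pi^{{\rm lc}}$ a nonzero smooth admissible representation, I would deduce that the Kirillov functions arising from $\Pi^{{\rm alg}}$ fill $\bigoplus_{\Z}X_{\infty}^-$, from the fact that the Kirillov model of $\Pi^{{\rm lc}}$ contains every locally constant compactly supported function on $\qpet$ (the $P$-irreducibility of the mirabolic Kirillov model), tensored with $\mathrm{Sym}^{k-1}(L^2)$ to produce the finitely many $t$-powers spanning $X_{\infty}^-$. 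Granting this surjectivity, $\Pi_c^{P-{\rm alg}}=\Pi^{{\rm alg}}\cap\Pi_c^{P-{\rm alg}}$ is annihilated by $(u^-)^k$, so Theorem~\ref{crucialalg} forces $V$ to be de Rham, which closes the equivalence.
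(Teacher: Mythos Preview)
Your approach is correct and follows the same architecture as the paper: both directions funnel through Theorem~\ref{crucialalg}, and both invoke Proposition~\ref{picpgros} together with the inclusion $\Pi_c^{P-{\rm alg}}\subset\Pi^{{\rm alg}}$ coming from the Borel--irreducibility of ${\rm LC}_c(\qpet)$ (what the paper cites as \cite{Cbigone}, cor.~VI.5.9, and what you rederive via the smooth Kirillov model of $\Pi^{{\rm lc}}$). The packaging differs in two respects worth noting.

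For the de Rham $\Rightarrow\Pi^{{\rm alg}}\neq 0$ direction, you argue abstractly via your bridge that any $v\in\Pi_c^{P-{\rm alg}}$ killed by $(u^-)^k$ is locally algebraic, whereas the paper picks a single explicit highest weight vector $v$ (the one with $\phi_v(p^i)=1_{i=0}\,t^{k-1}e_2$), checks $u^+v=0$ and $hv=(k-1)v$ by hand in the Kirillov model, and then writes down the map ${\rm Sym}^{k-1}(L^2)\to\Pi^{{\rm an}}$ via $e_1^{k-j-1}e_2^j\mapsto\frac{(u^-)^jv}{(k-1)\cdots(k-j)}$. Your route is cleaner, but the sentence ``$u^+,u^-$ nilpotent, $h$ semisimple integral $\Rightarrow$ the cyclic $U(\mathfrak{sl_2})$-module is finite dimensional'' hides a small argument: one first reduces to an $h$-eigenvector (the roots of $\prod_{i}(h-(2i-k+1))$ being distinct), and then uses that on an $h$-weight string the Casimir forces $u^+u^-$ and $u^-u^+$ to act by scalars, so the cyclic module is spanned by $(u^+)^av$ and $(u^-)^bv$ with $a,b<k$. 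The paper's explicit construction is essentially this computation done for one vector.

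Two minor slips: you write $\Pi^{P-{\rm alg}}\subset\Pi^{{\rm an}}$ citing Proposition~\ref{deep}, but that proposition only gives $\Pi_c^{P-{\rm alg}}\subset\Pi^{{\rm an}}$; since you only ever use the compactly supported version this does no harm. And in the converse, your surjectivity argument presupposes that Colmez's Kirillov map $v\mapsto\phi_v$ restricted to $\Pi^{{\rm alg}}={\rm Sym}^{k-1}\otimes\Pi^{{\rm lc}}$ matches the classical smooth Kirillov model of $\Pi^{{\rm lc}}$ tensored with the basis $t^{-1}e_2,\dots,t^{-k}e_2$ of $X_\infty^-$; this compatibility is true but is exactly the content of \cite{Cbigone}, cor.~VI.5.9 that the paper invokes as a black box.
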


   \begin{proof}
   Quitte \`{a} faire une torsion par un caract\`{e}re localement alg\'{e}brique,
   on peut supposer qu'un des poids de Hodge-Tate de $V$ est
   $0$. La proposition \ref{HTdif} permet de supposer que l'autre poids de Hodge-Tate est
   $k$, un entier strictement positif.

     Supposons que $V$ n'est pas de de Rham, mais que $\Pi^{{\rm alg}}\ne 0$.
D'apr\`{e}s\footnote{C'est une cons\'{e}quence de l'irr\'{e}ductibilit\'{e} sous l'action du sous-groupe de Borel de l'espace des fonctions localement constantes \`{a} support compact dans
$\qpet$, voir le lemme 2.9.1 de \cite{JL}.} \cite{Cbigone}, cor. VI.5.9, on a $\Pi_c^{P-{\rm alg}}\subset \Pi^{{\rm alg}}$
   et d'apr\`{e}s la proposition \ref{HTdif}, on sait que
   $\Pi^{{\rm alg}}={\rm Sym}^{k-1}\otimes \Pi^{\rm lc}$
   pour une repr\'{e}sentation lisse $\Pi^{\rm lc}$.
   En particulier, $(u^-)^k$ tue $\Pi_c^{P-{\rm alg}}$, contradiction avec le th\'{e}or\`{e}me
   \ref{crucialalg}.

   Supposons que $V$ est de de Rham. Fixons $n>m(D)$ et soit $e_1,e_2$ comme dans la proposition \ref{basedif}
 (c'est une base de $D_{\rm dR}(V)$). D'apr\`{e}s la proposition \ref{picpgros}, il existe
 $v\in \Pi_c^{P-{\rm alg}}$ tel que $$\phi_v(p^i)=1_{i=0}t^{k-1}e_2\pmod {D_{{\rm dif},n}^+}.$$
$\phi_v$ est alors \`{a} support dans $\zpet$ (prop \ref{Kir}) et, si on pose $u=\left(\begin{smallmatrix} 1 & 1 \\0 & 1\end{smallmatrix}\right)$, on a (en utilisant l'\'{e}galit\'{e} $[(1+T)^x]=(1+T)^x=e^{tx}$ pour $x\in\zp$)
  $$\phi_{(u-1)v}(x)=([(1+T)^{x}]-1)\phi_v(x)=1_{x\in \zpet}(e^{tx}-1)\phi_v(x)
  =0\pmod {D_{{\rm dif},n}^+},$$ donc $uv=v$, i.e. $v$ est invariant par $\left(\begin{smallmatrix} 1 & \zp \\0 & 1\end{smallmatrix}\right)$.
Si $a_n^+=\left(\begin{smallmatrix} 1+p^n & 0 \\0 & 1\end{smallmatrix}\right)$, alors $a_n^+v=(1+p^n)^{k-1}v$, car pour tout $x\in \qpet$ on a
  $$\phi_{a_n^+v-(1+p^n)^{k-1}v}(x)=\phi_v((1+p^n)x)-(1+p^n)^{k-1}\phi_v(x)=$$
 $$ 1_{x\in \zpet}\sigma_x(\sigma_{1+p^n}-(1+p^n)^{k-1})(t^{k-1}e_2)=0\pmod {D_{{\rm dif},n}^+}.$$
  Enfin, notons que $v\ne 0$, car
 $\phi_v(1)=t^{k-1}e_2\ne 0\pmod {D_{{\rm dif},n}^+}$.

 D'apr\`{e}s la proposition \ref{deep}, on a $v\in \Pi^{\rm an}$ et le paragraphe pr\'{e}c\'{e}dent montre que $u^+(v)=0$ et $h(v)=(k-1)v$. On va montrer que $v$ est localement alg\'{e}brique. D'apr\`{e}s la proposition \ref{sorites}, il suffit de montrer la $\mathfrak{gl_2}$-\'{e}quivariance du morphisme $L$-lin\'{e}aire  $f:{\rm Sym}^{k-1}(L^2)\to \Pi^{\rm an}$, d\'{e}fini par $$f(e_1^{k-j-1}e_2^j)=\frac{(u^-)^j v}{(k-1)(k-2)...(k-j)}$$
  pour $0\leq j\leq k-1$.

 Si $x_j=e_1^{k-j-1}e_2^j$, un calcul imm\'{e}diat montre que $$h(x_j)=(k-2j-1)x_j,\quad
 u^+(x_j)=jx_{j-1},\quad u^-(x_j)=(k-1-j)x_{j+1},$$ avec, par convention, $x_{-1}=x_{k}=0$. Soit
 $y_j=\frac{(u^-)^j v}{(k-1)(k-2)...(k-j)}$. On veut montrer que les $y_j$ satisfont les m\^{e}mes
 relations. Mais les relations $$[u^+,u^-]=h, \quad [h,u^+]=2u^+, \quad [h,u^-]=-2u^-$$ et une r\'{e}currence
 imm\'{e}diate montrent que, dans $U(\mathfrak{gl_2})$,
 $$(u^-)^jh=(h+2j)(u^-)^j,\quad u^+(u^-)^j=(u^-)^ju^++j(u^-)^{j-1}(h-j+1).$$
 La premi\`{e}re et le fait que $hv=(k-1)v$ entra\^{i}nent
 $h(y_j)=(k-1-2j)y_j$. La seconde et la relation $u^+v=0$
 donnent $u^+(y_j)=jy_{j-1}$. Enfin, il est clair que
 $u^-(y_j)=(k-1-j)y_{j+1}$ \textbf{si} $j<k-1$ et tout le point
 est de le v\'{e}rifier pour $j=k-1$, i.e. que $(u^-)^kv=0$.
Mais cela d\'{e}coule du th\'{e}or\`{e}me \ref{crucialalg} et finit la preuve du th\'{e}or\`{e}me.

\end{proof}

\end{document}